\newcommand{\bb}[1]{\mathbb{#1}}
\newcommand{\N}{\mathbb{N}}	
\newcommand{\R}{\mathbb{R}}	
\newcommand{\dd}{\,\mathrm{d}}	
\newcommand{\de}{\partial}		
\newcommand{\grad}{\nabla}
\newcommand{\Lip}{\mathrm{Lip}}
\newcommand{\card}{\operatorname{Card}}
\newcommand{\HH}{\bb H}
\newcommand{\interior}{\operatorname{int}}
\newcommand{\diam}{\operatorname{diam}}
\newcommand{\sgn}{\operatorname{sgn}}
\numberwithin{equation}{section}
\theoremstyle{plain}
\newtheorem{proposition}[equation]{Proposition}
\newtheorem{theorem}[equation]{Theorem}
\newtheorem{lemma}[equation]{Lemma}
\newtheorem{corollary}[equation]{Corollary}
\theoremstyle{definition}
\newtheorem{definition}[equation]{Definition}
\newtheorem{example}[equation]{Example}
\theoremstyle{remark}
\newtheorem{remark}[equation]{Remark}
\begin{document}

\title[BCP the Heisenberg group revisited]{The Besicovitch covering property in the Heisenberg group revisited}

\author[Sebastiano Golo]{Sebastiano Golo}

\address[Golo]{School of Mathematics, University of Birmingham, United Kingdom, Orcid ID: https://orcid.org/0000-0002-3773-6471}

\email{sebastiano.golo@fastmail.fm}

\thanks{S.G.~has been supported by the European Unions Seventh Framework Programme, Marie Curie Actions-Initial Training Network, under grant agreement n. 607643, ``Metric Analysis For Emergent Technologies (MAnET)''}

\author{S\'everine Rigot}

\address[Rigot]{Universit\'e C\^ote d'Azur, CNRS, LJAD, France}

\email{rigot@unice.fr}

\thanks{S.R.~is partially supported by ANR grant ANR-15-CE40-0018.}

\begin{abstract}
The Besicovitch covering property (BCP) is known to be one of the fundamental tools in measure theory, and more generally, a useful property for numerous purposes in analysis and geometry. We prove both sufficient and necessary criteria for the validity of BCP in the first Heisenberg group equipped with a homogeneous distance. Beyond recovering all previously known results about the validity or non validity of BCP in this setting, we get simple descriptions of new large classes of homogeneous distances satisfying BCP. We also obtain a full characterization of rotationally invariant distances for which BCP holds in the first Heisenberg group under mild regularity assumptions about their unit sphere.
\end{abstract}

\subjclass[2010]{28C15, 
49Q15, 
43A80. 
}

\maketitle

\setcounter{tocdepth}{2}
\phantomsection
\tableofcontents



\section{Introduction}

The theory of differentiation of measures originates from works of Besicovitch in the Euclidean space (\cite{BesicovitchI45},~\cite{BesicovitchII46}). These pioneering works, as well as subsequent developments of the theory in the metric setting, rely as fundamental tools on suitable covering properties of the ambient space. Among such covering properties, the Besicovitch covering property (BCP), see Definition~\ref{def:bcp}, plays a central role for the validity of the differentiation theorem for every locally finite Borel regular measure over a separable metric space, see for instance~\cite{Preiss83} or the survey~\cite{Rigot_differentiation_measures}. We also stress that, beyond its role in the theory of differentiation of measures, BCP is a covering property that is geometric in nature and can be used as a useful tool to deduce global properties on the ambient space from local ones.

\medskip

We focus in the present paper on the first Heisenberg group $\HH$ with the aim of describing new homogeneous distances satisfying BCP. Our results come in the continuity of~\cite{LeDonne_Rigot_Heisenberg_BCP} where the existence of homogeneous distances for which BCP holds on $\HH$ was first proved. We prove in the current paper sufficient conditions, see Theorem~\ref{thm:intro-sufficient-cdt} and Section~\ref{sect:sufficient-cdt}, and necessary conditions, see Theorem~\ref{thm:intro-necessary-cdt} and Section~\ref{sect:necessary-conditions}, for the validity of BCP on the first Heisenberg group equipped with a homogeneous distance. Beyond recovering the class of homogeneous distances for which it was proved in~\cite{LeDonne_Rigot_Heisenberg_BCP} that BCP holds, we will get as a consequence of our sufficient conditions rather simple descriptions of new large classes of homogeneous distances on $\HH$ for which BCP holds, see Section~\ref{subsect:applications}. We stress that the proof of Theorem~\ref{thm:intro-sufficient-cdt} requires original arguments that are independent from those used in~\cite{LeDonne_Rigot_graded_groups_BCP} or~\cite{LeDonne_Rigot_Heisenberg_BCP}. As a consequence of Theorem~\ref{thm:intro-sufficient-cdt} and Theorem~\ref{thm:intro-necessary-cdt}, we will also get a simple characterization of rotationally invariant homogeneous distances on $\HH$ for which BCP holds, under mild additional assumptions about the regularity of their unit sphere, see Theorem~\ref{thm:intro-characterization-rot-inv-dist}. One can for instance easily recover from these results the fact, first proved in~\cite{MR2103544}, that the Carnot-Carath\'eodory distance on $\HH$ does not satisfy BCP. One also recovers the fact that the Kor\'anyi distance do not satisfy BCP, this was already noticed in~\cite{Koranyi-Reimann95} and~\cite{Sawyer_Wheeden}. More importantly, one gets explicit homogeneous distances that are as close as one wants to the Kor\'anyi distance and that do satisfy BCP, see~Theorem~\ref{thm:intro-application}.

\medskip

Before stating our main results, we recall the definition of the Besicovitch covering property in the metric setting. Here, balls in a metric space $(X,d)$ are assumed to be closed. Namely, a ball denotes a set of the form $B(p,r) := \{q\in X : d(p,q) \leq r\}$ for some $p\in X$ and some $0<r<\infty$.

\begin{definition}[Besicovitch family of balls] \label{def:Besicovitchfamily}
We say that a family $\mathcal{B}$ of balls in a metric space $(X,d)$ is a \textit{Besicovitch family of balls} if, first, for every ball $B\in \mathcal{B}$ with center $p_B$, we have $p_B\not \in B'$ for all $B'\in \mathcal{B}$, $B\not= B'$, and, second, $\bigcap_{B\in \mathcal{B}} B \not= \emptyset$. 
\end{definition}

\begin{definition}[Besicovitch covering property] \label{def:bcp}
We say that a metric space $(X,d)$ satisfies the \textit{Besicovitch Covering Property} (BCP) if there exists a constant $C\geq 1$ such that $\card \mathcal{B} \leq C$ for every Besicovitch family $\mathcal{B}$ of balls in $(X,d)$. 
\end{definition}

In this paper, we identify the first Heisenberg group $\HH$ with $\R^2 \times \R$ equipped with the group law given by
\begin{equation*}
(v,z)\cdot(v',z'):=(v+v',z+z'  +\omega(v,v')/2)
\end{equation*}
where $\omega(v,v'): = v_1 v'_2 - v_2 v'_1$ for $v=(v_1,v_2), v'=(v'_1,v'_2) \in\R^2$, and equipped with the family of dilations $(\delta_t)_{t\geq 0}$ defined by $\delta_t(v,z) := (tv,t^2z)$. We recall that a distance $d$ on $\HH$ is said to be homogeneous if it is left-invariant and one-homogeneous with respect to the family of dilations, see Section~\ref{sect:preliminaries}. 

\medskip

For the sake of completeness, we recall that the validity of BCP for a homogeneous distance $d$ on $\HH$ characterizes the validity of the differentiation theorem for every Radon measure over $(\HH,d)$. Theorem~\ref{thm:bcp-diff-meas-heis} below is indeed a corollary of~\cite[Theorem~1.5]{LeDonne_Rigot_graded_groups_BCP}. We refer to~\cite{Preiss83} or~\cite{Rigot_differentiation_measures} for more general statements in the metric setting.

\begin{theorem} [{\cite[Theorem~1.5]{LeDonne_Rigot_graded_groups_BCP}}] \label{thm:bcp-diff-meas-heis}
Let $d$ be a homogeneous distance on $\HH$. The differentiation theorem holds for every Radon measure $\lambda$ over $(\HH,d)$, that is,
\begin{equation*} 
\lim_{r\downarrow 0} \frac{1}{\lambda(B(x,r))} \int_{B(x,r)} f(y) \, d\lambda (y) = f(x) \;\; \text{ for } \lambda \text{-a.e. } x\in X
\end{equation*}
for every $f\in L^1_{loc} (\lambda)$, if and only if $(\HH,d)$ satisfies BCP.
\end{theorem}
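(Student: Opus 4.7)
The plan is to prove the two directions separately. Both rely on the standard reduction to the Lebesgue point property, which in turn reduces, via approximation of $L^1$ functions by continuous ones, to the weak $(1,1)$ boundedness of the Hardy--Littlewood maximal operator associated with $\lambda$.

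The forward implication ($\Leftarrow$) does not use the Heisenberg structure and is general for separable metric measure spaces. From BCP I would first derive the conclusion of the Besicovitch covering theorem: for any bounded subset $A \subset \HH$ and any function $r : A \to (0,R]$, the family $\{B(x,r(x))\}_{x\in A}$ admits a subcover splitting into at most $C$ subfamilies of pairwise disjoint balls, where $C$ is the constant in Definition~\ref{def:bcp}. A standard measure-theoretic argument then yields a Vitali-type covering property with respect to every Radon measure $\lambda$, which implies the weak $(1,1)$ maximal inequality for $\lambda$, and hence the differentiation theorem.

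For the converse ($\Rightarrow$), I would argue by contraposition, exploiting the self-similarity of $(\HH,d)$ under the dilations $(\delta_t)_{t\geq 0}$. If BCP fails, then for every integer $N$ there is a Besicovitch family $\mathcal{B}_N$ of cardinality at least $N$ whose balls share a common intersection point. Using left-translations and dilations I would pull each $\mathcal{B}_N$ to a Besicovitch family whose common intersection point is a fixed $p^{*}\in\HH$ and whose balls sit in $B(p^{*},r_k)$, with $r_k\downarrow 0$ geometrically fast enough that the configurations at different scales become mutually disjoint. I would then define a Radon measure $\lambda$ as a countable weighted sum of Dirac masses placed at the centers of all these rescaled families, with weights tuned so that the mass of $B(p^{*},r_k)$ and $B(p^{*},r_{k+1})$ are comparable up to a factor bounded away from $1$. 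Splitting the centers across consecutive scales into two classes $E_0, E_1$ in an alternating fashion, the characteristic function $f = \mathbf{1}_{E_0}$ would then produce averages on $B(p^{*},r_k)$ oscillating between two distinct limit values as $k\to\infty$, contradicting differentiation at $p^{*}$; a further replication around a suitably dense family of base points upgrades the failure from a single point to a set of positive $\lambda$-measure.

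The main obstacle is precisely this last step of the converse: transferring the oscillation of averages from a single carefully constructed point to a positive $\lambda$-measure set, while maintaining the delicate quantitative balance between the weights of the Dirac masses and the scales $r_k$. This is where the homogeneous group structure of $\HH$ is indispensable, and it is precisely the content of~\cite[Theorem~1.5]{LeDonne_Rigot_graded_groups_BCP} in the general graded-group framework. Since $\HH$ equipped with $(\delta_t)_{t\geq 0}$ is a graded group, the statement follows as a direct specialization.
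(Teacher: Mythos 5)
The paper does not prove Theorem~\ref{thm:bcp-diff-meas-heis}: it is stated purely as a recalled fact, cited directly as a corollary of~\cite[Theorem~1.5]{LeDonne_Rigot_graded_groups_BCP}, so there is no in-paper argument for you to match. Your proposal correctly recognizes this and lands in the same place — invoking that external result — while also outlining the standard strategy that the cited reference carries out.

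As a sketch, the forward direction (BCP $\Rightarrow$ differentiation) is the classical chain BCP $\to$ bounded-overlap covering $\to$ Vitali property for every Radon measure $\to$ weak $(1,1)$ maximal inequality $\to$ Lebesgue points, and is sound for any separable metric space. One small imprecision: the number of disjoint subfamilies in the extracted Besicovitch subcover need not equal the BCP constant $C$ from Definition~\ref{def:bcp}; it is a larger constant depending on $C$, and in fact for the maximal inequality one only needs the bounded-overlap statement, not the disjoint-subfamily decomposition. The converse direction is where the real work lies, and your sketch correctly identifies both the role of dilations/translations in rescaling bad Besicovitch configurations to a single base point and the central difficulty of promoting a single-point oscillation to a positive-measure failure set. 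You are right that filling in this step rigorously is exactly the content of~\cite[Theorem~1.5]{LeDonne_Rigot_graded_groups_BCP}, and since the paper itself simply cites that theorem here, deferring to it is consistent with the paper's treatment.
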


We briefly describe now our main results. Given a homogeneous distance $d$ on $\HH$, we know from~\cite{LeDonne_Nicolussi_regularity_spheres} that its unit ball centered at the origin $B$ can be described as
$$B = \{(v,z) \in \HH:\, v\in K,\ -\phi(-v)\le z\le \phi(v) \}$$
for some compact set $K\subset \R^2$ and some function $\phi: K \rightarrow [0,+\infty)$ locally Lipschitz on $\interior(K)$, which we call the \textit{profile of the homogeneous distance} $d$. We refer to Section~\ref{sect:preliminaries} for more details about profiles of homogeneous distances.

\medskip

The sufficient and necessary conditions for the validity of BCP proved in the present paper will be mainly given in terms of suitable properties for the profile of homogeneous distances.

\medskip

Namely, we prove the following sufficient condition, Theorem~\ref{thm:intro-sufficient-cdt} below. Given $v, w \in \R^2 \setminus\{0\}$, we denote by $\measuredangle (v,w)$ the unoriented angle between $v$ and $w$, and by $\|\cdot\|$ the Euclidean norm.

\begin{theorem} \label{thm:intro-sufficient-cdt}
Let $d$ be a homogeneous distance on $\HH$ with profile $\phi: K \rightarrow [0,+\infty)$. Assume that the following two conditions hold. First, there are $ m>  0$ and $\alpha \in (0,\pi / 2)$ such that, for a.e.~$v\in \interior(K)\setminus\{0\}$, we have, for all $w\in \R^2\setminus\{0\}$, $$\measuredangle (v,w) \leq \alpha \Rightarrow \langle \nabla\phi(v) , w \rangle \leq -m \|v\| \|w\|.$$
Second, there are $\kappa>0$ and $ M >  0$ such that $\{v\in\R^2 : \|v\|\leq \kappa\} \subset \interior(K)$ and $$\|\nabla\phi(v)\| \leq M \|v\| $$ for a.e.~$v\in \R^2$ such that $\|v\|\leq \kappa$. Then BCP holds in $(\HH,d)$.
\end{theorem}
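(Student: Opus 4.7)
The plan is to extract from the two profile conditions a quantitative angular separation between the horizontal projections of the centers in any Besicovitch family, and then convert this separation into a uniform cardinality bound through a packing argument in the plane. The starting point is a normalization: let $\mathcal{B} = \{B(p_i, r_i) : i \in I\}$ be a Besicovitch family with common point $q$. By left-invariance of $d$ we may translate so that $q = 0$; each center $p_i = (v_i, z_i)$ then lies in $B(0, r_i)$, meaning $v_i/r_i \in K$ and $-\phi(-v_i/r_i)\, r_i^2 \le z_i \le \phi(v_i/r_i)\, r_i^2$. Using the group law, the Besicovitch conditions $p_j \notin B(p_i, r_i)$ translate into
\[
p_i^{-1} p_j = \bigl(v_j - v_i,\ z_j - z_i - \omega(v_i, v_j)/2\bigr) \notin \delta_{r_i}(B).
\]

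\emph{Geometric content of the hypotheses.} Integrating condition~(1) along the segment $t \mapsto v + tw$, and using that $\measuredangle(v+tw, w) \le \measuredangle(v, w) \le \alpha$ since $\alpha < \pi/2$, one obtains the cone inequality
\[
\phi(v+w) \le \phi(v) - m\|v\|\|w\|
\]
whenever $v, v+w \in \interior(K)$ and $\measuredangle(v, w) \le \alpha$. Geometrically, the upper graph $\{(v, \phi(v))\}$ lies below a downward cone emanating from each of its interior points; by the symmetry of $d$ under inversion, the lower graph $\{(v, -\phi(-v))\}$ enjoys the matching upward cone property. Condition~(2) provides a round cap at the apex: $\phi(0) - M\|v\|^2/2 \le \phi(v) \le \phi(0)$ for $\|v\| \le \kappa$.

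\emph{Separation and counting.} The crucial step is the following dichotomy: there exist constants $c_1, \theta > 0$, depending only on $m, \alpha, M, \kappa$ and $K$, such that for any two balls $B(p_i, r_i), B(p_j, r_j) \in \mathcal{B}$ with $r_j \le r_i$, either $\measuredangle(v_i, v_j) \ge \theta$, or $\|v_j\|/r_i \le c_1$ (i.e.\ the smaller center sits near the apex of the larger ball). To establish it, one assumes the contrary and substitutes $p_i^{-1} p_j$ into the cone inequality, showing that the radial decrease of $\phi$ along the segment from $v_i/r_i$ to $v_j/r_i$ exceeds the perturbation coming from the symplectic cross term $\omega(v_i, v_j)/2$, which is bounded by $r_i^2\,\diam(K)^2/2$. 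This forces $p_i^{-1}p_j \in \delta_{r_i}(B)$, contradicting the Besicovitch property. With the dichotomy in hand, balls whose projections fall in a common angular sector of width $\theta$ can be ordered by radius and bounded in number via condition~(2); summing over the $O(1/\theta)$ sectors yields the universal cardinality bound.

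\emph{Main obstacle.} The hardest part is the separation lemma. One must uniformly control the symplectic cross term $\omega(v_i, v_j)/2$ across all sizes and orientations of the centers, while condition~(1) is only assumed almost everywhere; a Fubini/density argument is required to choose a segment along which the cone inequality applies at a.e.~point. The Lipschitz bound from condition~(2) is then essential to handle the degenerate apex regime, where the horizontal projections of centers collapse to the origin and the angular information disappears. Ensuring that the final constant depends only on the stated data $(m, \alpha, M, \kappa, K)$ is the core technical challenge, and is presumably where the arguments must depart from those of the earlier papers cited in the introduction.
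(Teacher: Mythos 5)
Your cone inequality is correct as far as it goes (using $\measuredangle(v+tw,w)\le\measuredangle(v,w)$ and $\|v+tw\|\ge\|v\|$ when $\measuredangle(v,w)<\pi/2$), but the counting architecture it is supposed to feed has a genuine gap.

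The paper does not reduce to an angular packing argument. Its abstract sufficient condition (Theorem~\ref{thm:abstract-sufficient-cdt}) groups the Besicovitch family by \emph{scale} --- radii within a factor $\epsilon$ are handled by compactness of $B$, and distinct scales are handled by a map $\sigma$ into a compact space $X=\bb S^1\times[-M,M]\cup\{\pm(0,z_N)\}$ that records \emph{both} the angular direction $v/\|v\|$ \emph{and} the rescaled height $z$ of the point where the ray through $p$ meets $\partial B$. The technical work (Corollary~\ref{cor:abstract-sufficient-cdt} and Lemma~\ref{lem:sufficient-for-bcp}) is to show that if two boundary points have both close angle and close height, then one lies in the unit ball about the other after a dilation $\delta_t$, $t\le\epsilon$. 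Your dichotomy records only the angular coordinate. Two balls whose horizontal projections point in the same direction but whose centers sit at very different heights on $\partial B$ (after rescaling) are not separated by angle at all, and it is precisely this degree of freedom that the map $\sigma$ controls. Your sentence ``balls whose projections fall in a common angular sector can be ordered by radius and bounded in number via condition~(2)'' is where this missing height separation would have to be produced, and no mechanism is given. Condition~(2) is a local Lipschitz bound on $\nabla\phi$ near $0$; by itself it cannot bound how many pairwise-non-containing balls can share an angular sector across widely separated scales.

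There is also a more local issue in the ``separation lemma''. When you substitute $p_i^{-1}p_j$ into the cone inequality, the $v$-coordinate becomes $(v_j-v_i)/r_i$, and you need a lower bound on $\phi\bigl((v_j-v_i)/r_i\bigr)$. To apply the cone inequality in the form $\phi(u+w)\le\phi(u)-m\|u\|\|w\|$ with $u+w=v_j/r_i$ and $u=(v_j-v_i)/r_i$, $w=v_i/r_i$, you need $\measuredangle(v_j-v_i,\,v_i)\le\alpha$. But this angle is close to $0$ or close to $\pi$ depending on whether $\|v_j\|>\|v_i\|$ or not, so the inequality only applies in one of the two cases and with further restrictions. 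The paper avoids this trap by parametrizing the curve $\gamma(t)=tv-v_0$ of projections and applying the differential form of the hypothesis pointwise in $t$, carefully tracking the angle $\measuredangle(\gamma(t),-\gamma'(t))$, which is small only for $t$ up to a threshold $t^-$ (equations~\eqref{e:def-t^-}--\eqref{e:t^-lowerbound}). Past that threshold the argument switches to condition~(2) via the quantity $\eta=\min_{\|v\|\le\kappa}\phi(v)$. Your sketch collapses this two-regime case analysis into a single ``cone inequality vs.\ cross term'' comparison, and the estimate $|\omega(v_i,v_j)|/2\le r_i^2\diam(K)^2/2$ is far too crude to win against the cone gain in the regime where $\|v_i\|$ and $\|v_j\|$ are both of order $r_i$ but the angular separation is small --- this is exactly what forces the paper to take $t\le\epsilon$ rather than $t\le1$.

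In short: the proposal correctly identifies the geometric content of the two hypotheses, but the passage from the boundary estimate to a cardinality bound is missing the height coordinate, and without it no finite bound follows. The paper's abstract Theorem~\ref{thm:abstract-sufficient-cdt} (compactness of $B$ for similar scales, compactness of $X$ for separated scales) is a different, and here essential, organizational device.
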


Theorem~\ref{thm:intro-sufficient-cdt} will be obtained through an application of an abstract sufficient condition proved in Section~\ref{sect:sufficient-cdt}, see Theorem~\ref{thm:abstract-sufficient-cdt} and its corollary, Corollary~\ref{cor:abstract-sufficient-cdt}.

\medskip

Conversely, we also prove several necessary conditions. For homogeneous distances, without any further assumption on the regularity of their profile, our main necessary conditions for the validity of BCP are gathered in Theorem~\ref{thm:intro-characterization-rot-inv-dist} below. We refer to Section~\ref{sect:necessary-conditions} for more detailed statements. We denote by $\bb S^1$ the Euclidean unit circle in $\mathbb{R}^2$.

\begin{theorem} \label{thm:intro-necessary-cdt}
Let $d$ be a homogeneous distance on $\HH$ with profile $\phi: K \rightarrow [0,+\infty)$. Assume that BCP holds in $(\HH,d)$. Then there is a countable closed set $W\subset\bb S^1$ such that $\bb S^1 \setminus W$ can be written as a countable union of relatively open subsets $O_j$, and, for each $j$, there are $m_j>0$ and $\alpha_j \in (0,\pi/2)$ such that, for all $v\in \interior(K) \setminus \{0\}$ such that $\phi$ is differentiable at $v$ and $ v/\|v\| \in O_j$, we have, for all $w\in\R^2\setminus\{0\}$, $$\measuredangle (v,w) \leq \alpha_j \Rightarrow \langle \grad\phi(v) , w \rangle  \le -  m_j \|v\|\|w\|.$$ Furthermore, $\phi(0) = \max \{\phi(v): v \in K\}$ and $\phi$ is differentiable at $0$ with $\nabla \phi(0) = 0$.
\end{theorem}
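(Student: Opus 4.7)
The plan is to argue by contradiction on each conclusion: whenever one fails, I would build a Besicovitch family of arbitrarily large cardinality, contradicting BCP.

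I would first handle the statements at the origin. If some $v_0\in K$ satisfied $\phi(v_0)>\phi(0)$, then left-translating along the $z$-axis suitably scaled and arranged copies of the ``tall'' point $(v_0,\phi(v_0))$ produces families of balls whose centers lie mutually outside one another while sharing a common point, with cardinality blowing up as the vertical spacings are refined; this contradicts BCP and forces $\phi(0)=\max\phi$. Since the unit ball contains a neighborhood of $0$, we have $0\in\interior(K)$, so $0$ is an interior maximum of the locally Lipschitz function $\phi$. The differentiability at $0$ with $\nabla\phi(0)=0$ then follows from a quadratic estimate $\phi(0)-\phi(v)\leq C\|v\|^2$ for $v$ near $0$, itself derivable from BCP via analogous small-scale Besicovitch-family obstructions.

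For the main angular-gradient statement, call a direction $\theta\in\bb{S}^1$ \emph{good} if there exist a relatively open neighborhood $V_\theta\subset\bb{S}^1$ of $\theta$ and constants $m_\theta,\alpha_\theta>0$ such that the inequality $\langle\grad\phi(v),w\rangle\leq -m_\theta\|v\|\|w\|$ holds for all $v\in\interior(K)\setminus\{0\}$ at which $\phi$ is differentiable with $v/\|v\|\in V_\theta$ and for all $w\in\R^2\setminus\{0\}$ with $\measuredangle(v,w)\leq\alpha_\theta$. Let $G$ be the set of good directions and $W:=\bb{S}^1\setminus G$; by construction $G$ is open and $W$ is closed. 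The decomposition step is then routine: by the Lindel\"of property of the second-countable space $\bb{S}^1$, the cover $\{V_\theta\}_{\theta\in G}$ admits a countable subcover $\{O_j\}_j$, with each $O_j$ inheriting uniform constants $(m_j,\alpha_j)$ from its parent $V_\theta$.

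The heart of the matter, and the main obstacle, is to show that $W$ is countable. My approach is a Cantor--Bendixson reduction: write $W=P\cup C$ with $P$ perfect and $C$ countable, and aim to rule out $P\neq\emptyset$. If $P$ were nonempty then for any integer $N$ one could select $N$ well-separated directions $\theta_1,\dots,\theta_N\in P$. At each $\theta_i$, the failure of the local uniform angular condition yields witnesses $v_i$ with $v_i/\|v_i\|$ close to $\theta_i$ and $w_i$ for which the gradient inequality fails with arbitrarily small margin; each witness translates into a pair of $\HH$-balls forming a two-element Besicovitch configuration localised near the ray generated by $\theta_i$. The delicate step is to combine these $N$ pairwise configurations, across distinct scales and orientations, into a single Besicovitch family whose cardinality grows to infinity with $N$, while preserving mutual exclusion of all centers and the existence of a common intersection point. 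As $N\to\infty$ this contradicts BCP and completes the proof.
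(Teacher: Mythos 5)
Your high-level structure (define the bad set $W$ via failure of a uniform local angular condition, observe $W$ closed, use Lindel\"of for the decomposition, prove the maximum at $0$ and differentiability there) is sound, and the Lindel\"of step is indeed routine. But the two load-bearing steps are either gaps or likely-false claims, and in both cases the paper resolves the difficulty by a fundamentally different mechanism.

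First, the countability of $W$. Your plan is a Cantor--Bendixson reduction: if $W$ had a nonempty perfect part, select $N$ directions in it, at each one extract a ``local'' two-ball Besicovitch witness to the failure of the angular bound, and then merge these $2N$ balls into a single Besicovitch family. This merging step is not just ``delicate'' --- there is no mechanism in the proposal that would force a common intersection point for balls supported near $N$ widely separated rays at possibly incomparable scales, and BCP does not interact with ``pairwise'' configurations that fail to share a point. The paper never needs any such combination. It deduces from the BCP-based criterion (Proposition~\ref{prop:necessary-condition-bcp}, a consequence of \cite[Theorem~6.1]{LeDonne_Rigot_Heisenberg_BCP}) the two scalar inequalities of Lemma~\ref{lem:first-nec-cdt}, and then defines $W^\pm:=\bb S^1\setminus\bigcup_{v}\Gamma^\pm_v$ where $\Gamma_v^\pm$ is the open arc of directions at signed angle in $(0,\epsilon_v)$ (resp.\ $(-\epsilon_v,0)$) from $v$. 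The countability of $W^+$ (and similarly $W^-$) is then a one-paragraph combinatorial fact: for distinct $w,w'\in W^+$ the arcs $\Gamma_w^+$ and $\Gamma_{w'}^+$ are disjoint, so $\{\Gamma_w^+\}_{w\in W^+}$ is a disjoint family of nonempty open subsets of $\bb S^1$ and hence countable. No contradiction argument, no geometric construction. Moreover, for $u\notin W$ the paper produces the uniform $(m_u,\alpha_u)$ by writing any test direction $w'$ as $a'v+b'v'$ with $a',b'>0$ for the two reference directions $v,v'$ with $u\in\Gamma_v^+\cap\Gamma_{v'}^-$, and adding the two inequalities of Lemma~\ref{lem:first-nec-cdt}; your proposal defines ``good'' directions by fiat and never exhibits a mechanism for verifying that any direction is good.

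Second, the differentiability at $0$. You claim a quadratic bound $\phi(0)-\phi(v)\le C\|v\|^2$ as a consequence of BCP. This is almost certainly false and is in any case not established by the paper. The necessary angular condition gives, after radial integration on good directions, a bound of the form $\phi(0)-\phi(v)\ge \tfrac m2\|v\|^2$, i.e.\ the opposite inequality; the upper bound $\phi(0)-\phi(v)\le C\|v\|^2$ corresponds to the hypothesis~\eqref{e:phi-3} of the \emph{sufficient} condition ($\|\nabla\phi(v)\|\lesssim\|v\|$ near $0$), which is precisely the hypothesis the paper declines to derive from BCP --- it is the gap region between the necessary and sufficient sides. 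The paper's proof of Theorem~\ref{thm:diff-at-0} is different and weaker in exactly the right way: using Theorem~\ref{thm:radially-decreasing-profile} it first gets $\limsup_{t\to 0^+}(\phi(tv)-\phi(0))/t\le 0$, then rules out $\liminf<0$ by invoking the ``step'' criterion \cite[Theorem~6.3]{LeDonne_Rigot_Heisenberg_BCP} (Theorem~\ref{thm:criterion-no-bcp}), and finally upgrades the one-sided directional derivative to full differentiability with vanishing gradient using local Lipschitzness of $\phi$ and compactness of $\bb S^1$. Your proposal should be repaired along those lines; as written, both the countability step and the differentiability step rest on claims you cannot deliver.
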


Combining Theorem~\ref{thm:intro-sufficient-cdt} and Theorem~\ref{thm:intro-necessary-cdt}, one gets a particularly simple characterization, Theorem~\ref{thm:intro-characterization-rot-inv-dist} below, for the validity of BCP for rotationally invariant homogeneous distances, under mild regularity assumptions on their profile. Namely, we say that a homogeneous distance $d$ on $\HH$ is rotationally invariant if rotations around the $z$-axis - in the Euclidean sense - are isometries in $(\HH,d)$. Equivalently, a homogeneous distance $d$ on $\HH$ with unit ball centered at the origin $B$ is rotationally invariant if and only if the intersection of $B$ with the $v$-plane is a Euclidean disc with radius $r_d$ for some $r_d >0$ and $B$ can be described as 
\begin{equation*}
B = \{(v,z) \in \HH:\, \|v\| \leq r_d,\ -\varphi(\|v\|)\le z\le \varphi(\|v\|) \}
\end{equation*}
for some even function $\varphi : [-r_d,r_d] \rightarrow [0,+\infty)$ locally Lipschitz on $(-r_d,r_d)$, which we call the \textit{radial profile of the rotationally invariant homogeneous distance} $d$.

\begin{theorem} \label{thm:intro-characterization-rot-inv-dist}
Let $d$ be a rotationally invariant homogeneous distance on $\HH$ with radial profile $\varphi: [-r_d,r_d] \rightarrow [0,+\infty)$. Assume that $|\varphi'(s) / s|$ is essentially bounded in a neighbourhood of $0$, which holds in particular if $\varphi \in C^1((-r_d,r_d))$ with $\varphi'$ is differentiable at $0$. Then BCP holds in $(\HH,d)$ if and only if there is $m>0$ such that $\varphi'(s) \leq -ms$ for a.e.~$s\in (0,r_d)$.
\end{theorem}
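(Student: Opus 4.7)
The plan is to reduce each direction of the biconditional to Theorem~\ref{thm:intro-sufficient-cdt} and Theorem~\ref{thm:intro-necessary-cdt}, exploiting that in the rotationally invariant setting $\phi(v)=\varphi(\|v\|)$ on $K=\{v\in\R^2:\|v\|\le r_d\}$, so that for $v\ne 0$ at which $\varphi$ is differentiable at $\|v\|$ one has $\nabla\phi(v)=\varphi'(\|v\|)\,v/\|v\|$ and hence
\[
\langle\nabla\phi(v),w\rangle = \varphi'(\|v\|)\,\|w\|\cos\measuredangle(v,w).
\]
Both hypotheses of Theorem~\ref{thm:intro-sufficient-cdt} and the conclusion of Theorem~\ref{thm:intro-necessary-cdt} therefore become one-variable statements about $\varphi'$.

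For the sufficient direction, assume $\varphi'(s)\le -ms$ for a.e.\ $s\in(0,r_d)$. Fix any $\alpha\in(0,\pi/2)$. For a.e.\ $v\in\interior(K)\setminus\{0\}$ and all $w\ne 0$ with $\measuredangle(v,w)\le\alpha$, the factor $\|w\|\cos\measuredangle(v,w)$ is nonnegative and bounded below by $\|w\|\cos\alpha$, while $\varphi'(\|v\|)\le -m\|v\|<0$. Multiplying gives
\[
\langle\nabla\phi(v),w\rangle \le -m\cos\alpha\cdot\|v\|\|w\|,
\]
which verifies the first hypothesis of Theorem~\ref{thm:intro-sufficient-cdt} with constant $m\cos\alpha$. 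For the second hypothesis, $\|\nabla\phi(v)\|=|\varphi'(\|v\|)|$, and the essential boundedness of $|\varphi'(s)/s|$ near $0$ directly yields $\|\nabla\phi(v)\|\le M\|v\|$ for a.e.\ $v$ in some Euclidean ball $\{\|v\|\le\kappa\}\subset\interior(K)$. Theorem~\ref{thm:intro-sufficient-cdt} then gives BCP.

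For the necessary direction, suppose BCP holds and apply Theorem~\ref{thm:intro-necessary-cdt} to obtain the countable closed set $W\subset\bb S^1$, the decomposition $\bb S^1\setminus W=\bigcup_j O_j$ into relatively open sets, and constants $m_j>0$, $\alpha_j\in(0,\pi/2)$. Since $\bb S^1\setminus W$ is uncountable, pick any index $j_0$ and any $\theta\in O_{j_0}$. For every $s\in(0,r_d)$ at which $\varphi'$ exists, $\phi$ is differentiable at $v:=s\theta\in\interior(K)\setminus\{0\}$ with $v/\|v\|\in O_{j_0}$, and applying the implication in Theorem~\ref{thm:intro-necessary-cdt} with the particular choice $w=v$ (so $\measuredangle(v,w)=0\le\alpha_{j_0}$) yields
\[
\varphi'(s)\,s \;=\; \langle\nabla\phi(v),v\rangle \;\le\; -m_{j_0}\|v\|^2 \;=\; -m_{j_0}s^2,
\]
i.e.\ $\varphi'(s)\le -m_{j_0}s$. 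This holds for a.e.\ $s\in(0,r_d)$, so we may take $m=m_{j_0}$.

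The subtle point is the necessary direction: Theorem~\ref{thm:intro-necessary-cdt} provides only \emph{per-component} constants $m_j$ with no a priori uniform positive lower bound, which in a general (non-rotationally-invariant) setting would be a genuine obstacle to recovering a single constant $m$. Here, rotational invariance collapses this difficulty entirely, since $\varphi'(s)$ is intrinsically a function of the radial coordinate alone and its sign/size can be read off along any single non-exceptional ray, so one index $j_0$ suffices. The regularity assumption on $|\varphi'(s)/s|$ near $0$ is only invoked in the sufficient direction, to control the second hypothesis of Theorem~\ref{thm:intro-sufficient-cdt} near the origin where $v/\|v\|$ is undefined.
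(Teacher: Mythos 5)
Your argument is correct and follows essentially the same route as the paper: the ``if'' direction reduces to the sufficient criterion exactly as in Corollary~\ref{cor:sufficiention-rot-inv}, and the ``only if'' direction reads off the per-component lower bound along a single ray from Theorem~\ref{thm:intro-necessary-cdt} (equivalently, \eqref{e:necessary-cdt-gradphi-int(K)-2}) exactly as in Corollary~\ref{cor:necessary-rot-inv}. The one small stylistic difference is that the paper's proof of Corollary~\ref{cor:necessary-rot-inv} selects a good direction $v\in\bb S^1$ via a Fubini-type argument in polar coordinates, whereas you observe directly that $\phi=\varphi\circ\|\cdot\|$ is differentiable at $s\theta$ whenever $\varphi$ is differentiable at $s$, so any $\theta\in O_{j_0}$ works; your observation that rotational invariance collapses the obstruction of non-uniform $m_j$'s is accurate and well put.
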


Theorem~\ref{thm:intro-application} below gives an application of this characterization. We recall that the Kor\'anyi distance $d_0$ is the homogeneous distance on $\HH$ given by 
\begin{equation} \label{e:def-koranyi-dist}
d_0(0,(v,z)) := \left(\|v\|^4+16z^2\right)^{1/4}.
\end{equation}
For $\epsilon>0$, we define $d_\epsilon$ as the homogeneous distance on $\HH$ given by 
\begin{equation} \label{e:d_epsilon}
d_\epsilon(0,(v,z)) :=
\left( \epsilon^2 \|v\|^2 + d_0(0,(v,z))^2 \right)^{1/2}.
\end{equation}
Note that these homogeneous distances are rotationally invariant. Note also that 
\begin{equation*} \label{e:biLip-dist}
d_0 \leq d_\epsilon \leq (1+\epsilon) d_0
\end{equation*}
for every $\epsilon > 0$.

\begin{theorem} \label{thm:intro-application}
BCP hold in $(\HH,d_\epsilon)$ for every $\epsilon >0$ whereas BCP does not hold in $(\HH,d_0)$.
\end{theorem}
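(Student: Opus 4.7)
The plan is to apply the characterization Theorem~\ref{thm:intro-characterization-rot-inv-dist} for rotationally invariant homogeneous distances. Both $d_0$ and $d_\epsilon$ are manifestly rotationally invariant, so it suffices to identify their radial profiles $\varphi$, verify the mild regularity hypothesis (essentially, $|\varphi'(s)/s|$ bounded near $0$), and then decide whether the sign condition $\varphi'(s) \leq -m s$ of that characterization is satisfied.

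For the Kor\'anyi distance $d_0$, solving~\eqref{e:def-koranyi-dist} for $z$ at fixed $s = \|v\|$ yields $r_{d_0} = 1$ and the smooth radial profile $\varphi_0(s) = \frac{1}{4}\sqrt{1 - s^4}$ on $[-1,1]$, with $\varphi_0'(0) = 0$, so the regularity hypothesis is satisfied. A direct computation gives
\begin{equation*}
\frac{\varphi_0'(s)}{s} = -\frac{s^2}{2\sqrt{1-s^4}},
\end{equation*}
which tends to $0$ as $s \to 0^+$. Hence no $m > 0$ satisfies $\varphi_0'(s) \leq -m s$ almost everywhere on $(0, 1)$, and Theorem~\ref{thm:intro-characterization-rot-inv-dist} rules out BCP in $(\HH, d_0)$.

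For $d_\epsilon$, the defining relation~\eqref{e:d_epsilon} gives the implicit equation $\epsilon^2 s^2 + \sqrt{s^4 + 16\, \varphi_\epsilon(s)^2} = 1$, equivalently
\begin{equation*}
16\, \varphi_\epsilon(s)^2 = 1 - 2\epsilon^2 s^2 + (\epsilon^4 - 1)\, s^4,
\end{equation*}
so $r_{d_\epsilon} = 1/\sqrt{1+\epsilon^2}$ and $\varphi_\epsilon(0) = 1/4$, and the radial profile $\varphi_\epsilon$ is smooth near $0$ with $\varphi_\epsilon'(0) = 0$. Hence again the regularity hypothesis holds. Implicit differentiation of the first relation above produces
\begin{equation*}
\frac{\varphi_\epsilon'(s)}{s} = -\frac{\epsilon^2 + (1 - \epsilon^4)\, s^2}{8\, \varphi_\epsilon(s)}.
\end{equation*}
A short check using the explicit formula for $\varphi_\epsilon^2$ shows $\varphi_\epsilon(s) \leq 1/4$ on $[0, r_{d_\epsilon}]$, so it remains to bound the numerator from below by a strictly positive constant. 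If $\epsilon \leq 1$ then $(1-\epsilon^4) s^2 \geq 0$ and the numerator is at least $\epsilon^2$; if $\epsilon > 1$ the numerator is linear and decreasing in $s^2$, so it is minimized at $s = r_{d_\epsilon}$, where $r_{d_\epsilon}^2 = 1/(1+\epsilon^2)$ yields the value $1$. Consequently $\varphi_\epsilon'(s)/s \leq -\min(\epsilon^2, 1)/2$ throughout $(0, r_{d_\epsilon})$, and Theorem~\ref{thm:intro-characterization-rot-inv-dist} delivers BCP in $(\HH, d_\epsilon)$.

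I do not expect any substantial difficulty: once the characterization is in hand, the argument reduces to an explicit computation of the radial profile and the asymptotics of $\varphi'(s)/s$ near $s = 0$. The only mildly delicate point is the case split $\epsilon \lessgtr 1$ needed to keep the numerator of $\varphi_\epsilon'(s)/s$ uniformly away from zero on the entire interval $(0, r_{d_\epsilon})$, which, as indicated above, is handled by observing that the numerator is monotone in $s^2$ with an easily evaluated minimum at one of the two endpoints.
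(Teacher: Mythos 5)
Your proof is correct and follows the paper's own route: identify that $d_0$ and $d_\epsilon$ are rotationally invariant, compute their radial profiles, and apply the characterization of Theorem~\ref{thm:intro-characterization-rot-inv-dist} (whose two halves are Corollary~\ref{cor:sufficiention-rot-inv-bis} and Corollary~\ref{cor:necessary-rot-inv}). The only difference is in verifying that $\varphi_\epsilon'(s)\le -m s$: the paper observes $\varphi_\epsilon'''<0$, so $\varphi_\epsilon''(s)\le\varphi_\epsilon''(0)=-\epsilon^2/2$ and one may take $m=\epsilon^2/2$, whereas you bound $\varphi_\epsilon'(s)/s$ directly by implicit differentiation and obtain $m=\min(\epsilon^2,1)/2$ --- an equally valid, slightly more elementary variation.
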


As already said, it has already been noticed, independently and at the same time, in~\cite{Koranyi-Reimann95} and~\cite{Sawyer_Wheeden}, that BCP does not hold for the Kor\'anyi distance. On the other hand, it has been proved in~\cite{LeDonne_Rigot_Heisenberg_BCP} that BCP holds in $(\HH,d_1)$. As an application of Theorem~\ref{thm:intro-characterization-rot-inv-dist}, we can hence recover both results, but, more importantly, we get the validity of BCP for all homogeneous distances $d_\epsilon$ with $\epsilon >0$. This gives in particular  an example of a metric space $(\HH,d_0)$ that does not satisfy BCP but whose distance $d_0$ is as close as one wants from bi-Lipschitz equivalent distances for which BCP holds. 

\medskip

Going back to the necessary conditions and to homogeneous distances that are not assumed to be rotationally invariant, we will prove some improvements of Theorem~\ref{thm:intro-necessary-cdt} under further regularity assumptions on the profile of the homogeneous distance, see Section~\ref{subsect:additional-nec-cdt}. In particular, Theorem~\ref{thm:intro-more-nec-cdt} below is a consequence of the results proved in this last section of the paper.

\begin{theorem} \label{thm:intro-more-nec-cdt}
Let $d$ be a homogeneous distance on $\HH$ with profile $\phi: K \rightarrow [0,+\infty)$. Assume that BCP holds in $(\HH,d)$ and assume that $\phi \in C^2(\interior (K))$. Then $\langle \grad\phi(w) , w\rangle < 0$ for all $w\in \interior(K) \setminus \{0\}$, $\phi(w) < \phi(0)$ for all $w\in K \setminus \{0\}$, and the Hessian of $\phi$ at $0$, $H\phi(0)$, is semi-definite negative. Moreover, if $H\phi$ is assumed to be differentiable at $0$, then $H\phi(0)$ is definite negative.
\end{theorem}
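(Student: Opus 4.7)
The plan is to combine Theorem~\ref{thm:intro-necessary-cdt} with the $C^2$ regularity of $\phi$. Denote by $W$, $(O_j)_j$, $(m_j)_j$, $(\alpha_j)_j$ the data provided by that theorem. I would first upgrade its conclusion to the pointwise strict inequality $\langle\grad\phi(w),w\rangle<0$ for every $w\in\interior(K)\setminus\{0\}$. Taking $v=w$ in Theorem~\ref{thm:intro-necessary-cdt} yields this directly when $w/\|w\|\in O_j$, via $\langle\grad\phi(w),w\rangle\le -m_j\|w\|^2$. For directions $w/\|w\|\in W$, mere $C^2$ continuity together with approximation by $v_n/\|v_n\|\in\bb S^1\setminus W$ only produces a non-strict inequality, since the constants $m_{j_n}$ may deteriorate; closing this gap is the main technical obstacle and is where the refined necessary conditions of Section~\ref{subsect:additional-nec-cdt} should be invoked, extending the estimate to all of $\interior(K)\setminus\{0\}$ under the $C^2$ assumption. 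From this, the inequality $\phi(w)<\phi(0)$ for $w\in K\setminus\{0\}$ follows: for $w\in\interior(K)\setminus\{0\}$, by integrating $\frac{d}{dt}\phi(tw)=\frac{1}{t}\langle\grad\phi(tw),tw\rangle<0$ on $(0,1]$; for boundary points $w\in\partial K$, by approaching from the interior via $sw$, $s\uparrow 1$, using continuity of $\phi$ together with the weak inequality $\phi\le\phi(0)$ already granted by Theorem~\ref{thm:intro-necessary-cdt}.

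The semi-definite negativity of $H\phi(0)$ then follows from the $C^2$ Taylor expansion at the origin. Since $\grad\phi(0)=0$, we have
\begin{equation*}
\phi(w)-\phi(0)=\tfrac{1}{2}\langle H\phi(0)w,w\rangle+o(\|w\|^2),
\end{equation*}
and feeding $w=tu$, $t\downarrow 0$, into $\phi(w)\le\phi(0)$ yields $\langle H\phi(0)u,u\rangle\le 0$ for every $u\in\R^2$.

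For the definite negativity under the extra hypothesis that $H\phi$ is differentiable at $0$, I would argue by contradiction. Suppose $H\phi(0)u_0=0$ for some unit $u_0$. Differentiability of $H\phi$ at $0$ combined with $\grad\phi(0)=0$ yields the refined expansion $\grad\phi(tu)=tH\phi(0)u+O(t^2)$ as $t\downarrow 0$, uniformly in $u$ on bounded sets. For any $u\in O_j$ with $\measuredangle(u,u_0)<\alpha_j$, applying Theorem~\ref{thm:intro-necessary-cdt} with $v=tu$ and test vector $u_0$ gives $\langle\grad\phi(tu),u_0\rangle\le -m_j t$; dividing by $t$ and letting $t\downarrow 0$ yields $\langle H\phi(0)u,u_0\rangle\le -m_j$. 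Passing $u\to u_0$ within a single $O_j$ abutting $u_0$ and using linearity of $H\phi(0)$ produces the contradiction $\langle H\phi(0)u_0,u_0\rangle\le -m_j<0$ against $H\phi(0)u_0=0$. The delicate point is the existence of such an abutting $O_j$ with a non-degenerate $m_j$: when $u_0$ is a two-sided accumulation point of $W$, no single $O_j$ approaches $u_0$, and the argument requires the refinements of Section~\ref{subsect:additional-nec-cdt}. It is precisely here that the additional differentiability hypothesis on $H\phi$ is expected to enter, ruling out the pathological configurations in which the components $O_j$ near $u_0$ all have arbitrarily small $m_j$. This last step is where I expect the main obstacle to lie.
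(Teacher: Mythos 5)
Your approach to the second claim (integrating along rays to get $\phi(w)<\phi(0)$) and to the third claim (Taylor expansion at $0$ against $\phi\le\phi(0)$ and $\grad\phi(0)=0$) is correct and somewhat more elementary than the paper's for the third claim, but the first and fourth claims are left with gaps that you explicitly flag and do not close, and which your chosen strategy cannot close.

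The root problem is that you try to run everything through Theorem~\ref{thm:intro-necessary-cdt}, whose quantitative input $\langle\grad\phi(w),w'\rangle\le -m_j\|w\|\|w'\|$ is only available off the exceptional set $W$, with constants $m_j$ tied to the component $O_j$ and no control as one approaches $W$. Under the $C^2$ hypothesis there is nothing that prevents $W$ from being, say, a convergent sequence together with its limit; approximation then yields only the non-strict inequality, exactly as you observe. Hoping that ``the refinements of Section~\ref{subsect:additional-nec-cdt}'' patch this is circular, since Section~\ref{subsect:additional-nec-cdt} is where Theorem~\ref{thm:intro-more-nec-cdt} itself is proved. The paper's actual route bypasses Theorem~\ref{thm:intro-necessary-cdt} entirely and returns to the raw cone-exclusion condition, Proposition~\ref{prop:necessary-condition-bcp}, which holds for every direction $v$ with no exceptional set. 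One introduces the functions
\begin{equation*}
g_{\gamma,v}(t)=-\langle\grad\phi(\gamma(t)),v\rangle+\tfrac12\omega(\gamma(t),v),\qquad
h_{\gamma,v}(t)=-\langle\grad\phi(\gamma(t)),v\rangle-\tfrac12\omega(\gamma(t),v),
\end{equation*}
along a smooth curve $\gamma$ through the point of interest; Proposition~\ref{prop:necessary-condition-bcp} forces $g_{\gamma,v}\ge0$ and $h_{\gamma,v}\ge0$ near $t=0$. For the strict inequality one takes $\gamma(0)=w$, $v=w$: then $g_{\gamma,w}(0)=h_{\gamma,w}(0)=-\langle\grad\phi(w),w\rangle$, and if this vanished then $t=0$ would be an interior minimum of both $g_{\gamma,w}$ and $h_{\gamma,w}$, so $g'_{\gamma,w}(0)=h'_{\gamma,w}(0)=0$; subtracting the two first-derivative formulas yields $\omega(\gamma'(0),w)=0$ for arbitrary $\gamma'(0)$, a contradiction since $w\ne0$. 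For the definite-negativity statement the same idea is pushed to second order: with $\gamma(0)=0$, $\gamma'(0)=v$ a putative null vector of $H\phi(0)$, and $\gamma''(0)=u$ arbitrary, one has $g_{\gamma,v}(0)=g'_{\gamma,v}(0)=0$ and $g_{\gamma,v}\ge0$, hence $g''_{\gamma,v}(0)\ge0$, which reads $\omega(u,v)\ge C$ for a constant $C$ independent of $u$ -- impossible as $u$ ranges over $\R^2$. This second-order argument is precisely what your sketch is missing; nothing in the vicinity of Theorem~\ref{thm:intro-necessary-cdt} supplies it.
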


To conclude this introduction, we stress that it seems to be a quite delicate issue to fill in the little gap between the sufficient and the necessary conditions proved in the present paper in order to get a characterization of non-rotationally invariant homogeneous distances satisfying BCP on $\HH$, even under the additional assumptions about the  regularity of their profile. However, our results should be strong enough to cover the study of the validity of BCP for most explicit homogeneous distances one may work with in $\HH$.

\medskip

The rest of the paper is organized as follows. In Section~\ref{sect:preliminaries}, we recall our conventions about the Heisenberg group $\HH$, and we refine the description proved in~\cite{LeDonne_Nicolussi_regularity_spheres} about unit balls centered at the origin for homogeneous distances on $\HH$. We also fix some conventions and notations for later use. Applications of Theorem~\ref{thm:intro-sufficient-cdt} are given in Section~\ref{subsect:applications}, see for instance Corollary~\ref{cor:sufficiention-rot-inv} and Corollary~\ref{cor:sufficiention-rot-inv-bis}, which give the "if" part in Theorem~\ref{thm:intro-characterization-rot-inv-dist}, and the application for the proof of the validity of BCP for the homogeneous distances $d_\epsilon$ for $\epsilon >0$. We prove in Section~\ref{subsect:abstract-sufficient-cdt} an abstract sufficient condition on which the proof of Theorem~\ref{thm:intro-sufficient-cdt} will be based. The proof of Theorem~\ref{thm:intro-sufficient-cdt} itself is given in Section~\ref{subsect:proof-main-sufficient}. Section~\ref{sect:necessary-conditions} is devoted to the proof of several necessary conditions for the validity of BCP. Our arguments to get these necessary conditions are based on Proposition~\ref{prop:necessary-condition-bcp} which is a consequence of a sufficient condition for the non-validity of BCP proved in~\cite{LeDonne_Rigot_Heisenberg_BCP}, see Section~\ref{subsect:preliminaries-nec-cdt}. The proof of Theorem~\ref{thm:intro-necessary-cdt} can be found in Section~\ref{subsect:nec-cdt-general-case}. We also refer to Theorem~\ref{thm:necessary-cdt-int(K)}, Theorem~\ref{thm:radially-decreasing-profile} and Theorem~\ref{thm:diff-at-0} for more detailed statements. The "only if" part in Theorem~\ref{thm:intro-characterization-rot-inv-dist} will be given by Corollary~\ref{cor:necessary-rot-inv} and, as one of its applications, we will recover the fact that BCP does not hold for the Kor\'anyi distance. In section~\ref{subsect:additional-nec-cdt}, we show several improvements of the necessary conditions proved in the previous section under additional regularity assumptions about the profile of homogeneous distances. Theorem~\ref{thm:intro-more-nec-cdt} follows in particular from these improvements.

\section{Preliminaries} \label{sect:preliminaries}

We recall from the introduction that we identify the first Heisenberg group $\HH$ with $\R^2\times \R$ equipped with the group law given by
\begin{equation*}
(v,z)\cdot(v',z'):=\left(v+v',z+z' + \frac{1}{2}\omega(v,v')\right)
\end{equation*}
where
\begin{equation*}
\omega(v,v') := v_1 v'_2 - v_2 v'_1
\end{equation*}
for $v=(v_1,v_2), v'=(v'_1,v'_2) \in\R^2$. The identity element coincides with the origin denoted by $0=(0,0)$ and, for $p\in\HH$, we have $p^{-1} = -p$.

\medskip

We equip $\HH$ with the family of dilations $(\delta_t)_{t\geq 0}$ given by
\begin{equation*}
\delta_t(v,z) := (tv,t^2z).
\end{equation*}
This family of dilations is a one parameter group of group automorphisms in $\HH$.

\begin{definition} [Homogeneous distances] \label{def:homogeneous-distances}
A distance $d$ on $\HH$ is said to be \textit{homogeneous} if it is left-invariant, that is, $d(p\cdot q,p\cdot q')  = d(q,q')$ for all $p, q, q' \in \HH$, and one-homogeneous with respect to the family of dilations $(\delta_t)_{t>0}$, that is, $d(\delta_t(p), \delta_t(q)) = t d(p,q)$ for all $t>0$ and all $p, q \in \HH$.
\end{definition}

We recall that a homogeneous distance induces the Euclidean topology on $\HH$, see~\cite[Proposition~2.26]{LeDonne_Rigot_graded_groups_BCP}. Given a homogeneous distance $d$ on $\HH$, we denote by $B:=\{p\in\HH :\, d(0,p)\leq 1\}$ its unit ball centered at the origin. We define $\pi:\HH \rightarrow \R^2$ by $\pi(v,z) := v$. 

\medskip

The following description of unit balls centered at the origin for homogeneous distances on $\HH$ will play a central role in the present paper.

\begin{proposition} \label{prop:LDNG} Let $d$ be a homogeneous distance on $\HH$ and let $K:=\pi(B)$. Then $K$ is a compact convex set such that $0\in \interior(K)$ and $K$ is symmetric, that is, $K=-K$, and there is a bounded function $\phi: K \rightarrow [0,+\infty)$ locally Lipschitz on $\interior(K)$ such that 
$$B = \{(v,z) \in \HH :\, v\in K,\ -\phi(-v)\le z\le \phi(v) \}~.$$ Furthermore,
\begin{equation} \label{e:positivity-profile}
\phi(v) >0 \quad \text{for all } v\in \interior(K)
\end{equation}
and  
\begin{equation} \label{e:radial-continuity-phi}
\lim_{t\rightarrow 1^{-}} \phi(tv) = \phi(v) \quad \text{for all } v\in \partial K.
\end{equation}
\end{proposition}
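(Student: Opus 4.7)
The proposition has three pieces: (a) the structural description of $K$ and $B$ (compactness, convexity, symmetry, $0 \in \interior(K)$, and the sandwich representation with $\phi$ bounded and locally Lipschitz on $\interior(K)$), (b) the positivity \eqref{e:positivity-profile}, and (c) the radial boundary continuity \eqref{e:radial-continuity-phi}. Part (a) is essentially the content of \cite{LeDonne_Nicolussi_regularity_spheres}, which I would quote rather than reprove; the refinements (b) and (c) are the new contribution, and both follow quickly from two elementary ingredients---the coincidence of the $d$-topology with the Euclidean topology (\cite[Proposition~2.26]{LeDonne_Rigot_graded_groups_BCP}) and the one-homogeneity of $d$.

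\textbf{Proof of \eqref{e:positivity-profile}.} I would reduce it to the claim that $(v,0)$ lies in the $d$-open unit ball for every $v \in \interior(K)$. Fixing such a $v$, convexity of $K$ together with $0 \in \interior(K)$ yields a unique $s_\ast > 1$ with $v^\ast := s_\ast v \in \partial K$; setting $t := 1/s_\ast \in (0,1)$, the identity $\delta_t(v^\ast,0) = (v,0)$ and one-homogeneity give
\begin{equation*}
d(0,(v,0)) \;=\; t\, d(0,(v^\ast,0)) \;\le\; t \;<\; 1,
\end{equation*}
because $(v^\ast,0) \in B$ (since $v^\ast \in K$ and $0 \in [-\phi(-v^\ast),\phi(v^\ast)]$). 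Hence $(v,0)$ lies in the $d$-open unit ball, which is a Euclidean neighbourhood of $(v,0)$, so $(v,\epsilon) \in B$ for some $\epsilon > 0$, giving $\phi(v) \ge \epsilon > 0$.

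\textbf{Proof of \eqref{e:radial-continuity-phi}.} Fix $v \in \partial K$. I would establish two one-sided bounds. For the $\limsup$: closedness of $B$ makes $\phi$ upper semicontinuous on $K$---if $v_n \to v$ and $\phi(v_n) \to L$, then $(v_n,\phi(v_n)) \in B$ converges to $(v,L) \in B$, so $L \le \phi(v)$---which yields $\limsup_{t \uparrow 1} \phi(tv) \le \phi(v)$. For the $\liminf$: the inclusion $\delta_s(B) \subset B$ for $s \in [0,1]$, applied to $(v,\phi(v)) \in B$, gives $(sv, s^2\phi(v)) = \delta_s(v,\phi(v)) \in B$, hence $\phi(sv) \ge s^2 \phi(v)$, and letting $s \uparrow 1$ produces $\liminf_{s \uparrow 1} \phi(sv) \ge \phi(v)$. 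Combining the two bounds yields \eqref{e:radial-continuity-phi}.

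\textbf{Main obstacle.} The refinements (b) and (c) each take only a few lines, so no real difficulty is expected there. The genuinely nontrivial content of the proposition is hidden in part (a), specifically the claim that each vertical slice $B \cap \pi^{-1}(v)$ is an interval (rather than merely a compact subset of $\R$); this underlies the sandwich description and depends on features specific to the structure of homogeneous distances on $\HH$, going beyond what can be extracted from purely metric generalities or the coincidence of topologies. It is precisely this point that is handled by \cite{LeDonne_Nicolussi_regularity_spheres}, which I would therefore invoke rather than reprove.
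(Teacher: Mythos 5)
Your proof is correct, and parts (a) and (c) match the paper's approach exactly (the paper likewise invokes \cite[Proposition 6.2]{LeDonne_Nicolussi_regularity_spheres} for the structural description, proves the $\liminf$ bound by pushing $(v,\phi(v))$ down with $\delta_t$, and the $\limsup$ bound via upper semicontinuity of $\phi$ from closedness of $B$). The interesting divergence is in part (b). Your argument: for $v\in\interior(K)\setminus\{0\}$, shrink from the boundary point $v^\ast = s_\ast v \in\partial K$ to get $d(0,(v,0)) \le 1/s_\ast < 1$, then invoke the coincidence of the $d$-topology with the Euclidean topology to push $(v,0)$ up slightly and stay in $B$, giving $\phi(v)>0$. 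The paper instead proceeds algebraically: assuming $v=(x,0)$ with $x>0$, it picks $h>0$ with $(x,\pm h)\in\interior(K)$, notes $(x,\pm h,0)\in B$, and forms the convex combination $\delta_{1/2}(x,-h,0)\cdot\delta_{1/2}(x,h,0) = (x,0,hx/4)\in B$, concluding $\phi(v)\ge hx/4>0$. Both are valid. Your route is shorter and conceptually transparent, at the cost of invoking the topological coincidence result (legitimate, since the paper cites it anyway in the preliminaries); the paper's route stays entirely within the group-law and dilation structure and needs no topological input. One small omission: your argument for (b) tacitly assumes $v\neq 0$ (otherwise $s_\ast v \in\partial K$ fails); the case $v=0$ is trivial since $(0,0)$ is certainly in the open ball, but it is worth a word, as the paper handles it separately before treating $v\neq 0$.
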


We call the function $\phi: K \rightarrow [0,+\infty)$ given by Proposition~\ref{prop:LDNG} the \textit{profile of the homogeneous distance} $d$. 

\medskip

The first part of Proposition~\ref{prop:LDNG} follows from~\cite[Proposition 6.2]{LeDonne_Nicolussi_regularity_spheres}. To prove~\eqref{e:positivity-profile}, we first note that, since $0 \in \interior(B)$, there is $z>0$ such that $(0,z) \in B$ and hence $\phi(0) \geq z >0$. Next, let $v \in \interior(K) \setminus \{0\}$. Assume with no loss of generality that $v=(x,0)$ with $x>0$. Since $v\in \interior (K)$, there is $h>0$ such that $v_h:=(x,h), v_{-h}:=(x,-h) \in  \interior(K)$, and hence $(v_h,0), (v_{-h},0) \in B$. By left-invariance and homogeneity of the distance, it follows that $\delta_t (v_{-h},0) \cdot \delta_{1-t} (v_h,0) = (x,(1-2t)h, t(1-t)hx) \in B$ for all $t\in [-1,1]$. For $t=1/2$, we get $(v,hx/4) \in B$, and hence $\phi(v) \geq hx/4 >0$. To prove~\eqref{e:radial-continuity-phi}, let $v\in \partial K$. Then $(v,\phi(v)) \in B$ hence $\delta_t(v,\phi(v)) = (tv,t^2\phi(v)) \in B$ for all $t\in [0,1]$ by homogeneity of the distance. This implies that $t^2\phi(v) \leq \phi(tv)$, and hence $\phi(v) \leq \liminf_{t\rightarrow 1^{-}} \phi(tv)$. Conversely, since $B$ is closed, $\phi$ is upper semicontinuous on $K$. In particular, it follows that, for  $v\in \partial K$, we have $\limsup_{t\rightarrow 1^{-}}\phi(tv) \leq \phi(v)$.

\medskip

Note that a compact convex and symmetric set $K\subset \R^2$ contains $0$ in its interior if and only if $K$ is non empty with $K=\overline{\interior(K)}$. Furthermore, note that Proposition~\ref{prop:LDNG} implies that
$$\interior(B) = \{ (v,z) \in \HH:\, v\in \interior(K), -\phi(-v)< z< \phi(v) \}$$
and 
\begin{equation*}
\begin{split}
\partial B = \{ & (v,z) \in \HH:\, v\in \interior(K),\ z= \phi(v) \text{ or } z= -\phi(-v) \} \\
&\, \cup \{ (v,z) \in \HH:\, v\in \partial K,\ -\phi(-v)\le z\le \phi(v)\}~. 
\end{split}
\end{equation*}

\medskip

Next, note that if $d$ is a rotationally invariant homogeneous distance with radial profile $\varphi:[-r_d,r_d] \rightarrow [0,+\infty)$ (see the paragraph before Theorem~\ref{thm:intro-characterization-rot-inv-dist}) then $K=\pi(B)$ is a closed Euclidean disc with radius $r_d$ centered at the origin and, if $\phi:K\rightarrow[0,+\infty)$ denotes its profile, we have $\phi(v) = \varphi(\|v\|)$ for every $v \in K$. Hence~\eqref{e:radial-continuity-phi} together with the fact that $\varphi \in \Lip_{loc}((-r_d,r_d))$ imply that the radial profile is continuous on the closed segment $[-r_d,r_d]$, and hence its profile $\phi$ is continous on the closed set $K$. 

\medskip

However, we stress that the profile $\phi: K \rightarrow [0,+\infty)$ of a non-rotationally invariant homogeneous distance $d$ on $\HH$, although continuous on $\interior(K)$ and satisfying~\eqref{e:radial-continuity-phi}, need not be continuous on $K$ as shows Example~\ref{ex:non-cont-profile} below. We first give a criterion that might have its own interest and gives a large class of examples of profiles of homogeneous distances on $\HH$.

\begin{proposition} \label{prop:concave-profile}
Let $K\subset \R^2$ be a compact convex and symmetric set containing $0$ in its interior. Let $\phi: K \rightarrow [\diam(K)^2/16,+\infty)$ and assume that $\phi$ is concave on $\interior (K)$ and upper semicontinuous on $K$. Then $\phi$ is the profile of a homogeneous distance on $\HH$. Namely, setting 
$$B:=\{(v,z)\in\HH :\, v \in K, \, -\phi(-v) \leq z \leq \phi(v)\},$$ then
$$d(p,q) :=\inf \{t>0:\, \delta_{1/t} (p^{-1}\cdot q) \in B\}$$ defines a homogeneous distance on $\HH$ whose unit ball centered at the origin is the set $B$.
\end{proposition}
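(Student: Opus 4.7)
The plan is to show that $B$ serves as the closed unit ball of the homogeneous distance built as the gauge $N_B(p) := \inf\{t > 0 : p \in \delta_t B\}$, so that $d(p,q) := N_B(p^{-1}\cdot q)$ becomes the desired distance and $\phi$ its profile. First I would dispatch the elementary checks on $B$. The set $B$ is closed by upper semicontinuity of $\phi$ (the hypographs $\{z \le \phi(v)\}$ and $\{z \ge -\phi(-v)\}$ of usc functions are closed) and bounded since $K$ is compact and $\phi$ is bounded above on $K$; hence $B$ is compact. It contains $0$ in its interior because $0 \in \interior(K)$ and $\phi \ge m := \diam(K)^2/16 > 0$. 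Finally $B = B^{-1}$, i.e.\ $B = -B$, is a direct check using $K = -K$ and the symmetric role of $\phi(v)$ and $\phi(-v)$ in the inequalities defining $B$.

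The substance of the proof is to establish the product inclusion $\delta_\lambda B \cdot \delta_{1-\lambda} B \subset B$ for every $\lambda \in [0,1]$, which will encode the triangle inequality for the gauge. Given $(v,z),(v',z') \in B$, the Heisenberg group law gives
\[
\delta_\lambda(v,z) \cdot \delta_{1-\lambda}(v',z') = (v_\lambda, \zeta_\lambda),
\]
where $v_\lambda := \lambda v + (1-\lambda) v' \in K$ (by convexity of $K$) and $\zeta_\lambda := \lambda^2 z + (1-\lambda)^2 z' + \tfrac{1}{2}\lambda(1-\lambda)\,\omega(v,v')$. Using $z \le \phi(v)$, $z' \le \phi(v')$, the concavity estimate $\phi(v_\lambda) \ge \lambda \phi(v) + (1-\lambda)\phi(v')$, and the algebraic identities $\lambda^2 + \lambda(1-\lambda) = \lambda$ and $(1-\lambda)^2 + \lambda(1-\lambda) = 1-\lambda$, the desired upper bound $\zeta_\lambda \le \phi(v_\lambda)$ reduces to $\tfrac{1}{2}\,\omega(v,v') \le \phi(v) + \phi(v')$. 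This is precisely where the lower bound on $\phi$ is essential: from $K = -K$ we have $\|v\|, \|v'\| \le \diam(K)/2$, whence $|\omega(v,v')|/2 \le \diam(K)^2/8 = 2m \le \phi(v)+\phi(v')$. The symmetric lower bound $\zeta_\lambda \ge -\phi(-v_\lambda)$ follows either by the same calculation applied to $\phi(-\cdot)$, or directly from $B = -B$.

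The dilation-star-shape $\delta_t B \subset B$ for $t \in [0,1]$ then drops out immediately by inserting the trivial element $0 \in B$:
\[
\delta_t B = \delta_t B \cdot \{0\} \subset \delta_t B \cdot \delta_{1-t} B \subset B.
\]
A standard gauge computation then shows that $N_B$ is a homogeneous norm: it is definite because $B$ is bounded, homogeneous by construction, symmetric because $B = B^{-1}$, and subadditive from the product inclusion upgraded to $\delta_s B \cdot \delta_t B \subset \delta_{s+t} B$ (via homogeneity of dilations). The equality $\{N_B \le 1\} = B$ follows from the dilation-star-shape combined with closedness of $B$. Consequently $d(p,q) := N_B(p^{-1}\cdot q)$ is a left-invariant homogeneous distance on $\HH$ whose closed unit ball at the origin is $B$, and $\phi$ is its profile.

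The main technical obstacle is the concavity estimate $\phi(v_\lambda) \ge \lambda \phi(v) + (1-\lambda) \phi(v')$ when some of $v, v', v_\lambda$ lie on $\partial K$, since concavity is stated on $\interior(K)$. I would handle this by approximating $v, v'$ from inside as $(1-\varepsilon)v, (1-\varepsilon)v' \in \interior(K)$, applying concavity to these approximations (whose convex combination again lies in $\interior(K)$ by convexity of $\interior(K)$), and passing to the limit $\varepsilon \to 0$ using upper semicontinuity together with the radial regularity of $\phi$ at the boundary forced by concavity (the radial limit of $\phi$ at a boundary point is at least its value there via concavity, while upper semicontinuity gives the reverse inequality).
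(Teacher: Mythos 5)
Your overall strategy — build the gauge $N_B$ from the set $B$, reduce to the product inclusion $\delta_\lambda B\cdot\delta_{1-\lambda}B\subset B$, and then verify that inclusion via the group law and concavity — is the same as the paper's, and your algebraic core is actually cleaner. Where the paper introduces the auxiliary quantities $A=4\phi(v)/\diam(K)^2$, $B=4\phi(\bar v)/\diam(K)^2\ge 1/4$, invokes $\tfrac12\le 2\sqrt{AB}$, rewrites the upper bound as $(t\sqrt A+(1-t)\sqrt B)^2$, and then uses convexity of that square, you notice that the identities $\lambda^2+\lambda(1-\lambda)=\lambda$ and $(1-\lambda)^2+\lambda(1-\lambda)=1-\lambda$ together with concavity reduce everything to the single inequality $\tfrac12\,\omega(v,v')\le\phi(v)+\phi(v')$, which follows directly from $|\omega(v,v')|\le\|v\|\,\|v'\|\le\diam(K)^2/4$ and $\phi\ge\diam(K)^2/16$. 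That streamlining is a genuine improvement; one can check it is equivalent to the paper's inequality after factoring $t(1-t)$ out of $tA+(1-t)B-t^2A-(1-t)^2B-\tfrac12 t(1-t)=t(1-t)(A+B-\tfrac12)$.

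The gap is in your last paragraph. The claim that ``the radial limit of $\phi$ at a boundary point is at least its value there via concavity'' is not correct: concavity of $\phi$ on $\interior(K)$ imposes no constraint whatsoever on the values $\phi(v)$ for $v\in\partial K$, and upper semicontinuity only gives $\limsup_{t\to1^-}\phi(tv)\le\phi(v)$, not the reverse. Nothing in the stated hypotheses prevents $\phi$ from jumping \emph{up} at $\partial K$. For instance, with $K$ the unit disc, $\phi(v)=2-\|v\|^2$ on $\interior(K)$ and $\phi(v)=10$ on $\partial K$ is concave on the interior, u.s.c.\ on $K$, and bounded below by $\diam(K)^2/16=1/4$; yet for $p=(v,10)$, $q=(-v,10)$ with $\|v\|=1$ one has $\delta_{1/2}(p)\cdot\delta_{1/2}(q)=(0,5)\notin B$ since $\phi(0)=2$. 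So your passage to the limit $\varepsilon\to0$ cannot be salvaged from the hypotheses as written. For what it is worth, the paper's own proof sidesteps the explicit concavity-at-the-boundary computation by working with $B_+\cap B_-$ (the part of $B$ over $\interior(K)$) and asserting that $B$ is its closure — but that closure identity is precisely equivalent to the radial continuity $\lim_{t\to1^-}\phi(tv)=\phi(v)$ on $\partial K$ which your argument also needs, and neither proof derives it from ``concave on $\interior(K)$ plus u.s.c.'' If you want a complete argument you should either add radial continuity at $\partial K$ as a hypothesis, or follow the paper's route of proving the inclusion on $B_+\cap B_-$ and then invoking $B=\overline{B_+\cap B_-}$, with that identity established separately.
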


\begin{proof} 
To prove that such a function is the profile of a homogeneous distance on $\HH$, we need to prove that the set $B$ is a compact symmetric set that contains $0$ in its interior and that
\begin{equation} \label{e:triangular-inequality}
\delta_t (p) \cdot \delta_{1-t} (q) \in B
\end{equation}
for all $p, q \in B$ and all $t\in [0,1]$ (see~\cite[Example~2.37]{LeDonne_Rigot_graded_groups_BCP}). By positivity and upper semicontinuity of $\phi$ on the compact set $K$, the set $B$ is bounded and closed, hence compact. The facts that $B$ is symmetric, that is, $p^{-1}=-p \in B$ for every $p\in B$, and contains $0$ in its interior are straightforward. 

To prove~\eqref{e:triangular-inequality}, we note that $B$ is the closure of the set $$\{(v,z)\in\HH :\, v \in \interior(K), \, -\phi(-v) \leq z \leq \phi(v)\} = B_+ \cap B_-$$
where 
\begin{align*}
	B_+ &:=  \{(v,z) \in \HH:\, v \in \interior(K), \,  z\le \phi(v) \} ,\\
	B_- &:=  \{(v,z): v \in \interior(K),\,  -\phi(-v)\le z \} .
	\end{align*}
Hence it is sufficient to prove that $\delta_t (p) \cdot \delta_{1-t} (q) \in B_+$ for every $p, q \in B_+$ and $t\in [0,1]$, together with the similar statement replacing $B_+$ by $B_-$. We consider the case $p, q \in B_+$, the other case being similar. Let $p=(v,z)\in B_+$ and $q=(\bar v,\bar z)\in B_+$.
	Then
	\[
\delta_t (p) \cdot \delta_{1-t} (q)	
	= \left( tv + (1-t)\bar v , t^2z + (1-t)^2\bar z + \frac12 t(1-t) \omega(v,\bar v) \right) .
	\]
	Next, by definition of $B_+$ and since $\|v\|,\|v'\| \leq \diam(K)/2$, we have
	\begin{align*}
	t^2z + (1-t)^2\bar z + \frac12 t(1&-t) \omega(v,\bar v) \\
	&\le t^2 \phi(v) + (1-t)^2\phi(\bar v) + \frac18 t(1-t) \diam(K)^2 .
	\end{align*}
	Now, we conclude the proof with the following observation. For $A,B\ge 1/4$, we have 
	\begin{equation*}
	t^2A+(1-t)^2B+\frac12 t(1-t) \le tA+(1-t)B 
	\end{equation*}
	for every $t\in [0,1]$. Indeed, since $A,B\ge 1/4$, then $1/2 \le 2 \sqrt{AB}$.
	Therefore, 
	\begin{align*}
	t^2A+(1-t)^2B+\frac12 t(1-t)
	&\le t^2A+(1-t)^2B+ 2 t(1-t) \sqrt{AB} \\
	&= (t\sqrt{A}+(1-t)\sqrt{B})^2.
	\end{align*}
	Since the function $g:[0,1] \rightarrow\R$ given by $g(t):= (t\sqrt{A}+(1-t)\sqrt{B})^2$ is convex, we get $g(t)\le tg(1)+(1-t)g(0)= tA+(1-t)B$ for all $t\in[0,1]$. Applying this observation with $A=4 \phi(v) / \diam(K)^2$ and $B=4 \phi(\bar v) / \diam(K)^2$ and using the concavity of $\phi$ completes the proof.
\end{proof}

We give in the next example an application of Proposition~\ref{prop:concave-profile} that gives a profile $\phi: K \rightarrow [0,+\infty)$ of a homogeneous distance on $\HH$ 
that is not continuous on $K$.

\begin{example} \label{ex:non-cont-profile} Let $\bb D :=\{v\in\R^2: \|v\| \leq 1\}$  denote the closed unit disc in $\R^2$ and let $\phi_1:\bb D\rightarrow [1/4,+\infty)$ be defined by
\begin{equation*} 
	\phi_1(x,y): = \frac14 + 1 - \frac{y^2}{1-x^2} 
\end{equation*}
for $(x,y) \in \bb D \setminus \{(-1,0),(1,0)\}$ and 
\begin{equation*} 
\phi_1(-1,0) := \frac{5}{4}, \quad \phi_1 (1,0) := \frac{5}{4}.
\end{equation*}
Then $\phi_1:\bb D\rightarrow [1/4,+\infty)$ is the profile of a homogeneous distance on $\HH$ but $\phi_1$ is not continuous on $\bb D$.

Indeed, the function $\phi_1$ is clearly continuous on $\bb D\setminus \{(-1,0),(1,0)\}$. Since $\phi_1(v) \leq 5/4$ for every $v\in \bb D$, $\phi_1$ is also clearly upper semicontinuous on $\bb D$. However, $\phi_1(x,y) = 1/4$ for every $(x,y) \in \bb S^1 \setminus \{(-1,0),(1,0)\}$, hence $\phi_1$ is not continuous at $(-1,0)$ and $(1,0)$. The fact that $\phi_1$ is the profile of a homogeneous distance will now follow from Proposition~\ref{prop:concave-profile} if we show that $\phi_1$ is concave on $\interior (\bb D)$. To this aim, we set $$f(x,y):=\frac{y^2}{1-x^2}$$ and we prove that $f$ is convex on $\interior (\bb D)$. We have
	\[
	\frac{\de^2 f}{\de x^2} = 2y^2 \frac{1+3x^2}{(1-x^2)^3} ,
	\qquad
	\frac{\de^2 f}{\de x\de y} = \frac{4xy}{(1-x^2)^2} ,
	\qquad
	\frac{\de^2 f}{\de y^2} = \frac{2}{1-x^2} .
	\]
	Hence the determinant and the trace of the Hessian $H_f$ of $f$ are given by $$\det(H_f) = \frac{4y^2}{(1-x^2)^3} \quad \text{and} \quad \operatorname{trace} (H_f) = 2y^2 \frac{1+3x^2}{(1-x^2)^3} +\frac{2}{1-x^2}$$  that are both non negative on $\interior (\bb D)$. Hence $f$ is convex on $\interior (\bb D)$ as claimed.
\end{example}

Note that it will follow from the necessary conditions proved in Section~\ref{sect:necessary-conditions} that the homogeneous distance with profile $\phi_1$ does not satisfy BCP, see Remark~\ref{rk:nobcp-non-cont-profile}. We refer to Example~\ref{ex:non-cont-profile-with-bcp} for another example of a homogeneous distance with non-continuous profile that satisfy BCP.

\medskip

We end this section with conventions and notations that will be used in the rest of the paper. We denote by the same notations, namely, $\langle \cdot ,\cdot \rangle$ and $\|\cdot\|$, the Euclidean scalar product and Euclidean norm on $\R^2$ or $\R^3$. For $n=2,3$, and $v, w\in\R^n \setminus\{0\}$, we denote by $\measuredangle (v,w)$ the unoriented angle between $v$ and $w$ given by
\begin{equation*}
\measuredangle (v,w) := \arccos \frac{\langle v , w \rangle}{\|v\| \|w\|} \in [0,\pi]~.
\end{equation*}

Given $v, w \in \R^2\setminus\{0\}$, the oriented angle $\angle (v,w) \in (-\pi, \pi)$ between $v$ and $w$ is well defined whenever $w\notin \R_{<0} \,v$ and is given by
\begin{align*}
\angle (v,w): =
\begin{cases}
\sgn (\omega(v,w)) \, \measuredangle (v,w) &\, \text{if } \omega(v,w) \not= 0~,\\
0 &\, \text{if } \omega(v,w) = 0 \text{ and }\langle v , w \rangle >0 ~.
\end{cases}
\end{align*}

We also recall that, given any two vectors $v, w \in \R^2\setminus\{0\}$ with $w\notin \R_{<0} \,v$, we have 
\begin{equation*} \label{e:omega-vs-sin}
\omega(v,w)  = \|v\| \|w\| \sin \angle (v,w).
\end{equation*}

We denote by $\bb S^1$ the Euclidean unit circle in $\R^2$.


\section{Sufficient conditions} 
\label{sect:sufficient-cdt}

This section is devoted to the proof and applications of Theorem~\ref{thm:intro-sufficient-cdt}, restated below for the reader convenience.  We refer to Proposition~\ref{prop:LDNG} for the definition and general properties of the profile of a homogeneous distance on $\HH$.

\begin{theorem} \label{thm:sufficient-conditions-bcp}
Let $d$ be a homogeneous distance on $\HH$ with profile $\phi:K\rightarrow [0,+ \infty)$. Assume that
\begin{equation}\label{e:phi-2} 
\begin{split}
\exists\,  m> & 0,\, \exists\,  \alpha \in (0,\pi / 2), \text{ for a.e.~} v\in \interior(K)\setminus\{0\}, \, \forall w\in \R^2\setminus\{0\},\\
& \measuredangle (v,w) \leq \alpha \Rightarrow \langle \nabla\phi(v) , w \rangle \leq -m \|v\| \|w\|, 
 \end{split}
 \end{equation}
and
\begin{equation} \label{e:phi-3}
\begin{split}
\exists\, \kappa> & 0,\, \exists\, M>0,\; \{v\in\R^2 : \|v\|\leq \kappa\} \subset \interior(K) \text{ and } \\  
&\|\nabla\phi(v)\| \leq M \|v\| \text{ for a.e.~} v\in \R^2 \text{ such that }\|v\|\leq \kappa.
\end{split}
 \end{equation}
Then BCP holds on $(\HH,d)$.
\end{theorem}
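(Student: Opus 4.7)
My plan is to deduce Theorem~\ref{thm:sufficient-conditions-bcp} from the abstract sufficient condition of Section~\ref{subsect:abstract-sufficient-cdt}, namely Theorem~\ref{thm:abstract-sufficient-cdt} and its Corollary~\ref{cor:abstract-sufficient-cdt}. That abstract condition will be formulated directly in terms of the unit ball $B$, asking, roughly, that at every boundary point of $B$ one can inscribe in $\interior(B)$ a Heisenberg cone with uniform aperture and depth, in a scale-invariant fashion. Using the explicit description $B = \{(v,z) : v\in K,\ -\phi(-v)\le z\le \phi(v)\}$ provided by Proposition~\ref{prop:LDNG}, the verification of such an inscribed cone reduces to a quantitative one-sided estimate on the profile $\phi$. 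The program is therefore to rewrite the abstract condition as a condition on $\phi$, split $\partial B$ into its upper graph piece $\{z=\phi(v)\}$, its lower graph piece $\{z=-\phi(-v)\}$ and its lateral piece $\{v\in\partial K\}$, and verify the profile condition on each piece using the two hypotheses~(\ref{e:phi-2}) and~(\ref{e:phi-3}).

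At a point $(v,\phi(v))$ on the upper graph with $v\in\interior(K)\setminus\{0\}$ bounded away from $0$, hypothesis~(\ref{e:phi-2}) says that at a.e.~such $v$ the gradient $\nabla\phi(v)$ dominates with strength $m\|v\|$ against every test direction $w$ with $\measuredangle(v,w)\le\alpha$. A first order expansion of $\phi$ then shows that the tangent plane to the graph at $(v,\phi(v))$ tilts downward by a definite amount in a whole horizontal cone, which translates into the existence of a Heisenberg cone of aperture controlled by $\alpha$ and depth controlled by $m$ inside $B$ rooted at $(v,\phi(v))$. The symmetric case of the lower graph follows at once from the central symmetry $B=-B$. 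As $\|v\|\downarrow 0$ the depth $m\|v\|$ coming from~(\ref{e:phi-2}) degenerates, and the role of the hypothesis is taken over by~(\ref{e:phi-3}): integrating $\|\nabla\phi(v)\|\le M\|v\|$ along the radial segment from $0$ to $v$ yields the quadratic sandwich $\phi(0)-(M/2)\|v\|^2 \le \phi(v)\le\phi(0)$ on $\{\|v\|\le\kappa\}$. This parabolic flatness of the upper graph at the apex $(0,\phi(0))$ ensures that a purely vertical displacement of fixed size remains deeply inside $B$ when issued from any boundary point with $\|v\|$ small, and hence produces an inward Heisenberg cone of uniform size there. The lateral piece $\{v\in\partial K\}$ is handled by the convexity and symmetry of $K$ stated in Proposition~\ref{prop:LDNG}: since $K$ contains a Euclidean disc around $0$, every point of $\partial K$ admits a uniform horizontal inward cone pointing into $\interior(K)$, and combined with the upper semicontinuity of $\phi$ this gives the required Heisenberg cone.

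The main obstacle will be to patch the "tilted" cones coming from~(\ref{e:phi-2}) with the "vertical" cones coming from~(\ref{e:phi-3}) into the single uniform cone structure demanded by Corollary~\ref{cor:abstract-sufficient-cdt}. I would use the overlap region $\kappa/2\le\|v\|\le\kappa$, where both hypotheses apply simultaneously, to extract matching uniform constants for aperture and depth. A secondary technical point is that~(\ref{e:phi-2}) is only an a.e.~statement, which is natural since the profile is only locally Lipschitz on $\interior(K)$. Passing from the pointwise gradient inequality to an actual inclusion of Heisenberg cones in $B$ at every boundary point will therefore require a density/integration argument: the a.e.~inequality, integrated along short Lipschitz curves inside $\interior(K)$, yields the needed inequality on $\phi$ everywhere along the graph, which is what the abstract criterion ultimately consumes.
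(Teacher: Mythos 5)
Your high-level strategy is right: deduce the theorem from Corollary~\ref{cor:abstract-sufficient-cdt}, use \eqref{e:phi-2} to control the profile away from the origin, use \eqref{e:phi-3} to control it near the origin, and patch the two regimes. This is indeed the overall shape of the paper's proof. However, your paraphrase of Corollary~\ref{cor:abstract-sufficient-cdt} as an ``inscribed Heisenberg cone at each boundary point'' condition is a mischaracterization that infects the rest of the plan. What Corollary~\ref{cor:abstract-sufficient-cdt} actually demands is a two-point stability statement: for each pair of boundary points $p=(v_0,z_0)$ and $q=(v,z)$ that are close in angular coordinate and in $z$-coordinate, the whole arc $t\mapsto \delta_t(q)$, $t\in[0,\epsilon]$, stays in $B(p,1)$. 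After left translation this becomes the requirement that the curve $t\mapsto p^{-1}\cdot\delta_t(q)$ lies in $B$, a curve that starts at $p^{-1}\in\partial B$ (not at $p$ or $q$, so the vertex in your picture is misplaced), whose horizontal trace is $\gamma(t)=tv-v_0$, and whose vertical component involves the noncommutative correction $\tfrac{t}{2}\omega(v_0,v)$ competing with the $t^2 z$ term. Verifying this is not a local tangent-plane computation at $(v,\phi(v))$; it requires tracking $\phi$ along the moving point $\gamma(t)$ \emph{and} along $-\gamma(t)$, and balancing those against the linear-in-$t$ and quadratic-in-$t$ contributions coming from the group law. The ``aperture and depth'' of your cone do not encode this interaction.

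Consequently, most of the real work of the proof is not present in your proposal. In the paper this is carried by Lemma~\ref{lem:sufficient-for-bcp}, which reduces the membership $p_0^{-1}\cdot\delta_t(q)\in B$ (and, because of noncommutativity, also $\delta_t(q)\cdot p_0^{-1}\in B$, needed to cover $z_0<0$ --- an issue your plan does not address) to three explicit inequalities, and establishes them via the chain Lemma~\ref{lem:prop-of-phi} through Lemma~\ref{lem:g-2}. Those lemmas use \eqref{e:phi-2} and \eqref{e:phi-3} in ways quite different from checking cone inclusion: an integral estimate giving a quantitative drop $\phi(w)-\phi(v)\ge\epsilon$ under angular and radial separation, the bound $|\|v_0\|-\|v\||\lesssim\sqrt{\epsilon}$, a monotonicity argument for an auxiliary function $f$, and --- the hardest step --- a sign analysis of $g'(t)$ involving a quadratic discriminant that only closes because $\eta=\min_{\|v\|\le\kappa}\phi(v)>0$ and $r\lesssim\sqrt{\epsilon}$. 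Your ``density/integration argument'' gestures at the a.e.\ issue, but the actual obstruction the paper handles with Lemma~\ref{lem:a.e.vs-every} is that the profile $\phi$ need not be continuous on all of $K$ (only on $\interior(K)$, with radial continuity up to $\partial K$), so boundary points over $\partial K$ have to be approximated by nearby ones over $\interior(K)$ along which the a.e.\ hypotheses can be applied; this is different from integrating the gradient bound. As written, the proposal names the right ingredients but stops short of the computation that actually turns \eqref{e:phi-2} and \eqref{e:phi-3} into the hypothesis of Corollary~\ref{cor:abstract-sufficient-cdt}.
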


We first give some applications of Theorem~\ref{thm:sufficient-conditions-bcp} in Section~\ref{subsect:applications}. We prove in Section~\ref{subsect:abstract-sufficient-cdt} an abstract sufficient condition, Theorem~\ref{thm:abstract-sufficient-cdt}, for the validity of BCP on $\HH$, or more generally on homogeneous groups, see Remark~\ref{rmk:homogeneous-groups}, equipped with a homogeneous distance. The proof of Theorem~\ref{thm:sufficient-conditions-bcp}, or equivalently of Theorem~\ref{thm:intro-sufficient-cdt}, will then be given in Section~\ref{subsect:proof-main-sufficient} and will be based on a corollary of this abstract sufficient condition, Corollary~\ref{cor:abstract-sufficient-cdt}.

\subsection{Applications and Examples} \label{subsect:applications}

First, combining Proposition~\ref{prop:concave-profile} and Theorem~\ref{thm:sufficient-conditions-bcp} gives the following class of homogeneous distances that satisfy BCP on $\HH$. Namely, if $K\subset \R^2$ is a compact convex and symmetric set containing $0$ in its interior and $\phi: K \rightarrow [\diam(K)^2/16,+\infty)$  is concave on $\interior (K)$, upper semicontinuous on $K$, and satisfies~\eqref{e:phi-2} and~\eqref{e:phi-3}, then $\phi$ is the profile of a homogeneous distance that satisfy BCP on $\HH$. We give below an example of such a function that also illustrates the fact that such profiles need not be continuous on $K$. 

\begin{example} \label{ex:non-cont-profile-with-bcp} Let $\phi_2:\bb D\to [1/4,+\infty)$ be defined by 
$$\phi_2(x,y) := \phi_1(x,y) + 1 - x^2 - y^2$$
where $\phi_1$ is given by Example~\ref{ex:non-cont-profile}. It follows from the definition of $\phi_1$ that $\phi_2$ is continuous on $\bb D\setminus \{(-1,0),(1,0)\}$ but not at $(-1,0)$ and $(1,0)$. We prove now that $\phi_2$ is the profile of a homogeneous distance that satisfies BCP. First, $\phi_2$ is the sum of $\phi_1$ with a non negative, continuous, and concave function on $\bb D$. It follows from Example~\ref{ex:non-cont-profile} that $\phi_2$ is concave on $\interior(\bb D)$ and upper semicontinuous on $\bb D$, hence $\phi_2$ is the profile of a homogeneous distance by Proposition~\ref{prop:concave-profile}. Next, $\phi_2\in C^2(\interior(\bb D))$ with $\nabla \phi (0) = 0$, hence~\eqref{e:phi-3} holds. By Theorem~\ref{thm:sufficient-conditions-bcp}, to prove that BCP holds for the homogeneous distance whose profile is given by $\phi_2$, it is thus sufficient prove that~\eqref{e:phi-2} holds. 

Let $v\in \interior(\bb D) \setminus \{0\}$. Using polar coordinates, we write $v:=(r\cos \theta,r\sin\theta)$ for some $r\in (0,1)$ and $\theta \in [0,2\pi]$ and we let $w:=(\cos (\theta + \alpha), \sin (\theta + \alpha)) \in \bb S^1$ with $\alpha \in (-\pi/2,\pi/2)$. We have
$$\langle \nabla\phi_2(v) , w \rangle = \cos\alpha  \frac{\de\phi_2}{\de r}(r,\theta) + \sin\alpha \, \frac{1}{r} \frac{\de\phi_2}{\de\theta}(r,\theta). $$
Hence, to prove that~\eqref{e:phi-2} holds, it is sufficient to find  $\alpha_0 \in (0,\pi/2)$, such that
\begin{equation} \label{e:bcp-phi2}
\cos\alpha  \, \frac{1}{r} \frac{\de\phi_2}{\de r}(r,\theta) + \sin\alpha \, \frac{1}{r^2} \frac{\de\phi_2}{\de\theta}(r,\theta) \leq -1
\end{equation}
for every $r\in (0,1)$, $\theta \in [0,2\pi]$, and $\alpha \in [-\alpha_0,\alpha_0]$. We have
\begin{equation} \label{e:Phi_2-r}
\frac{\de\phi_2}{\de r}(r,\theta) = 
		-2r \left(1 + \frac{ \sin^2\theta }{ (1-r^2\cos^2\theta)^2 }\right),
\end{equation}
hence,
\begin{equation*} 
 \frac{\de\phi_2}{\de r}(r,\theta) \leq -2r
\end{equation*}
for every  $r\in (0,1)$ and $\theta \in [0,2\pi]$. Next, we have
\begin{equation} \label{e:Phi_2-theta}
\frac{\de\phi_2}{\de\theta}(r,\theta) = 
		- 2 r^2 \frac{(1-r^2) \cos\theta\sin\theta }{ (1-r^2\cos^2\theta)^2 }.
\end{equation}		
For $r\in (0,1/2]$ and $\theta \in [0,2\pi]$,
$$\left|\frac{1}{r^2} \, \frac{\de\phi_2}{\de\theta}(r,\theta) \right| \leq \frac{32}{9}.$$
Hence, one can find $\alpha_0 \in (0,\pi/2)$ such that
\begin{equation*}
\begin{split}
\cos\alpha  \, \frac{1}{r} \frac{\de\phi_2}{\de r}(r,\theta) + \sin\alpha \, \frac{1}{r^2} \frac{\de\phi_2}{\de\theta}(r,\theta) &\leq -2 \cos\alpha + \frac{32}{9} |\sin \alpha| \leq -1
\end{split}
\end{equation*}
for every $r\in (0,1/2]$, $\theta \in [0,2\pi]$, and $\alpha \in [-\alpha_0,\alpha_0]$. Next, it follows from~\eqref{e:Phi_2-r} and~\eqref{e:Phi_2-theta} that, for $r\in (0,1)$ and $\theta\in [0,2\pi]$, we have 
\begin{equation*}
\begin{split}
\left|\frac{\de\phi_2}{\de\theta}(r,\theta)\right| \cdot \left|\frac{\de\phi_2}{\de r}(r,\theta)\right|^{-1} &= \frac{r(1-r^2)|\cos\theta\sin\theta|}{(1-r^2\cos^2\theta)^2 + \sin^2\theta}\\
&\leq \frac{(1-r^2)|\sin\theta|}{(1-r^2)^2 + \sin^2\theta} \leq \frac{1}{2}.
\end{split}
\end{equation*}
Hence, one can find $\alpha_0 \in (0,\pi/2)$ such that, for every $r\in (1/2,1)$, $\theta \in [0,2\pi]$, and $\alpha \in [-\alpha_0,\alpha_0]$, 
\begin{equation*}
\begin{split}
\cos\alpha  \, \frac{1}{r} \frac{\de\phi_2}{\de r}(r,\theta) + \sin\alpha \, \frac{1}{r^2} &\frac{\de\phi_2}{\de\theta}(r,\theta) \\
&\leq \cos\alpha  \, \frac{1}{r} \frac{\de\phi_2}{\de r}(r,\theta) + 2 |\sin\alpha|  \, \frac{1}{r}\left|\frac{\de\phi_2}{\de r}(r,\theta)\right| \\
 &= \left(\cos\alpha - 2 |\sin\alpha| \right)  \, \frac{1}{r}\frac{\de\phi_2}{\de r}(r,\theta)\\
 &\leq -2 \left(\cos\alpha - 2 |\sin\alpha| \right) \leq -1.
\end{split}
\end{equation*}
This completes the proof of~\eqref{e:bcp-phi2} and hence of the fact that BCP holds for the homogeneous distance with profile $\phi_2$.
\end{example}

Homogeneous distances that satisfy the assumptions of Theorem~\ref{thm:sufficient-conditions-bcp} can also be constructed as follows. Let $K\subset \R^2$ be a compact convex set such that $0\in\interior(K)$ and $K=-K$. Let $\psi\in \Lip(K)$ and assume that $\psi$ satisfies~\eqref{e:phi-2} and~\eqref{e:phi-3}. By~\cite[Proposition 6.3]{LeDonne_Nicolussi_regularity_spheres}, there is a constant $C\geq 0$ such that, for every $c\geq C$, the function $\psi + c :K \rightarrow [0,+\infty)$ defines the profile of a homogeneous distance on $\HH$. Since $\psi +c$ satisfies~\eqref{e:phi-2} and~\eqref{e:phi-3} as well, it follows from Theorem~\ref{thm:sufficient-conditions-bcp} that such homogeneous distances satisfy BCP on $\HH$. This allows in particular to contruct another large class of homogeneous distances on $\HH$ for which BCP holds.

\medskip

Next, for rotationally invariant homogeneous distances, Theorem~\ref{thm:sufficient-conditions-bcp} can be rephrased as follows. We refer to the paragraph before Theorem~\ref{thm:intro-characterization-rot-inv-dist} for the definition of rotationally invariant homogeneous distances and of their radial profile.

\begin{corollary} \label{cor:sufficiention-rot-inv}
Let $d$ be a rotationally invariant homogeneous distance on $\HH$ with radial profile $\varphi:[-r_d,r_d] \rightarrow [0,+\infty)$.  Assume that $|\varphi'(s) / s|$ is essentially bounded in a neighbourhood of $0$ and that there is $m>0$ such that $\varphi'(s) \leq -m s$ for a.e.~$s\in (0,r_d)$. Then BCP holds in $(\HH,d)$.
\end{corollary}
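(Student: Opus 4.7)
The plan is to reduce the statement to Theorem~\ref{thm:sufficient-conditions-bcp} by showing that the hypotheses on the radial profile $\varphi$ translate directly into the conditions~\eqref{e:phi-2} and~\eqref{e:phi-3} for the profile $\phi:K\to[0,+\infty)$ of $d$, where $K=\{v\in\R^2:\|v\|\leq r_d\}$ and $\phi(v)=\varphi(\|v\|)$. Since $\varphi$ is locally Lipschitz on $(-r_d,r_d)$, it is differentiable at a.e.~$s\in(-r_d,r_d)$ by Rademacher's theorem, and hence, by the chain rule, $\phi$ is differentiable at a.e.~$v\in\interior(K)\setminus\{0\}$ with
\[
\nabla\phi(v) \;=\; \varphi'(\|v\|)\,\frac{v}{\|v\|}.
\]

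To verify~\eqref{e:phi-2}, I would fix $\alpha:=\pi/4$ and show that $m':=m/\sqrt{2}$ works. For a.e.~$v\in\interior(K)\setminus\{0\}$ and any $w\in\R^2\setminus\{0\}$ with $\measuredangle(v,w)\leq\alpha$, the formula above gives
\[
\langle\nabla\phi(v),w\rangle \;=\; \varphi'(\|v\|)\,\|w\|\cos(\measuredangle(v,w)).
\]
The hypothesis $\varphi'(s)\leq-ms$ for a.e.~$s\in(0,r_d)$ yields $\varphi'(\|v\|)\leq-m\|v\|<0$ for a.e.~$v\in\interior(K)\setminus\{0\}$, and $\cos(\measuredangle(v,w))\geq\cos\alpha=1/\sqrt{2}>0$. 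Multiplying the negative quantity $\varphi'(\|v\|)\|w\|$ by $\cos(\measuredangle(v,w))\geq 1/\sqrt{2}$ preserves the inequality, giving $\langle\nabla\phi(v),w\rangle\leq-m'\|v\|\|w\|$, as required.

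For~\eqref{e:phi-3}, the essential boundedness of $|\varphi'(s)/s|$ in a neighbourhood of $0$ provides some $\kappa\in(0,r_d)$ and $M>0$ such that $|\varphi'(s)|\leq M|s|$ for a.e.~$s\in(-\kappa,\kappa)$. The disc $\{v\in\R^2:\|v\|\leq\kappa\}$ is contained in $\interior(K)$, and since $\|\nabla\phi(v)\|=|\varphi'(\|v\|)|$ for a.e.~$v$ with $0<\|v\|\leq\kappa$, the bound $\|\nabla\phi(v)\|\leq M\|v\|$ holds on a set of full measure inside the disc of radius $\kappa$ (the origin being a single point, it is negligible). Both~\eqref{e:phi-2} and~\eqref{e:phi-3} being satisfied, Theorem~\ref{thm:sufficient-conditions-bcp} applies and BCP holds in $(\HH,d)$.

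No step is really an obstacle here: the heavy lifting is entirely contained in Theorem~\ref{thm:sufficient-conditions-bcp}, and the mildest point to double-check is simply that the a.e.~differentiability passes correctly from $\varphi$ to $\phi$, which is immediate from the chain rule away from the origin together with the fact that the preimage under $v\mapsto\|v\|$ of a one-dimensional null set in $(0,r_d)$ is a two-dimensional null set in $\R^2$.
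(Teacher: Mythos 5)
Your proof is correct and follows essentially the same route as the paper: compute $\nabla\phi(v)=\varphi'(\|v\|)\,v/\|v\|$ a.e., take $\alpha=\pi/4$ to bound $\cos\measuredangle(v,w)\geq 1/\sqrt2$ and verify~\eqref{e:phi-2} with constant $m/\sqrt2$, then read off~\eqref{e:phi-3} from $\|\nabla\phi(v)\|=|\varphi'(\|v\|)|$ and the essential boundedness of $|\varphi'(s)/s|$, and conclude by Theorem~\ref{thm:sufficient-conditions-bcp}. The extra remark on how a.e.\ differentiability passes from $\varphi$ to $\phi$ (preimage of a one-dimensional null set under $v\mapsto\|v\|$) is a harmless elaboration of a point the paper leaves implicit.
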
 

\begin{proof}
Let $\phi: \pi(B) \rightarrow [0,+\infty)$ and $\varphi:[-r_d,r_d] \rightarrow [0,+\infty)$ denote respectively the profile and  radial profile of $d$, so that $\phi(v) = \varphi(\|v\|)$. We have
$$\nabla\phi(v) = \varphi'(\|v\|)\, \frac{v}{\|v\|}$$
for a.e.~$v \in \interior(\pi(B))  \setminus \{0\}$. It follows that, for a.e.~$v \in \interior(\pi(B)) \setminus \{0\}$ and every $w\in \R^2\setminus \{0\}$ such that $\measuredangle (v,w) \leq \pi /4$,
\begin{equation*}
\begin{split}
\langle \nabla\phi(v) , w \rangle &= \frac{\varphi'(\|v\|)}{\|v\|} \, \langle v,w \rangle \\
&=\varphi'(\|v\|) \, \|w\| \cos \measuredangle (v,w) \\
& \leq - \frac{\sqrt 2}{2} m \|v\| \|w\|~,
\end{split}
\end{equation*}
and hence~\eqref{e:phi-2} holds. Next,~\eqref{e:phi-3} is a straightforward consequence of the fact that $\|\nabla\phi(v)\| = |\varphi'(\|v\|)|$ for a.e.~$v \in \interior(\pi(B)) \setminus \{0\}$ together with the assumption that  $|\varphi'(s) / s|$ is essentially bounded in a neighbourhood of $0$. Hence BCP holds in $(\HH,d)$ by Theorem~\ref{thm:sufficient-conditions-bcp}.
\end{proof}

Note that, since the radial profile $\varphi$ is even, then, if $\varphi \in C^1((-r_d,r_d))$ and $\varphi'$ is differentiable at $0$, we have $\varphi'(0)=0$ and $|\varphi'(s) / s|$ is bounded in a neighbourhood of $0$. This leads to the following corollary.

\begin{corollary} \label{cor:sufficiention-rot-inv-bis} Let $d$ be a rotationally invariant homogeneous distance on $\HH$ with radial profile $\varphi:[-r_d,r_d] \rightarrow [0,+\infty)$. Assume $\varphi \in C^1((-r_d,r_d))$, $\varphi'$ is differentiable at $0$, and there is $m>0$ such that $\varphi'(s) \leq -m s$ for every $s\in (0,r_d)$. Then BCP holds in $(\HH,d)$.
\end{corollary}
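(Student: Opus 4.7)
The plan is to derive Corollary~\ref{cor:sufficiention-rot-inv-bis} as an immediate consequence of the preceding Corollary~\ref{cor:sufficiention-rot-inv}. Since the conclusion is identical, the only task is to verify that the two hypotheses needed in Corollary~\ref{cor:sufficiention-rot-inv}, namely the essential boundedness of $|\varphi'(s)/s|$ near $0$ and the inequality $\varphi'(s)\leq -ms$ a.e.~on $(0,r_d)$, both follow from the stronger assumptions made here.

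For the first hypothesis, I would rely on the fact that the radial profile $\varphi:[-r_d,r_d]\to[0,+\infty)$ is by definition an \emph{even} function, as recorded in the paragraph introducing rotationally invariant homogeneous distances. Combined with $\varphi\in C^1((-r_d,r_d))$, evenness forces $\varphi'(0)=0$. The assumed differentiability of $\varphi'$ at $0$ then yields
\begin{equation*}
\lim_{s\to 0}\frac{\varphi'(s)}{s}=\lim_{s\to 0}\frac{\varphi'(s)-\varphi'(0)}{s-0}=(\varphi')'(0)\in\R,
\end{equation*}
so the ratio $\varphi'(s)/s$ is in fact bounded, hence essentially bounded, on some neighbourhood of $0$. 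For the second hypothesis, the pointwise bound $\varphi'(s)\leq -ms$ for every $s\in(0,r_d)$ assumed here is strictly stronger than the corresponding almost-everywhere inequality required by Corollary~\ref{cor:sufficiention-rot-inv}, so no further verification is needed.

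With both hypotheses in place, Corollary~\ref{cor:sufficiention-rot-inv} applies and yields the validity of BCP in $(\HH,d)$. There is essentially no obstacle here: the only substantive observation is that combining evenness with $C^1$ regularity at the origin eliminates any non-zero contribution of $\varphi'(0)/s$ in the limit, so that differentiability of $\varphi'$ at $0$ is exactly what is needed to control $\varphi'(s)/s$ near $0$.
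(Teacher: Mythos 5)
Your proof is correct and follows exactly the route the paper takes: the paper notes, in the paragraph immediately preceding Corollary~\ref{cor:sufficiention-rot-inv-bis}, that evenness of $\varphi$ together with $\varphi\in C^1$ forces $\varphi'(0)=0$, and differentiability of $\varphi'$ at $0$ then gives boundedness of $|\varphi'(s)/s|$ near $0$, so Corollary~\ref{cor:sufficiention-rot-inv} applies. No difference in approach.
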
 

Corollary~\ref{cor:sufficiention-rot-inv-bis} gives the "if" part in Theorem~\ref{thm:intro-characterization-rot-inv-dist}. The validity of BCP for the rotationally invariant homogeneous distances $d_\epsilon$~\eqref{e:d_epsilon} when $\epsilon >0$ comes as an application of Corollary~\ref{cor:sufficiention-rot-inv-bis}. Indeed, let $\epsilon >0$ be fixed and let $B_\epsilon$ denote the unit ball centered at the origin in $(\HH,d_\epsilon)$. One can easily check that $\pi(B_{\epsilon})$ is a Euclidean disc with radius $r_\epsilon:=1/\sqrt{1+\epsilon^2}$ and that the radial profile $\varphi_\epsilon$ of $d_\epsilon$ is given by 
\[
\varphi_\epsilon(s) := \frac{1}{4} \left(\left(1-\epsilon^2 s^2\right)^2-s^4\right)^{1/2}
\]
for $s\in [-r_\epsilon,r_\epsilon]$. We have 
\[
\varphi_\epsilon'''(s)= \frac{3 s \left(\left(\epsilon^4-1\right) s^4-1\right)}{\left(\left(1-\epsilon^2 s^2\right)^2-s^4\right)^{5/2}}  < 0,
\]
and hence, $\varphi_\epsilon''(s) \leq \varphi_\epsilon''(0) = -\epsilon^2 / 2$ for every $s\in (0,r_\epsilon)$ . Therefore one can apply Corollary~\ref{cor:sufficiention-rot-inv-bis} with $m =\epsilon^2 / 2$ and BCP holds in $(\HH,d_\epsilon)$ for every $\epsilon >0$ as claimed.

\begin{remark} The proof of Theorem~\ref{thm:sufficient-conditions-bcp} and Corollary~\ref{cor:sufficiention-rot-inv-bis} can also be used to prove the validity of BCP for the rotationally invariant continuous homogeneous quasi-distance on $\HH$ defined by $$d(0,(v,z)): = (\|v\|^2 + |z|)^{1/2}$$ whose unit ball centered at the origin can be described as 
$$\{(v,z)\in \HH:\ \|v\|\leq 1,\ -\varphi(\|v\|)  \le z\le \varphi(\|v\|) \}$$ with $\varphi(s) := 1 - s^2$ for $s\in [-1,1]$.
\end{remark}

\begin{remark} Corollary~\ref{cor:sufficiention-rot-inv-bis} can be used to recover the fact that the rotationally invariant homogeneous distances $d_\alpha$ considered in~\cite{LeDonne_Rigot_Heisenberg_BCP} satisfy BCP. Their radial profile $\varphi_\alpha: [-\alpha,\alpha] \rightarrow[0,+\infty)$ is indeed given by 
\begin{equation*}
\varphi_\alpha (s):= \left(\alpha^2 - s^2\right)^{1/2} ,
\end{equation*}
which satisfies $\varphi'_\alpha (s) = - s \left(\alpha^2 - s^2\right)^{-1/2} \leq - \alpha^{-1} s$ for every $s\in (0,\alpha)$.
\end{remark}

\subsection{An abstract sufficient condition}  \label{subsect:abstract-sufficient-cdt}
We recall that a subset $A$ of metric space $(X,\rho)$ is said to be an \textit{$\epsilon$-net} if $\rho(x,y)\ge\epsilon$ for all $x, y \in A$, $x\not= y$, and that $(X,\rho)$ is said to be \emph{$\epsilon$-compact} if there is a constant $C\geq 1$ such that $\card A \leq C$ for every $\epsilon$-net $A$ in $(X,\rho)$. 

\begin{theorem} \label{thm:abstract-sufficient-cdt}
	Let $d$ be a homogeneous distance on $\HH$. Assume that there are $\epsilon \in (0,1)$, $\epsilon'>0$, an $\epsilon'$-compact metric space $(X,\rho)$ and a map $\sigma:\HH\setminus\{0\}\to X$ such that $\sigma(\delta_t (p))=\sigma(p)$ for every $t>0$, $p\in\HH\setminus\{0\}$, and such that
\begin{equation} \label{e:sufficent-for-bcp}
	\forall p\in \partial B,\ \forall q\in \delta_{\epsilon} (B)\setminus B(p,1),
	\ \rho(\sigma(p),\sigma(q)) \ge \epsilon'~.
	\end{equation}
	Then $(\HH,d)$ satisfies BCP. 
\end{theorem}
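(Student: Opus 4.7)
\emph{Plan.} By left-invariance of $d$ it is enough to bound an arbitrary Besicovitch family $\mathcal{B}$ whose balls share the origin. If some ball in $\mathcal{B}$ has center $0$, then the Besicovitch condition forces it to be the only ball in $\mathcal{B}$ and we are done; otherwise we have $a_B := d(0, p_B) > 0$ for every $B \in \mathcal{B}$, and the dilation $\delta_{1/a_B}$ sends $p_B$ onto the unit sphere $\partial B$, placing it in the exact position to invoke~\eqref{e:sufficent-for-bcp}. This suggests organizing $\mathcal{B}$ by the value of $a_B$ on a logarithmic scale of ratio $\epsilon$.

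\emph{Two bounds.} For $k \in \Z$ set $\mathcal{B}_k := \{B \in \mathcal{B} : a_B \in (\epsilon^{k+1}, \epsilon^k]\}$ and $I := \{k : \mathcal{B}_k \neq \emptyset\}$. Within a single scale, the Besicovitch condition on centers, together with the fact that $p_B \in B(0, \epsilon^k)$ for every $B \in \mathcal{B}_k$, shows that distinct centers in $\mathcal{B}_k$ are mutually $\epsilon^{k+1}$-separated; since any homogeneous distance on $\HH$ is doubling (Lebesgue measure being $Q$-regular with respect to $d$), this yields a uniform bound $|\mathcal{B}_k| \leq N_1$ depending only on $\epsilon$ and $d$. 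For the between-scale bound, given $k_1, k_2 \in I$ with $k_2 \geq k_1 + 2$ and any $B_1 \in \mathcal{B}_{k_1}$, $B_2 \in \mathcal{B}_{k_2}$, set $\tilde p := \delta_{1/a_{B_1}}(p_{B_1}) \in \partial B$ and $q := \delta_{1/a_{B_1}}(p_{B_2})$. The scale gap $k_2 - k_1 \geq 2$ gives $d(0, q) = a_{B_2}/a_{B_1} \leq \epsilon$, so $q \in \delta_\epsilon(B)$; while the Besicovitch condition and $r_{B_1} \geq a_{B_1}$ give $d(\tilde p, q) > 1$, so $q \notin B(\tilde p, 1)$. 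Hypothesis~\eqref{e:sufficent-for-bcp} then produces $\rho(\sigma(\tilde p), \sigma(q)) \geq \epsilon'$, and the dilation-invariance of $\sigma$ upgrades this to $\rho(\sigma(p_{B_1}), \sigma(p_{B_2})) \geq \epsilon'$.

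\emph{Conclusion.} Pick a representative $B_k \in \mathcal{B}_k$ for each $k \in I$ and split $I = I_{\mathrm{even}} \sqcup I_{\mathrm{odd}}$. In each part, consecutive indices differ by at least $2$, so the values $\{\sigma(p_{B_k})\}$ are mutually $\epsilon'$-separated in $X$; by the $\epsilon'$-compactness of $(X, \rho)$, each part has size at most a universal constant $C_X$, hence $|I| \leq 2 C_X$ and $|\mathcal{B}| = \sum_{k \in I} |\mathcal{B}_k| \leq 2 C_X N_1$, establishing BCP. The main obstacle is identifying both the correct rescaling---$\delta_{1/a_B}$ rather than $\delta_{1/r_B}$, so that a chosen center lands exactly on $\partial B$---and the correct scale gap of $2$, which is precisely what forces $a_{B_2}/a_{B_1} \leq \epsilon$ and thus enables the application of~\eqref{e:sufficent-for-bcp}. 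Once these two choices are in place, the within-scale doubling count and the even/odd splitting of $I$ combine mechanically to give the desired uniform bound.
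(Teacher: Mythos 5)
Your proof is correct and follows essentially the same strategy as the paper's: decompose the Besicovitch family by scale using $a_B = d(0,p_B)$, bound each scale by compactness/doubling of the unit ball, and bound the number of scales by sending one representative per scale through $\sigma$ to produce an $\epsilon'$-net in $X$. The only real difference is organizational: the paper sorts radii and greedily groups indices so that consecutive block representatives automatically satisfy $r_{i_{k+1}} < \epsilon\, r_{i_k}$, whereas you fix a geometric partition $\mathcal{B}_k = \{B : a_B \in (\epsilon^{k+1}, \epsilon^k]\}$ and then split $I$ by parity to force the scale gap of $2$ needed to invoke~\eqref{e:sufficent-for-bcp} — a small workaround, but both arguments are sound and give the same conclusion.
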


\begin{proof} Let $\mathcal{B} := \{B(p_i,r_i)\}_{i=1}^n$ be a finite Besicovitch family of balls in $(\HH,d)$, see Definition~\ref{def:Besicovitchfamily}, and assume that $r_1\geq \dots\geq r_n >0$. By left-invariance, we may also assume that $0\in\cap_{i=1}^n  B(p_i,r_i)$, and, with no loss of generality, that $r_i = d(0,p_i)$ for every $1\leq i \leq n$. 

We set $i_1:=1$ and then iteratively $i_{k+1}:=\min\{j>i_k : r_j < \epsilon r_{i_k}\}$ as long as possible. We get from this construction a finite sequence $i_1=1<\dots<i_l\leq n$ for some integer $l\geq 1$ with the following properties. Set $i_{l+1}:=n+1$ and $I_k:=\{i_k,\dots,i_{k+1}-1\}$ for $k\in\{1,\dots,l\}$. First, for all $k\in\{1,\dots,l\}$ and all $j\in I_k$, $\epsilon r_{i_k} \leq r_j \leq r_{i_k}$. Second, $r_{i_{k+1}} < \epsilon r_{i_k}$ for all $k\in\{1,\dots,l-1\}$ whenever $l\geq 2$. Third, $\cup_{k=1}^l I_k = \{1,\dots,n\}$.


Next, we prove that, for all $k\in\{1,\dots,l\}$, $\card I_k \leq C$ where $C$ is the maximal cardinality of an $\epsilon$-net in $B$. Recall that $B$ is a compact subset of $(\HH,d)$ and hence is $\epsilon$-compact. For all $j\in I_k$, $d(0,p_j)= r_j\leq r_{i_k}$ and, for all $i$, $j \in I_k$, $i\not= j$, $d(p_i,p_j) > \max(r_i,r_j) \geq \epsilon r_{i_k}$. Hence $\{\delta_{1/r_{i_k}}(p_j)\}_{j\in I_k}$ is an $\epsilon$-net in $B$ and the claim follows.

Finally, we prove that $l\leq C'$ where $C'$ is the maximal cardinality of an $\epsilon'$-net in $(X,\rho)$. To this aim, it is sufficient to check that $\{\sigma(p_{i_k})\}_{k=1}^l$ is an $\epsilon'$-net in $(X,\rho)$ whenever $l\geq 2$. Let $k,h\in\{1,\dots,l\}$ with $h>k$. We have 
$$d(\delta_{1/r_{i_k}}(p_{i_h}), 0) = r_{i_k}^{-1}\, r_{i_h} < \epsilon^{h-k}\leq \epsilon$$ 
and 
$$d(\delta_{1/r_{i_k}} (p_{i_h}),\delta_{1/r_{i_k}} (p_{i_k})) = r_{i_k}^{-1}\, d(p_{i_h},p_{i_k}) >r_{i_k}^{-1}  \max (r_{i_k},r_{i_h}) = 1~.$$ 
Hence $\delta_{1/r_{i_k}} (p_{i_h}) \in \delta_\epsilon (B)\setminus B(\delta_{1/r_{i_k}} (p_{i_k}),1)$ with $\delta_{1/r_{i_k}} (p_{i_k}) \in \partial B$. Since $\sigma \circ \delta_t =  \sigma$ for every $t>0$, it follows from~\eqref{e:sufficent-for-bcp} that
$$\rho(\sigma(p_{i_k}),\sigma(p_{i_h}))  = \rho(\sigma(\delta_{1/r_{i_k}} (p_{i_k})),\sigma(\delta_{1/r_{i_k}} (p_{i_h}))) \geq \epsilon'~$$ which proves the claim.

All together, we get $\card \mathcal{B} = \sum_{k=1}^j \card I_k \leq CC'$, and hence BCP holds in $(\HH,d)$.
\end{proof}

\begin{remark} \label{rmk:homogeneous-groups} The proof of Theorem~\ref{thm:abstract-sufficient-cdt} only uses properties that the Heisenberg group shares with any other homogeneous group equipped with a homogeneous distance. Hence the abstract sufficient condition for the validity of BCP given in Theorem~\ref{thm:abstract-sufficient-cdt} holds more generally for homogeneous distances on homogeneous groups.
\end{remark}

The proof of Theorem~\ref{thm:sufficient-conditions-bcp} will be based on the following corollary of Theorem~\ref{thm:abstract-sufficient-cdt}.

\begin{corollary} \label{cor:abstract-sufficient-cdt}
Let $d$ be a homogeneous distance on $\HH$. Assume that there is $\epsilon\in (0,1)$ such that, if $p=(v,z)\in \partial B$, $q=(v',z')\in \partial B$ are such that $v\not= 0$, $v'\not= 0$, $\|v'/\|v'\| -v/\|v\| \|<\epsilon$ and $|z'-z|<\epsilon$, then $\delta_t(q)\in B(p,1)$ for all $t\in[0,\epsilon]$. Then BCP holds in $(\HH,d)$. 
\end{corollary}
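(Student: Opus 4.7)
The plan is to deduce the corollary from Theorem~\ref{thm:abstract-sufficient-cdt} by exhibiting a dilation-invariant map $\sigma$ from $\HH\setminus\{0\}$ into a bounded, hence $\epsilon'$-compact, metric space $(X,\rho)$. The natural candidate is to first rescale a nonzero $p\in\HH$ to its normalisation $\hat p:=\delta_{1/d(0,p)}(p)\in\partial B$ and then read off ``which direction on $\bb S^1$'' together with ``which vertical height'' this projection carries, namely $\sigma(p):=(\hat v/\|\hat v\|,\hat z)$ when $\hat p=(\hat v,\hat z)$ has $\hat v\ne 0$. The points of $\partial B$ lying on the $z$-axis, which by Proposition~\ref{prop:LDNG} are exactly $(0,\pm\phi(0))$, must be isolated symbolically since the hypothesis of the corollary only concerns points with nonzero horizontal part.

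Concretely, I would set $M:=\sup_{v\in K}\phi(v)$, which is finite by Proposition~\ref{prop:LDNG}, and let $X$ be the disjoint union $(\bb S^1\times[-M,M])\sqcup\{N,S\}$ equipped with the metric $\rho$ that restricts to $\max(\|\theta-\theta'\|,|z-z'|)$ on the first piece and satisfies $\rho(x,N)=\rho(x,S)=\rho(N,S):=2M+2$ whenever $N$ or $S$ is involved; the triangle inequality is routine. The space $(X,\rho)$ is compact, hence $\epsilon'$-compact for every $\epsilon'>0$. I define $\sigma: \HH\setminus\{0\}\to X$ by $\sigma(p):=(\hat v/\|\hat v\|,\hat z)$ if $\hat v\ne 0$, and $\sigma(p):=N$ (resp.\ $S$) if $\hat v=0$ and $\hat z>0$ (resp.\ $\hat z<0$). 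Scale invariance $\sigma\circ\delta_t=\sigma$ follows at once from the identity $\delta_{1/d(0,\delta_t p)}\circ\delta_t=\delta_{1/d(0,p)}$.

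It then remains to verify condition~\eqref{e:sufficent-for-bcp} with, say, $\epsilon':=\min(\epsilon,1)$, where $\epsilon$ is the constant given by the corollary's hypothesis. I would fix $p=(v,z)\in\partial B$ and $q\in\delta_\epsilon(B)\setminus B(p,1)$; since $d(0,p)=1$ forces $0\in B(p,1)$, one has $q\ne 0$, so I may write $s:=d(0,q)\in(0,\epsilon]$ and $\hat q:=\delta_{1/s}(q)\in\partial B$ with $\hat q=(\hat v,\hat z)$. In the principal case $v,\hat v\ne 0$, assume for contradiction that $\rho(\sigma(p),\sigma(q))<\epsilon$; then $\|v/\|v\|-\hat v/\|\hat v\|\|<\epsilon$ and $|z-\hat z|<\epsilon$, so the corollary's hypothesis applied to the pair $p,\hat q\in\partial B$ gives $\delta_t(\hat q)\in B(p,1)$ for every $t\in[0,\epsilon]$, and the choice $t=s$ yields $q\in B(p,1)$, a contradiction. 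If exactly one of $v,\hat v$ vanishes, then $\sigma(p)$ and $\sigma(q)$ lie in different pieces of $X$ and $\rho(\sigma(p),\sigma(q))=2M+2\ge\epsilon'$. If both vanish, a direct use of the explicit formula $d(0,(0,w))=(|w|/\phi(0))^{1/2}$ on the center shows that $q\in\delta_\epsilon(B)\setminus B(p,1)$ with $q$ on the $z$-axis forces $p$ and $q$ to sit in opposite halves of that axis, and hence $\sigma(p)\ne\sigma(q)$ with again $\rho=2M+2$.

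The main subtlety is precisely this last case: the corollary's hypothesis does not cover points with vanishing horizontal part, and one must handle the $z$-axis by hand, using the extra symbols $N,S$ of $X$ together with the rigid formula for $d$ restricted to the center of $\HH$. All other ingredients are just bookkeeping once the right $\sigma$ is fixed.
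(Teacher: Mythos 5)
Your proposal is correct and follows essentially the same route as the paper: both apply Theorem~\ref{thm:abstract-sufficient-cdt} with $X$ being (roughly) $\bb S^1\times[-M,M]$ augmented by two pole symbols and $\sigma$ defined by normalising to $\partial B$ via dilation and then recording the angular direction and height. The only cosmetic differences are that the paper embeds the two polar points as $(0,\pm z_N)\in\R^3$ and takes $\epsilon':=\min(2z_N,\epsilon)$, whereas you add them as abstract points at a large fixed distance and take $\epsilon':=\epsilon$; both choices make the verification of~\eqref{e:sufficent-for-bcp} go through, and your explicit treatment of the $z$-axis case (using $d(0,(0,w))=(|w|/\phi(0))^{1/2}$ and the fact $z_N=\phi(0)$) matches the paper's handling of $p=\pm p_N$.
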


\begin{proof} To apply Theorem~\ref{thm:abstract-sufficient-cdt}, we introduce a compact metric space $(X, \rho)$ as follows. Let $M:=\max\{z>0:\, (v,z)\in B \text{ for some } v\in \R^2\}$ and $z_N>0$ be such that $d(0,(0,z_N))=1$. We set $X: = \bb S^1 \times [-M,M] \cup \{(0,z_N),(0,-z_N)\}$ and we equip $X$ with the distance $\rho((v,z),(v',z')): = \max (\|v'-v\|, |z'-z|)$. Then $(X,\rho)$ is compact and hence $\epsilon'$-compact for all $\epsilon' >0$.

Define $\hat\pi:\partial B \setminus \{0\} \rightarrow X$ by $\hat\pi(0,z_N):= (0,z_N)$, $\hat\pi(0,-z_N):= (0,-z_N)$, and $\hat\pi(v,z) := (v/\|v\|,z)$ if $v\not= 0$. Let $\sigma:\HH\setminus \{0\} \rightarrow X$ be defined by $\sigma(p) := \hat\pi (\delta_{\R_{>0}} (p) \cap \partial B)$. We have $\sigma(\delta_t (p))=\sigma(p)$ for all $t>0$, $p\in\HH\setminus \{0\}$. 

To conclude the proof, let us check that~\eqref{e:sufficent-for-bcp} holds with $\epsilon \in (0,1)$ given by the assumption in Corollary~\ref{cor:abstract-sufficient-cdt} and $\epsilon':= \min(2 z_N,\epsilon)$. We set $p_N:=(0,z_N)\in \HH$. Let $p\in \partial B$ and $q\in \delta_\epsilon (B)\setminus B(p,1) $. First, if $p=p_N$, then $\sigma(q) \in X \setminus \{(0,z_N)\}$. Indeed, otherwise $q=(0,z)$ for some $z>0$. On the one hand, this implies that $d(p,q)=\sqrt{|1-z/z_N|} >1$ since $q\not\in B(p,1)$. On the other hand, since $q \in \delta_\epsilon (B)$, we have $d(0,q) = \sqrt{|z/z_N|} \leq \epsilon $, and hence, $0<z/z_N<1$. This implies that $d(p,q)=\sqrt{1-z/z_N} < 1$ which gives a contradiction. Similarly, if $p=-p_N$, then $\sigma(q) \in X \setminus \{(0,-z_N)\}$. Hence $\rho(\sigma(p), \sigma(q)) \geq \min(1,2 z_N)\geq \epsilon'$ when $p=p_N$ or $p=-p_N$. Next, assume that $p=(v,z)$ with $v\not= 0$ and assume by contradiction that $\rho(\sigma(p),\sigma(q)) < \epsilon'$. This implies that $q' := \delta_{1/d(0,q)}(q)\in \partial B$ can be written as $q' =(v',z')$ for some $v'\not= 0$ such that $\|v'/\|v'\| -v/\|v\| \|<\epsilon'\leq \epsilon$ and some $z'$ such that $|z'-z|<\epsilon'\leq\epsilon$. Moreover, we have $t:=d(0,q)\leq \epsilon$ and hence, we get by assumption that $ \delta_t(q')=q\in B(p,1)$. This gives a contradiction and concludes the proof.
\end{proof}

%
\subsection{Proof of Theorem~\ref{thm:sufficient-conditions-bcp}}  \label{subsect:proof-main-sufficient} From now on in this section we assume that $d$ is a homogeneous distance on $\HH$ that satisfies the assumptions of Theorem~\ref{thm:sufficient-conditions-bcp}. The validity of BCP will follow as an application of Corollary~\ref{cor:abstract-sufficient-cdt}. Namely, we will prove the following lemma.

\begin{lemma} \label{lem:sufficient-for-bcp}
There are $\epsilon \in (0,1)$ and $\theta_\epsilon \in (0,\pi/2)$ such that if $p_0=(v_0,z_0) \in \partial B$ with $v_0 \not= 0$ and $z_0\geq 0$ and $q = (v,z) \in \partial B$ with $v\not= 0$ are such that $|z-z_0| < \epsilon$ and $\measuredangle (v_0,v) <  \theta_\epsilon$, then $p_0^{-1}\cdot \delta_t(q) \in B$ and $\delta_t(q)\cdot p_0^{-1} \in B$ for every $t\in [0,\epsilon]$.
\end{lemma}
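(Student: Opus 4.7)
My plan is to translate the two membership conditions into scalar inequalities on the profile $\phi$, identify which one is tight at $t=0$, and then establish it by integrating the hypothesis~\eqref{e:phi-2} along the segment from $v_0$ to $v_0-tv$, with a case split on $\|v_0\|$ in order to obtain uniform constants.

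A direct computation in $\HH$ gives
\[
p_0^{-1}\cdot\delta_t(q)=\bigl(tv-v_0,\ t^2z-z_0-\tfrac t2\omega(v_0,v)\bigr),\qquad \delta_t(q)\cdot p_0^{-1}=\bigl(tv-v_0,\ t^2z-z_0+\tfrac t2\omega(v_0,v)\bigr),
\]
so by the description of $B$ in Proposition~\ref{prop:LDNG}, both products belong to $B$ if and only if, writing $u=tv-v_0$,
\[
\phi(u)\ge t^2z-z_0+\tfrac t2|\omega(v_0,v)|\quad\text{and}\quad \phi(v_0-tv)\ge z_0-t^2z+\tfrac t2|\omega(v_0,v)|.
\]
For the main case $v_0\in\interior(K)$, the hypothesis $z_0\ge 0$ together with~\eqref{e:positivity-profile} and the description of $\partial B$ forces $z_0=\phi(v_0)$, so at $t=0$ the first inequality has the strict slack $\phi(-v_0)+\phi(v_0)>0$ and is preserved for small $t$ by a routine Lipschitz estimate, whereas the second is saturated. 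The core task is thus to show
\[
F(t):=\phi(v_0-tv)-\phi(v_0)+t^2z-\tfrac t2|\omega(v_0,v)|\ \ge\ 0\qquad\text{for all }t\in[0,\epsilon].
\]

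The estimate on $F$ would come from integrating~\eqref{e:phi-2} along the segment $s\mapsto v_0-sv$. For $\theta_\epsilon<\alpha$ and $\epsilon$ small enough, this segment lies in $\interior(K)\setminus\{0\}$ and its points all make an angle at most $\alpha$ with $v$, so~\eqref{e:phi-2} gives $\langle\nabla\phi(v_0-sv),v\rangle\le -m\|v_0-sv\|\|v\|$ at a.e.\ point of the segment. A standard slicing argument (perturbing $v_0$ transverse to $v$, applying Fubini to pass from the two-dimensional a.e.\ statement to the one-dimensional integration, then passing to the limit by continuity of $\phi$) promotes this to the integrated inequality $\phi(v_0-tv)-\phi(v_0)\ge m\|v\|\int_0^t\|v_0-sv\|\,ds$. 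Combining with the elementary bound $|\omega(v_0,v)|\le\|v_0\|\|v\|\sin\theta_\epsilon$ yields
\[
F(t)\ \ge\ m\|v\|\int_0^t\|v_0-sv\|\,ds+t^2z-\tfrac t2\|v_0\|\|v\|\sin\theta_\epsilon.
\]

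To make this nonnegative for a uniform $\epsilon$, I would split on the size of $\|v_0\|$. If $\|v_0\|\le\kappa/2$, then~\eqref{e:phi-3} gives $\phi(v_0)\ge\phi(0)-M\|v_0\|^2/2$, so $z_0=\phi(v_0)$ stays close to $\phi(0)>0$; hence for $\epsilon$ smaller than a fixed multiple of $\phi(0)$ the hypothesis $|z-z_0|<\epsilon$ forces $z\ge 0$ and the $t^2z$ term becomes favorable. If $\|v_0\|>\kappa/2$, a radial integration of~\eqref{e:phi-2} yields $\phi(0)-\phi(v_0)\ge m\kappa^2/8$, and combining this with $|z-z_0|<\epsilon$ and~\eqref{e:phi-3} forces $\|v\|$ to be bounded below by some $\eta>0$ depending only on $m,M,\kappa$; the positive first-order term $m\|v\|\int_0^t\|v_0-sv\|\,ds$ then dominates $t^2|z|$ and the twist term for small $\epsilon$. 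Choosing $\sin\theta_\epsilon\le m/2$ and using the elementary bound $\int_0^t\|v_0-sv\|\,ds\ge\tfrac t2\|v_0\|$ (valid for $t\|v\|\le\|v_0\|$; the complementary range is handled by $\|v_0-sv\|\ge\|v_0\|\sin\theta_\epsilon$) then gives $F\ge 0$ on $[0,\epsilon]$. I expect the main obstacle to be obtaining constants that are genuinely uniform in $p_0$ and $q$ across the case split and near its boundary $\|v_0\|\sim\kappa/2$, together with the residual case $v_0\in\partial K$ (in which $z_0=\phi(v_0)$ may fail and both constraints have slack at $t=0$); this last case should be reducible to the interior case by an approximation argument exploiting the openness of the hypotheses $|z-z_0|<\epsilon$ and $\measuredangle(v_0,v)<\theta_\epsilon$ together with the radial continuity~\eqref{e:radial-continuity-phi}.
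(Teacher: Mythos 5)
Your setup and identification of the tight inequality are correct (and match the paper's), but two of the places you wave your hands are exactly where the paper has to do real work, and your stated shortcuts would fail. First, the claim that the first membership inequality $\phi(tv-v_0)\ge t^2z-z_0+\tfrac t2|\omega(v_0,v)|$ is ``preserved for small $t$ by a routine Lipschitz estimate'' because of the slack $\phi(v_0)+\phi(-v_0)>0$ at $t=0$ does not give a \emph{uniform} $\epsilon$: that slack is not bounded away from zero and tends to $0$ as $v_0\to\partial K$ (for the distances $d_\epsilon$, for instance, $\phi$ vanishes on $\partial K$), so a crude continuity estimate yields a bound only of the form $\epsilon\lesssim\phi(v_0)+\phi(-v_0)$, which degenerates. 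To handle this inequality one must again invoke~\eqref{e:phi-2} to see that $\phi(sv-v_0)$ is actually \emph{increasing} in $s$ (apply~\eqref{e:phi-2} at $sv-v_0$ with $w=-v$) faster than the right side for $\|v_0\|$ bounded below, and separately use the absolute lower bound $\phi(\gamma(t))\ge\eta$ from~\eqref{e:def-eta} when $\|v_0\|$ is small — a case split parallel in kind to the one you propose for the other inequality.

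Second, in your case $\|v_0\|\le\kappa/2$ the plan is incomplete. Getting $z\ge 0$ and saying ``the $t^2z$ term becomes favorable'' does not dispose of the remaining terms: once $t\|v\|$ is comparable to $\|v_0\|$, the segment $s\mapsto v_0-sv$ swings past the origin, the angle $\measuredangle(v_0-sv,v)$ exceeds $\alpha$, and~\eqref{e:phi-2} no longer controls $\langle\nabla\phi(v_0-sv),v\rangle$; there $\phi(v_0-tv)-\phi(v_0)$ can be negative and of first order in $t$ while $t^2z$ is only second order. The needed control comes from~\eqref{e:phi-3} used as a genuine bound on $\|\nabla\phi\|$ along the post-swing portion of the segment (you only use~\eqref{e:phi-3} to lower-bound $\phi(v_0)$), together with the fact that when $\|v_0\|$ is small $\|v\|\lesssim\sqrt{\epsilon}$ as well (which controls both the gradient term and the twist $|\omega(v_0,v)|$); this is the content of the paper's Lemma~\ref{lem:v_0-v} and Lemma~\ref{lem:g-2}. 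Your framework for the case $\|v_0\|>\kappa/2$, the slicing/approximation argument for the a.e.\ hypothesis, and the reduction of the $v_0\in\partial K$ case are all on the right track (though you should also treat $v\in\partial K$, which the paper handles in Lemma~\ref{lem:a.e.vs-every}).
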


Before giving the proof of Lemma~\ref{lem:sufficient-for-bcp}, we first explain how Theorem~\ref{thm:sufficient-conditions-bcp} follows from Lemma~\ref{lem:sufficient-for-bcp} in conjonction with Corollary~\ref{cor:abstract-sufficient-cdt}.

\medskip

\noindent\textit{Proof of Theorem~\ref{thm:sufficient-conditions-bcp}.} Let $\epsilon\in (0,1)$, $\theta_\epsilon \in (0,\pi/2)$ be given by Lemma~\ref{lem:sufficient-for-bcp}, and set $\epsilon' := \min (\epsilon,\sqrt{2}\,(1-\cos \theta_\epsilon))$. Let $p_0=(v_0,z_0) \in \partial B$, $q = (v,z) \in \partial B$ be such that $v_0 \not= 0$, $v\not= 0$, $|z-z_0| < \epsilon'$, and $\|v/\|v\| -v_0/\|v_0\| \|<\epsilon'$. Then $\measuredangle (v_0,v) < \theta_\epsilon$. If $z_0 \geq 0$, the fact that $\delta_t(q) \in B(p_0,1)$ for every $t\in [0,\epsilon']$ follows from the first conclusion in Lemma~\ref{lem:sufficient-for-bcp}. If $z_0 <0$, then $p_0^{-1} = (-v_0,-z_0)\in \partial B$ with $-z_0>0$, $q^{-1}=(-v,-z)$, $-v_0 \not= 0$, $-v\not= 0$, $|-z+z_0| < \epsilon'$, and $\measuredangle (-v_0,-v) =  \measuredangle (v_0,v)  < \theta_\epsilon$. Therefore it follows from the second conclusion in Lemma~\ref{lem:sufficient-for-bcp} that $\delta_t(q^{-1})\cdot p_0 \in B$, and hence $(\delta_t(q^{-1})\cdot p_0)^{-1} =  p_0^{-1}\cdot \delta_t(q) \in B$, for every $t\in [0,\epsilon']$. Hence Corollary~\ref{cor:abstract-sufficient-cdt} can be applied and BCP holds in $(\HH,d)$ as claimed. \hfill $\Box$

\medskip

The rest of the section is devoted to the proof of Lemma~\ref{lem:sufficient-for-bcp}. It will be achieved through a series of lemmas, from Lemma~\ref{lem:prop-of-phi} to Lemma~\ref{lem:g-2} below.

\medskip

We say that a constant is an absolute constant if its value can be chosen depending only on the parameters $m$, $\alpha$, $\kappa$ and $M$ given by~\eqref{e:phi-2} and~\eqref{e:phi-3}, geometric properties of the set $K$ and properties of the function $\phi$. We refer to Proposition~\ref{prop:LDNG} for these latter properties. In particular $K\subset \R^2$ is bounded and we let $D>0$ be such that 
\begin{equation} \label{e:def-D}
K \subset \{v\in \R^2 : \|v\| \leq D \}.
\end{equation}
We also set 
\begin{equation} \label{e:def-eta}
\eta := \min \{\phi(v) : \|v\| \leq \kappa \}>0
\end{equation}
where $\kappa$ is given by~\eqref{e:phi-3}. 

\medskip

We start with three preliminary lemmas, Lemma~\ref{lem:prop-of-phi}, Lemma~\ref{lem:a.e.vs-every} and Lemma~\ref{lem:v_0-v}.

\begin{lemma}\label{lem:prop-of-phi}
	There is an absolute constant $\epsilon_0 \in (0,1)$ such that for every $\epsilon \in (0,\epsilon_0)$, the following holds. Let $v,w\in K\setminus\{0\}$. Assume that $\measuredangle(v,w)\le\epsilon$ and $\|v\|-\|w\| \ge \sqrt{2\epsilon / m}$. Then $\phi(w)-\phi(v)\ge\epsilon$.
\end{lemma}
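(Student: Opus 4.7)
The plan is to integrate $\phi$ along the straight segment $\sigma(t) := (1-t)v + tw$, $t\in[0,1]$, and to extract the required gain from the gradient bound~\eqref{e:phi-2}. Set $d := \|v\|-\|w\| \ge \sqrt{2\epsilon/m}$ and $\theta := \measuredangle(v,w)\le \epsilon$. By convexity $\sigma(t)\in K$ for every $t$, and by taking $\epsilon_0$ small enough (so $\theta < \pi/2$) the vectors $v$ and $w$ lie in a narrow cone about $\hat v := v/\|v\|$, which easily implies $\sigma(t)\ne 0$ on $[0,1]$.

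The main geometric step is to show that, for a suitable absolute constant $\epsilon_0\in(0,1)$ and every $\epsilon\in(0,\epsilon_0)$, one has $\measuredangle(\sigma(t), v-w)\le \alpha$ uniformly in $t\in[0,1]$. Choosing a unit vector $\hat v^\perp$ orthogonal to $\hat v$, I would decompose $\sigma(t) = A(t)\hat v + B(t)\hat v^\perp$ with $A(t) = (1-t)\|v\| + t\|w\|\cos\theta > 0$ and $B(t) = t\|w\|\sin\theta$, and similarly $v-w = (\|v\| - \|w\|\cos\theta)\hat v - \|w\|\sin\theta\,\hat v^\perp$. A direct computation shows that $t \mapsto B(t)/A(t)$ is monotone, whence $\measuredangle(\sigma(t), \hat v)\le \theta \le \epsilon$, while $\measuredangle(\hat v, v-w)\le \arctan(\|w\|\sin\theta/d)\le \arctan(D\sqrt{m\epsilon/2})$ with $D$ as in~\eqref{e:def-D}. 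By the triangle inequality for angles, the sum is at most $\alpha$ provided $\epsilon_0$ is chosen small enough depending only on $\alpha$, $m$ and $D$.

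With this angular control, \eqref{e:phi-2} gives $\langle \nabla\phi(x), v-w\rangle \le -m\|x\|\|v-w\|$ for a.e.\ $x$ in a neighbourhood of $\sigma([0,1])$. Integrating along $\sigma$, whose velocity is $w-v$, yields
\begin{equation*}
\phi(w)-\phi(v) \ge m\,\|v-w\| \int_0^1 \|\sigma(t)\|\,dt.
\end{equation*}
Since $\|\sigma(t)\|\ge A(t)$ and $\int_0^1 A(t)\,dt = (\|v\|+\|w\|\cos\theta)/2 \ge \|v\|/2 \ge d/2$, and since $\|v-w\|\ge d$, we conclude $\phi(w)-\phi(v)\ge m d^2/2 \ge \epsilon$.

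The main technical obstacle is justifying the integration, because \eqref{e:phi-2} is only an a.e.\ statement on $\interior(K)\setminus\{0\}$ while $\sigma([0,1])$ is a two-dimensional null set. I would address this by first assuming $v,w\in \interior(K)\setminus\{0\}$ and mollifying $\phi$ in a small open tubular neighbourhood of $\sigma([0,1])\subset \interior(K)\setminus\{0\}$ on which the angular condition holds with a fixed positive margin; the bound transfers to the smooth approximations, and one then passes to the limit using local Lipschitz continuity of $\phi$ on $\interior(K)$. If $v$ or $w$ lies on $\partial K$, I would apply the interior statement to $v_\eta := (1-\eta_1)v$ and $w_\eta := (1-\eta_2)w$ with $\eta_1,\eta_2>0$ chosen so that $\|v_\eta\|-\|w_\eta\|\ge \sqrt{2\epsilon/m}$ (taking $\eta_2$ slightly larger than $\eta_1$ allows to compensate for the shrinking of $\|v\|-\|w\|$), and then let $\eta_1,\eta_2\to 0^+$ invoking the radial continuity~\eqref{e:radial-continuity-phi}.
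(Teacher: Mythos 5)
Your proof is correct and follows essentially the same strategy as the paper: integrate the gradient of $\phi$ along the straight segment $\sigma(t)$ from $v$ to $w$, use the hypothesis $\|v\|-\|w\|\geq\sqrt{2\epsilon/m}$ to show that $\measuredangle(\sigma(t),v-w)\leq\alpha$ uniformly in $t$, invoke~\eqref{e:phi-2}, and then observe that the integral dominates $m(\|v\|-\|w\|)^2/2\geq\epsilon$. Two of your technical steps differ from the paper, both in a way that is legitimate and arguably slightly cleaner. First, for the angular control you decompose $\sigma(t)$ and $v-w$ in the orthonormal frame $(\hat v,\hat v^\perp)$ and bound $\measuredangle(\hat v,v-w)$ via $\tan\measuredangle(\hat v,v-w)\leq \|w\|\sin\theta/(\|v\|-\|w\|)\leq D\sqrt{m\epsilon/2}$, whereas the paper first derives an upper bound on $\|w\|/\|v\|$ (their~\eqref{e:upper-bound-w/v}) and then reads off $\measuredangle(v,v-w)\leq\alpha/2$ from a triangle picture; your version avoids the case analysis and the picture. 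Second, to handle the fact that~\eqref{e:phi-2} is only an a.e.\ statement along a null segment, the paper uses a Fubini/selection argument (for every $v$ and a.e.\ $w$ the restriction of $\phi$ to $[v,w]$ is differentiable a.e.\ with the gradient bound holding a.e.\ on the segment, then passes to all $w$ by continuity), while you propose to mollify $\phi$ near a compact tubular neighbourhood of $\sigma([0,1])$ on which the angle condition holds with margin and pass to the limit; both are standard and correct, though you should be aware the paper's route avoids introducing mollifiers. Your treatment of boundary points of $K$ via the rescaled $v_\eta,w_\eta$ together with~\eqref{e:radial-continuity-phi} is also fine and is the same reduction the paper invokes implicitly.
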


\begin{proof}
By~\eqref{e:radial-continuity-phi} and the continuity of $\phi$ on $\interior (K)$, it is sufficient to find an absolute constant $\epsilon_0 \in (0,1)$ such that for every $\epsilon \in (0,\epsilon_0)$, the following holds. For every $v \in \interior (K)\setminus\{0\}$ and a.e.~$w\in \interior (K) \setminus\{0\}$, 
\begin{equation*}
\measuredangle (v,w) \leq \epsilon \;  \text{ and } \; \|v\|-\|w\| \ge \sqrt{\frac{2\epsilon}{m}} \,\, \Rightarrow \,\, \phi(w)-\phi(v)\ge\epsilon~.
\end{equation*}

For $v,w\in \interior (K)$, we set $\sigma(t):=v+t(w-v)$. Since $\phi \in \Lip_{loc}(\interior(K))$ and~\eqref{e:phi-2} holds, we have that for every $v\in \interior (K)\setminus\{0\}$, the following properties hold for a.e.~$w\in \interior (K)\setminus\{0\}$. First, The map $t\mapsto \phi(\sigma(t))$ is differentiable a.e.~on $[0,1]$ with derivative given by $\langle \nabla\phi(\sigma(t)),\sigma'(t) \rangle$. Second, for a.e.~$t\in [0,1]$, we have, for every $u \in \R^2\setminus \{0\}$,
\begin{equation} \label{e:phi-2-a.e.}
\measuredangle (\sigma(t),u) \leq \alpha \,\, \Rightarrow \,\,\langle \nabla\phi(\sigma(t)) , u \rangle \leq -m \|\sigma(t)\| \|u\|~.
\end{equation}

We thus consider $v\in \interior (K)\setminus\{0\}$ and let $w\in \interior (K)\setminus\{0\}$ be such that these properties hold, and we assume that $\measuredangle(v,w)\le\epsilon$ and $\|v\|-\|w\| \ge \sqrt{2\epsilon / m}$ for some $\epsilon >0$ to be chosen small enough later.

\smallskip
	
First, if $\epsilon \leq \alpha /2$ is chosen small enough, we have
\begin{equation} \label{e:upper-bound-w/v}
\|w\| \leq \|v\|\, \frac{\sin(\alpha/2)}{\sin(\alpha/2+\epsilon)}~.
\end{equation}
Indeed otherwise $\|w\| >  \|v\|\sin(\alpha/2) / \sin(\alpha/2+\epsilon)$ and 
	\[
 \|v\|-\|w\| < \|v\| \left( 1 - \frac{\sin(\alpha/2)}{\sin(\alpha/2+\epsilon)} \right) .
	\]
	We have 
	\[
	\frac{\sin(\alpha/2)}{\sin(\alpha/2 + \epsilon)} = 1 - \frac{\cos(\alpha/2)}{\sin(\alpha/2)} \epsilon + o(\epsilon).
	\]
	Hence, if $\epsilon>0$ is chosen small enough, we get 
	\[
	\|v\|-\|w\| < 2 D  \frac{\cos(\alpha/2)}{\sin(\alpha/2)} \epsilon<  \sqrt{\frac{2\epsilon}{m}} ,
	\]
where $D$ is given by~\eqref{e:def-D}, which gives a contradiction.

\begin{figure}[ht]
	\includegraphics[scale = 0.8]{./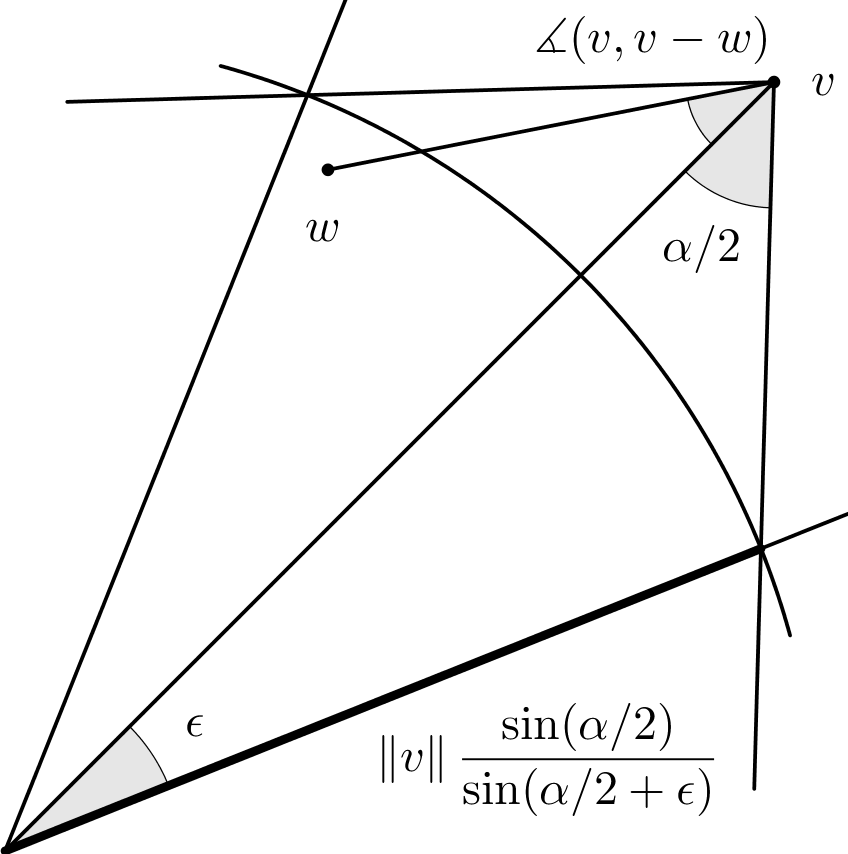}
	\caption{$\measuredangle (v,v-w) \leq \alpha/2$ } \label{fig:formulasin}
	\end{figure}

Next, it follows from elementary computations from planar geometry that the assumption $\measuredangle(v,w)\le\epsilon$ together with~\eqref{e:upper-bound-w/v} imply $\measuredangle (v,v-w) \leq \alpha/2$ and $\|w-v\| \leq \|v\|$ provided $\epsilon >0$ is chosen small enough, see Figure~\ref{fig:formulasin}. Recalling that $\sigma(t)=v+t(w-v)$, we get
\begin{equation*}
\begin{split}
	\measuredangle(\sigma(t),-\sigma'(t))  &= \measuredangle(\sigma(t),v-w) \\
	&= \measuredangle(\sigma(t),v) + \measuredangle(v,v-w)\\
	& \leq \measuredangle(w,v) + \alpha / 2 \leq \alpha
\end{split}
\end{equation*}
for every $t\in [0,1]$. Together with~\eqref{e:phi-2-a.e.}, this implies
	\begin{align*}
	\phi(w)-\phi(v) &= \int_0^1 \langle \grad\phi(\sigma(t)),\sigma'(t) \rangle \dd t \\
	&\ge \int_0^1 m \|\sigma(t)\| \|\sigma'(t)\| \dd t \\
	&\ge \int_0^1 m (\|v\|-t\|w-v\|) \|w-v\| \dd t \\
	&= m \|w-v\| \,(\|v\| - \frac12 \|w-v\|).
\end{align*}
Since $\|w-v\| \leq \|v\|$, we get 
\begin{equation*}
\phi(w)-\phi(v)  \ge \frac{m}{2} \|w-v\|^2
\ge \frac{m}{2} (\|v\|-\|w\|)^2 \ge \epsilon
	\end{equation*}
which concludes the proof.
\end{proof}

For $\epsilon \in (0,1)$, $\theta \in (-\pi,\pi)$ and  $p_0=(v_0,z_0) \in \partial B$ with $v_0\not= 0$, we set
\begin{equation*}
\partial (\epsilon,\theta,p_0) := \{(v,z) \in \partial B :\, v\not= 0,\, \angle (v_0,v) = \theta,\, |z-z_0| < \epsilon\}.
\end{equation*}
For $\nu \in \mathbb{S}^1$, we define 
\begin{equation} \label{e:def-hat-r}
\hat r(\nu) := \max\{s>0:\, s \nu \in K\}.
\end{equation}
Since $K$ is a compact convex set and $0\in \interior (K)$, the map $\nu \in \mathbb{S}^1 \mapsto \hat r(\nu)\in (0,+\infty)$ is well-defined and continuous. Hence $\hat r$ is uniformly continuous on $\mathbb{S}^1$ and it follows that, for every $\epsilon \in (0,1)$, there is $\hat\theta_\epsilon>0$ such that 
\begin{equation} \label{e:unif-cont-rtheta}
\measuredangle (\nu,\nu') \leq \hat\theta_\epsilon \, \Rightarrow\, |\hat r(\nu) - \hat r(\nu')| \leq \epsilon.
\end{equation}

The next lemma would be quite obvious if the profile $\phi$ were known to be continuous on $K$. However, as already stressed, in full generality $\phi$ is only known to be continuous on $\interior (K)$ and to satisfy~\eqref{e:radial-continuity-phi}, see for instance Example~\ref{ex:non-cont-profile} and Example~\ref{ex:non-cont-profile-with-bcp}.

\begin{lemma} \label{lem:a.e.vs-every}
For every $\epsilon \in (0,1)$, $\theta \in (-\pi/2, \pi/2)$, $p_0=(v_0,z_0) \in \partial B$ with $v_0\not=0$, and $q=(v,z) \in \partial (\epsilon,\theta,p_0)$, there is a positive sequence $(\tau_k)$ converging to $0$ such that, for every $k\geq 1$ and $\tau \in (0,\tau_k)$, there is $q'=(v',z') \in  \partial (\epsilon,\theta + \tau, p_0)$ such that $\|v' - v \| < 1/k$ and $|z'-z| < 1/k$. 
\end{lemma}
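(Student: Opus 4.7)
The plan is to construct, for each $k \ge 1$, a threshold $\tau_k > 0$ and, for every $\tau \in (0, \tau_k)$, an explicit point $q'(\tau) \in \partial(\epsilon, \theta+\tau, p_0)$ within the required tolerances of $q$. Write $\nu_\tau$ for the unit vector obtained by rotating $v_0/\|v_0\|$ by the oriented angle $\tau$, so that $v = s\,\nu_\theta$ with $s := \|v\| > 0$; the argument then splits according to whether $v \in \interior(K)$ or $v \in \partial K$.

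When $v \in \interior(K)$ we have $s < \hat r(\nu_\theta)$, and since $(v,z) \in \partial B$ with $v \in \interior(K)$, necessarily $z = \phi(v)$ or $z = -\phi(-v)$. By continuity of $\hat r$, for $\tau$ small we still have $s < \hat r(\nu_{\theta+\tau})$, so $v'(\tau) := s\,\nu_{\theta+\tau}$ lies in $\interior(K)$; setting $z'(\tau) := \phi(v'(\tau))$ or $-\phi(-v'(\tau))$ accordingly yields a point $q'(\tau) \in \partial B$ of oriented angle $\theta+\tau$ from $v_0$. Continuity of $\phi$ on $\interior(K)$ gives $q'(\tau) \to q$ as $\tau \to 0$, so one chooses $\tau_k$ small enough that $\|v'(\tau)-v\| < 1/k$, $|z'(\tau)-z| < 1/k$, and $|z'(\tau)-z_0| < \epsilon$ all hold for $\tau \in (0,\tau_k)$.

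When $v \in \partial K$ the function $\phi$ may be discontinuous at $v$ (cf.~Example~\ref{ex:non-cont-profile}), and the main tools are~\eqref{e:radial-continuity-phi} together with the fact that, since $K$ is convex with $0 \in \interior(K)$, the auxiliary point $w_\delta(\tau) := (1-\delta)\,\hat r(\nu_{\theta+\tau})\,\nu_{\theta+\tau}$ belongs to $\interior(K)$ for every $\delta > 0$ and every small $|\tau|$. Given $k$, I first pick $\delta_k > 0$ small enough that $\delta_k\|v\| < 1/(2k)$ and, using~\eqref{e:radial-continuity-phi} at $v$ and at $-v$, both $\phi((1-\delta_k)v)$ and $\phi(-(1-\delta_k)v)$ are within $1/(2k)$ of $\phi(v)$ and $\phi(-v)$ respectively. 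Then, by continuity of $\hat r$ and of $\phi$ on $\interior(K)$, I pick $\tau_k > 0$ so that $w_{\delta_k}(\tau)$, $\phi(w_{\delta_k}(\tau))$, and $\phi(-w_{\delta_k}(\tau))$ remain within $1/(2k)$ of their values at $\tau = 0$ for every $\tau \in (0,\tau_k)$. If $z = \phi(v)$ (respectively $z = -\phi(-v)$), taking $q'(\tau) := (w_{\delta_k}(\tau), \phi(w_{\delta_k}(\tau)))$ (respectively its lower-sheet analogue) and combining the two triangle-inequality estimates gives $\|v'-v\| < 1/k$ and $|z'-z| < 1/k$.

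The remaining sub-case $-\phi(-v) < z < \phi(v)$ with $v \in \partial K$ is the main obstacle, and is treated by a connectedness argument. I would consider the continuous path in $\partial B$ at angle $\theta+\tau$ obtained by concatenating the upper arc $\{(r\,\nu_{\theta+\tau}, \phi(r\,\nu_{\theta+\tau})) : (1-\delta_k)\hat r(\nu_{\theta+\tau}) \le r \le \hat r(\nu_{\theta+\tau})\}$, whose continuity at the right endpoint follows from~\eqref{e:radial-continuity-phi}, then the vertical segment at $\hat r(\nu_{\theta+\tau})\,\nu_{\theta+\tau}$, and finally the symmetric lower arc. The $z$-coordinates of the two extreme endpoints of this path, $\phi(w_{\delta_k}(\tau))$ and $-\phi(-w_{\delta_k}(\tau))$, lie within $1/k$ of $\phi(v)$ and $-\phi(-v)$ by the choices above, and therefore, for $\delta_k$ and $\tau_k$ small enough, strictly bracket $z$. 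The intermediate value theorem then produces a point $q'(\tau)$ on the path with $z'(\tau) = z$; its $v$-component has the form $r\,\nu_{\theta+\tau}$ with $r \in [(1-\delta_k)\hat r(\nu_{\theta+\tau}), \hat r(\nu_{\theta+\tau})]$ and is thus within $1/k$ of $v$. A naive angular perturbation would fail here because upper semicontinuity of $\phi$ allows the vertical segment at $\theta+\tau$ to shrink discontinuously; it is the connectedness of the cross-section, combined with~\eqref{e:radial-continuity-phi}, that makes the argument go through.
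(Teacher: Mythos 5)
Your argument is correct and relies on the same toolkit as the paper's proof — shrinking radially into $\interior(K)$, the radial limit~\eqref{e:radial-continuity-phi}, continuity of $\hat r$, and the intermediate value theorem — but organizes the cases differently. The paper first reduces to $z \ge 0$ by the symmetry $q \mapsto q^{-1}$, $p_0 \mapsto p_0^{-1}$, then distinguishes $z = \phi(v)$ from $0 \le z < \phi(v)$; in the latter case it compares $z$ with $\phi\bigl(\hat r(\nu_{\theta+\tau})\,\nu_{\theta+\tau}\bigr)$, taking a point on the vertical segment of $\partial B$ at angle $\theta+\tau$ if $z$ is still below, and otherwise running the IVT on the upper arc alone. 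You instead split on $v \in \interior(K)$ versus $v \in \partial K$, which simplifies the interior case since no radial shrinking is needed there, and in the boundary case with $z$ strictly between $-\phi(-v)$ and $\phi(v)$ you run a single IVT along the concatenated upper-arc, vertical-segment, lower-arc path whose continuity at the two junctions you correctly attribute to~\eqref{e:radial-continuity-phi}; this trades the paper's two sub-cases for a slightly more elaborate path. The only omission worth flagging is that the membership constraint $|z' - z_0| < \epsilon$, which you enforce explicitly when $v \in \interior(K)$, also needs attention in the boundary case $z = \phi(v)$ or $z = -\phi(-v)$, where $z' \ne z$ in general: one must additionally take $\tau_k$ small relative to $\epsilon - |z - z_0| > 0$, exactly as in your interior case (in the strict-inequality boundary case it is automatic, since the IVT yields $z' = z$).
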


\begin{proof} Assume with no loss of generality that $q=(v,z) \in \partial (\epsilon,\theta,p_0)$ with $z\geq 0$. Otherwise one may argue considering $q^{-1} \in \partial (\epsilon,\theta,p_0^{-1})$. We divide the proof into two cases.

\smallskip

\noindent Case 1: If $z=\phi(v)$. By~\eqref{e:radial-continuity-phi} and the continuity of $\phi$ on $\interior(K)$, we have $\phi(sv) \rightarrow \phi(v)$ when $s\rightarrow 1$ and one can find a positive increasing sequence $(s_k)$ converging to 1 such that, for every $k\geq 1$, 
\begin{equation*}
0<D(1-s_k)  < \frac{1}{2k} ,
\end{equation*}
where $D$ is given by~\eqref{e:def-D},
\begin{equation*}
|\phi(s_k v) - \phi(v) | < \frac{1}{k},
\end{equation*}
and
\begin{equation*}
|\phi(s_k v) - z_0| < \epsilon.
\end{equation*}
By continuity of $\phi$ at $s_k v\in \interior(K)$, one can find $\omega_k \in (0,1/2k)$ such that
\begin{equation*}
\|v' - s_k v \| < \omega_k \Rightarrow v'\in \interior(K), \, |\phi(v') - \phi(v) | < \frac{1}{k} \text{ and } |\phi(v') - z_0| < \epsilon.
\end{equation*}
On the other hand, one can find $\tau_k >0$ such that, for every $\tau \in (0,\tau_k)$,
\begin{equation} \label{e:delta_k}
\|v'\| = s_k\| v\| \text{ and } \angle (v,v') = \tau \Rightarrow \|v' - s_k v \| < \omega_k.
\end{equation}
Then, choosing $v'$ such that $\|v'\|=s_k\|v\|$ and $\angle (v,v') = \tau$, and setting $z':=\phi(v')$, we get that $q':=(v',z') \in \partial (\epsilon, \theta+\tau, p_0)$ with 
\begin{equation*}
\|v'-v\| \leq \|v'-s_k v \| + \|s_k v - v\| < \omega_k + D(1-s_k)  < \frac{1}{k},
\end{equation*}
and $|z'-z| = |\phi(v') - \phi(v)| < 1/k$. 

\smallskip

\noindent Case 2: If $0\leq z < \phi(v)$, then $v\in \partial K$. By~\eqref{e:radial-continuity-phi} one can find a positive increasing sequence $(s_k)$ converging to 1 such that, for every $k\geq 1$,
\begin{equation*}
0<2D (1 - s_k) <\frac{1}{3k} ,
\end{equation*}
where $D$ is given by~\eqref{e:def-D}, and
\begin{equation*}
0\leq z < \phi(s_k v).
\end{equation*}
By continuity of $\phi$ at $s_k v\in \interior(K)$, one can find $\omega_k \in (0,1/3k)$ such that
\begin{equation} \label{e:omega_k}
\|v' - s_k v \| < \omega_k \Rightarrow v'\in \interior(K) \text{ and } 0\leq z < \phi(v')
\end{equation}
and one can find $\tau_k >0$ such that, for every $\tau \in (0,\tau_k)$,~\eqref{e:delta_k} holds. Then, for $\tau \in (0,\tau_k)$, we let $u_\tau\in \mathbb{S}^1$ be such that $\angle (v , u_\tau ) = \tau$ and we set $v_\tau := \hat r (u_\tau) u_\tau \in \partial K$.

\smallskip

If $0\leq z \leq \phi(v_\tau)$, then $(v_\tau,z) \in \partial(\epsilon,\theta + \tau,p_0)$ and we set $q':=(v',z')$ with $v':=v_\tau$ and $z':=z$. Since $v\in \partial K$, we have $\hat r(v/\|v\|) = \|v\|$ and $v = \hat r(v/\|v\|) v/\|v\|$. Hence 
\begin{equation*}
\begin{split}
\|v' - v\| &\leq \| \hat r (u_\tau) u_\tau - \hat r(v/\|v\|)u_\tau \| + \hat r (v/\|v\|) \| u_\tau - \frac{v}{\|v\|}\| \\
&\leq |\hat r (u_\tau) - \hat r(v/\|v\|)| + \tau \|v\|.
\end{split}
\end{equation*} 
Choosing $\tau_k$ small enough, we get by~\eqref{e:unif-cont-rtheta} that, for $\tau \in (0,\tau_k)$, $\|v' - v\| < 1/k$.

If $\phi(v_\tau) < z$, then, since $\phi(s_k \|v\| u_\tau) >z$ by~\eqref{e:delta_k} and~\eqref{e:omega_k}, and since $\phi$ is continuous on the segment from $s_k \|v\| u_\tau$ to $v_\tau$, one can find $t\in (s_k \|v\|,\hat r (u_\tau))$ such that $\phi(t u_\tau) = z$. Then we set $q':=(v',z')$ with $v':= t u_\tau$ and $z':=\phi(t u_\tau)$. We have $q'\in \partial(\epsilon, \theta + \tau, p_0)$ with 
\begin{equation*}
\begin{split}
\|v' - v\| &\leq \| t u_\tau- s_k \|v\| u_\tau \| + \|s_k \|v\| u_\tau  - s_k v\| +\|s_k v - v\|\\
&\leq (t - s_k \|v\|) + \omega_k + (1-s_k) \|v\|\\
&\leq \hat r (u_\tau) - s_k \hat r (v/\|v\|) + \omega_k + (1-s_k) \|v\|\\
&\leq \hat r (u_\tau) - \hat r (v/\|v\|) + (1-s_k) \hat r (v/\|v\|) + \omega_k + (1- s_k) \|v\|\\
&\leq \hat r (u_\tau) - \hat r (v/\|v\|) + 2 D (1-s_k)  + \omega_k \\
&\leq \hat r (u_\tau) - \hat r (v/\|v\|) + \frac{2}{3k}~.
\end{split}
\end{equation*}
Hence, choosing $\tau_k$ small enough, we get by~\eqref{e:unif-cont-rtheta} that, for $\tau \in (0,\tau_k)$, $\|v' - v \| < 1/k$.
\end{proof}

From now on, we fix $p_0=(v_0,z_0) \in \partial B$ with $v_0\not= 0$ and $z_0\geq 0$.  By a change of orthonormal coordinates that preserves the orientation in $\R^2$, we can assume with no loss of generality that $v_0=(a_0,0)$ for some $a_0>0$. Then, for $\theta \in \R$, we set $\nu_\theta := (\cos \theta, \sin \theta)\in \mathbb{S}^1$, so that $v_0 = a_0 \nu_0$ and $\angle (v_0,\nu_\theta) = \theta$, and we set $\hat r (\theta) := \hat r (\nu_\theta)$ where $\hat r$ is defined in~\eqref{e:def-hat-r}. Given $\epsilon  \in (0,1)$, we set $\theta_\epsilon := \min(\epsilon,\hat\theta_\epsilon)$ where $\hat \theta_\epsilon$ is given by~\eqref{e:unif-cont-rtheta}.

\medskip

For $x, y\in \R$, we use the notation $x\lesssim y$ to mean that there is an absolute constant $C>0$ such that $x\leq C y$.

\medskip

The next lemma gives a bound on $|\|v_0\| - \|v\||$ in terms of $\sqrt{\epsilon}$ for $q=(v,z) \in \partial (\epsilon,\theta,p_0)$, provided $\epsilon \in (0,\epsilon_0)$ and $\theta \in (-\theta_\epsilon,\theta_\epsilon)$, where $\epsilon_0$ is given by Lemma~\ref{lem:prop-of-phi}.

\begin{lemma} \label{lem:v_0-v}
For every $\epsilon \in (0,\epsilon_0)$, $\theta \in (-\theta_\epsilon,\theta_\epsilon)$, and $q=(v,z) \in \partial (\epsilon,\theta,p_0)$, we have $|\|v_0\| - \|v\|| \lesssim \sqrt{\epsilon}$.
\end{lemma}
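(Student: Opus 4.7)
The plan is to establish the two bounds $\|v\|-\|v_0\|\lesssim\sqrt{\epsilon}$ and $\|v_0\|-\|v\|\lesssim\sqrt{\epsilon}$ separately, each by contradiction, using Lemma~\ref{lem:prop-of-phi} together with the uniform continuity estimate~\eqref{e:unif-cont-rtheta} on $\hat r$. The case analysis is driven by the description of $\partial B$ from Proposition~\ref{prop:LDNG}: for $q=(v,z)\in\partial B$ with $v\neq 0$, one of the following holds: (i) $v\in\interior(K)$ and $z=\phi(v)\ge 0$; (ii) $v\in\interior(K)$ and $z=-\phi(-v)\le 0$; (iii) $v\in\partial K$ with $\|v\|=\hat r(\theta)$ and $-\phi(-v)\le z\le\phi(v)$. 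Because $\phi\ge 0$ on $\interior(K)$ by~\eqref{e:positivity-profile}, one has $z\le\phi(v)$ in all three cases. Similarly, since $z_0\ge 0$, either $v_0\in\interior(K)$ with $z_0=\phi(v_0)$, or $v_0\in\partial K$ with $\|v_0\|=\hat r(0)$ and $z_0\le\phi(v_0)$.

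For the bound on $\|v\|-\|v_0\|$ I would split on the position of $p_0$. If $v_0\in\partial K$, then $v\in K$ forces $\|v\|\le\hat r(\theta)$, and $\theta_\epsilon\le\hat\theta_\epsilon$ combined with~\eqref{e:unif-cont-rtheta} yields $\hat r(\theta)\le\hat r(0)+\epsilon=\|v_0\|+\epsilon$. If instead $v_0\in\interior(K)$, then $z_0=\phi(v_0)$, and the inequalities $z\le\phi(v)$ and $z>z_0-\epsilon$ combine to $\phi(v_0)-\phi(v)<\epsilon$ uniformly across cases (i)--(iii) for $q$; the contrapositive of Lemma~\ref{lem:prop-of-phi}, applied with $v$ and $v_0$ in the roles of $v'$ and $w$ and using $\measuredangle(v,v_0)=|\theta|<\theta_\epsilon\le\epsilon$, then rules out $\|v\|-\|v_0\|\ge\sqrt{2\epsilon/m}$.

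For the bound on $\|v_0\|-\|v\|$ I would instead split on the position of $q$. Case (iii) is immediate from $\|v_0\|\le\hat r(0)\le\hat r(\theta)+\epsilon=\|v\|+\epsilon$. In case (i), $\phi(v)=z<z_0+\epsilon\le\phi(v_0)+\epsilon$, and Lemma~\ref{lem:prop-of-phi} with $v_0,v$ in the roles of $v',w$ gives the required bound. The delicate case is (ii), which I expect to be the main obstacle: no useful a priori control on $\phi(-v_0)$ is available, so the contradiction must exploit the reflection symmetry $K=-K$. Applying Lemma~\ref{lem:prop-of-phi} with $-v_0$ and $-v$ in the roles of $v'$ and $w$ (legitimate since $\measuredangle$ is invariant under negation and $-v,-v_0\in K\setminus\{0\}$), the assumption $\|v_0\|-\|v\|\ge\sqrt{2\epsilon/m}$ would yield $\phi(-v)\ge\phi(-v_0)+\epsilon\ge\epsilon$ from $\phi\ge 0$, whence $z=-\phi(-v)\le-\epsilon$; combined with $z_0\ge 0$ this contradicts $|z-z_0|<\epsilon$. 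Up to shrinking $\epsilon_0$ so that $\epsilon<\sqrt{2\epsilon/m}$ on $(0,\epsilon_0)$, both the $\epsilon$ bound coming from~\eqref{e:unif-cont-rtheta} and the $\sqrt{2\epsilon/m}$ bound coming from Lemma~\ref{lem:prop-of-phi} are absorbed by $\lesssim\sqrt{\epsilon}$ with absolute constants.
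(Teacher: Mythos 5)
Your argument is correct and reaches the same conclusion with the same two main tools as the paper (Lemma~\ref{lem:prop-of-phi} and the uniform-continuity bound~\eqref{e:unif-cont-rtheta} on $\hat r$), but the case decomposition is genuinely different. The paper splits into five cases indexed by the joint $(z_0,z)$ configuration and proves the two-sided bound in each. You decouple the two one-sided bounds $\|v\|-\|v_0\|$ and $\|v_0\|-\|v\|$ and, for each, case on the position of a \emph{single} endpoint ($p_0$ for the first, $q$ for the second), exploiting that $z\le\phi(v)$ and $z_0\le\phi(v_0)$ hold in all configurations. This buys a cleaner treatment of the $z<0$ case: where the paper first establishes the two intermediate estimates $0\le\hat r(\theta)-r\lesssim\sqrt\epsilon$ and $0\le\hat r(0)-a_0\lesssim\sqrt\epsilon$ by comparing $\phi(-v)$ against boundary values of $\phi$ on the antipodal ray, you instead apply Lemma~\ref{lem:prop-of-phi} directly to $-v_0$ and $-v$ (using $K=-K$ and the angle invariance under negation), use $\phi(-v_0)\ge0$ to get $z\le-\epsilon$, and contradict $|z-z_0|<\epsilon$ outright, with no detour through $\hat r$. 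The final normalization remark (absorbing both $\epsilon$ and $\sqrt{2\epsilon/m}$ into $\lesssim\sqrt\epsilon$) is fine as stated since $\epsilon<1$.
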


\begin{proof} Let $\epsilon \in (0,\epsilon_0)$, $\theta \in (-\theta_\epsilon,\theta_\epsilon)$, and  $q=(v,z) \in \partial (\epsilon,\theta,p_0)$. We write $v=r\nu_\theta$ for some $r>0$. To get the conclusion of the lemma, we need to prove that 
\begin{equation} \label{e:a-r}
	|a_0-r| \lesssim \sqrt{\epsilon}~.
	\end{equation} 
We divide the proof of~\eqref{e:a-r} into five cases.

	\smallskip
	
	\noindent Case 1: If $z_0 = \phi(v_0)$ and $z=\phi(r\nu_\theta)$ then~\eqref{e:a-r} follows from Lemma~\ref{lem:prop-of-phi}.
	
	\smallskip
	
	\noindent Case 2: If $0\leq z_0 < \phi(v_0)$ and $z=\phi(r\nu_\theta)$, we have $\epsilon > z-z_0 > \phi(r\nu_\theta) - \phi(v_0)$. Then it follows from Lemma~\ref{lem:prop-of-phi} that $a_0-r \lesssim \sqrt{\epsilon}$. If $r\leq a_0$, this implies~\eqref{e:a-r}. If $r>a_0$, we have $a_0=\hat r(0)< r \leq \hat r(\theta)$ and the conclusion follows from~\eqref{e:unif-cont-rtheta}.
	
	\smallskip
	
	\noindent Case 3: If $z_0 = \phi(v_0)$ and $0\leq z < \phi(r\nu_\theta)$, the argument is similar to the previous case. Indeed we have $\epsilon > z_0 - z > \phi(v_0) - \phi(r\nu_\theta)$ and it follows from Lemma~\ref{lem:prop-of-phi} that $r-a_0 \lesssim \sqrt{\epsilon}$. If $r\geq a_0$ this implies~\eqref{e:a-r}. If $r<a_0$, we have $r=\hat r(\theta) < a_0 \leq \hat r(0)$ and the conclusion follows from~\eqref{e:unif-cont-rtheta}.
	
	\smallskip
	
	\noindent Case 4: If $0\leq z_0 < \phi(v_0)$ and $0\leq z < \phi(r\nu_\theta)$ then $a_0=\hat r(0)$, $r=\hat r(\theta)$, and~\eqref{e:a-r} follows from~\eqref{e:unif-cont-rtheta}.
	
	\smallskip
	
	\noindent Case 5: If $0\leq z_0 \leq \phi(v_0)$ and $z<0$, let us first prove that 
	\begin{equation} \label{e:r(theta)-r}
	0\leq \hat r(\theta) - r \lesssim \sqrt{\epsilon} \,\text{ and }\, 0\leq \hat r(0) - a_0 \lesssim \sqrt{\epsilon}.
	\end{equation}
	If $r=\hat r(\theta)$, the first part of~\eqref{e:r(theta)-r} is trivial. If $r<\hat r(\theta)$ then $z=-\phi(-r\nu_\theta)$. Hence $\epsilon > -z+z_0 \geq -z \geq \phi(r \nu_{\theta+\pi}) - \phi(\hat r(\theta+\pi)\nu_{\theta+\pi})$ and Lemma~\ref{lem:prop-of-phi} implies $\hat r(\theta+\pi) - r \lesssim \sqrt{\epsilon}$. Since $K=-K$, we have $\hat r(\theta+\pi) = \hat r(\theta)$ and the first part of~\eqref{e:r(theta)-r} follows. Next, if $0\leq z_0 < \phi(v_0)$ then $a_0=\hat r(0)$ and the second part of~\eqref{e:r(theta)-r} is trivial. If $z_0=\phi(v_0)$ then $\epsilon > z_0 - z \geq \phi(v_0) \geq \phi(v_0) - \phi(\hat r(0) \nu_0)$. Hence Lemma~\ref{lem:prop-of-phi} implies $0\leq \hat r(0) - a_0\lesssim \sqrt{\epsilon}$ and this concludes the proof of~\eqref{e:r(theta)-r}. Finally~\eqref{e:r(theta)-r} together with~\eqref{e:unif-cont-rtheta} give $|a_0-r| \leq |a_0-\hat r(0)|+|\hat r(0)-\hat r(\theta)|+|\hat r(\theta)-r| \lesssim \sqrt{\epsilon}$ which concludes the proof of~\eqref{e:a-r}.	
\end{proof}

Since $K$ is a convex symmetric set that contains $0$ in its interior, there is an absolute constant $\epsilon_1 \in (0,1)$ such that, for every $\theta \in (-\epsilon_1,\epsilon_1)$, we have $\{ s>0 : \, s\nu_\theta - v_0 \in \interior(K)\} \not=\emptyset$ and $l_\theta(s):= s\nu_\theta - v_0 \in \interior (K)$ for every $s\in (0,s_\theta)$ where $s_\theta := \sup \{ s>0 : \,l_\theta(s)  \in \interior(K)\}$.

\medskip

From now on, we let $\epsilon \in (0,\min(\epsilon_0,\epsilon_1))$ be an absolute constant to be chosen small enough later. We let $\theta \in (-\theta_\epsilon,\theta_\epsilon)$ be such that, for a.e.~$s\in (0,s_\theta)$ the following properties hold. First, the maps $s\mapsto \phi(l_\theta(s))$ and $s\mapsto \phi(-l_\theta(s))$ are differentiable at $s$ with derivatives $\langle \nabla \phi(l_\theta(s)),\nu_\theta\rangle$ and $-\langle \nabla \phi(-l_\theta(s)),\nu_\theta\rangle$ respectively. Second, for every $w\in \R^2\setminus\{0\}$, 
\begin{equation} \label{e:cdt-theta-1}
\measuredangle (l_\theta(s),w) \leq \alpha \Rightarrow \langle \nabla\phi(l_\theta(s)) , w \rangle \leq -m \|l_\theta(s)\| \|w\|,
\end{equation}
and
\begin{equation} \label{e:cdt-theta-1-bis}
\measuredangle (-l_\theta(s),w) \leq \alpha \Rightarrow \langle \nabla\phi(-l_\theta(s)) , w \rangle \leq -m \|l_\theta(s)\| \|w\|.
\end{equation}
Third,
\begin{equation} \label{e:cdt-theta-2}
\|l_\theta(s)\|\leq \kappa \Rightarrow \|\nabla\phi(-l_\theta(s))\| \leq M \|l_\theta(s)\|.
\end{equation}
Recall that $\phi \in \Lip_{loc}(\interior(K))$ and we are assuming that~\eqref{e:phi-2} and~\eqref{e:phi-3} hold, hence these properties hold for a.e.~$\theta \in (-\theta_\epsilon,\theta_\epsilon)$. For such a $\theta\in (-\theta_\epsilon,\theta_\epsilon)$, we consider $q=(v,z)\in \partial(\epsilon,\theta,p_0)$ and we will prove that
\begin{equation*} \label{e:conclusion}
p_0^{-1}\cdot \delta_t(q) \in B \text{ and } \delta_t(q)\cdot p_0^{-1} \in B \text{ for every } t\in [0,\epsilon]
\end{equation*}
provided $\epsilon$ is chosen small enough. By Lemma~\ref{lem:a.e.vs-every} and the fact that $B$ is closed, this will complete the proof of Lemma~\ref{lem:sufficient-for-bcp}.

\medskip

We write $v = r\nu_\theta$ for some $r>0$ and we set 
\begin{equation} \label{e:def-gamma}
\gamma(t) := tv - v_0 = rt\nu_\theta - v_0,
\end{equation}
\begin{equation} \label{e:def-k}
k(t) := \|\gamma(t)\|=  \sqrt{ r^2 t^2 - 2a_0rt\cos\theta + a_0^2 }~.
\end{equation}

\medskip

For $t>0$, we have
\begin{equation*}
	p_0^{-1}\cdot \delta_t (q) = \left(\gamma(t), -z_0+t^2z - \frac12 a_0 rt\sin\theta \right)
	\end{equation*}
and 
\begin{equation*}
	\delta_t (q) \cdot p_0^{-1} = \left(\gamma(t), -z_0+t^2z + \frac12 a_0 rt\sin\theta \right).
	\end{equation*}

\medskip

Then $p_0^{-1}\cdot \delta_t (q)\in B$  and $\delta_t (q) \cdot p_0^{-1} \in B$ if and only if
\begin{gather}
\label{e:cdt-1}
	\gamma(t) \in K, \\
\label{e:cdt-2}
	-z_0+t^2z +\frac12 a_0 rt\sin|\theta|
	\le \phi(\gamma(t)), \\
\label{e:cdt-3}
	-\phi(-\gamma(t))
	\le -z_0+t^2z -\frac12 a_0 rt\sin|\theta|.
\end{gather}

\medskip
We first prove that~\eqref{e:cdt-1} holds for every $t\in [0,1]$ provided $\epsilon$ is chosen small enough. We have 
\begin{equation} \label{e:k(1)}
k(1) \lesssim \sqrt{\epsilon}~.
\end{equation}
Indeed, it follows from~\eqref{e:a-r} that 
\begin{equation*} 
\begin{split}
	|r\cos\theta-a_0| & \le a_0(1-\cos\theta) + |a_0-r|\cos\theta \\
			&\le D (1-\cos\epsilon) + |a_0-r| \\
			&\lesssim \sqrt{\epsilon}~.
			 \end{split}
	\end{equation*}
On the other hand, $|r\sin\theta| \le D \sin\epsilon$, and hence~\eqref{e:k(1)} follows. Choosing $\epsilon$ small enough, we get that 
\begin{equation} \label{e:k(1)-kappa}
k(1)\leq \kappa
\end{equation}
where $\kappa$ is given by~\eqref{e:phi-3}. Hence $\gamma(1) \in \interior(K)$. On the other hand, $\gamma(0)=(-a_0,0) \in K$. Since $K$ is convex, we get that $\gamma(t) \in \interior (K)$ for every $t\in (0,1]$. Hence~\eqref{e:cdt-1} holds for every $t\in [0,1]$  as claimed.

\medskip 

We need now to prove that~\eqref{e:cdt-2} and~\eqref{e:cdt-3} hold for every $t \in [0,\epsilon]$ provided $\epsilon$ is chosen small enough.

\medskip

For the proof of~\eqref{e:cdt-2}, we set 
$$f(t):= t^2z +\frac12 a_0 rt\sin|\theta| -z_0 - \phi(\gamma(t)) .$$

\begin{lemma} We have $f(t) \leq 0$, that is,~\eqref{e:cdt-2} holds, for every $t\in [0,\epsilon]$ provided $\epsilon$ is chosen small enough.
\end{lemma}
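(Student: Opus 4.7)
The plan is to split into two cases depending on the size of $a_0 = \|v_0\|$ relative to the parameter $\kappa$ from~\eqref{e:phi-3}. In both cases, the trivial baseline $f(0) = -z_0 - \phi(-v_0) \leq 0$ follows from $z_0\ge 0$ and $\phi(-v_0)\ge 0$ (since $-v_0\in K$ by symmetry).

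In the easy regime $a_0 \leq \kappa/2$, one can avoid the derivative altogether and simply use~\eqref{e:def-eta}. Indeed, for $\epsilon$ small enough that $D\epsilon \leq \kappa/2$, the bound $\|\gamma(t)\| \leq a_0 + rt \leq \kappa$ holds for every $t\in[0,\epsilon]$, so $\phi(\gamma(t))\ge \eta$. Using $|z|\le \max_K\phi =: \phi_{\max}$ and $\sin|\theta|\le \theta_\epsilon \le \epsilon$, one would then conclude
\[
f(t)\le \epsilon^2 \phi_{\max} + \tfrac12 D^2 \epsilon^2 - \eta \le 0
\]
once $\epsilon$ is small enough.

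In the main regime $a_0 > \kappa/2$, I would work directly with $f'(t)$. For a.e.~$t\in(0,\epsilon]$, using that $\theta$ was chosen to satisfy~\eqref{e:cdt-theta-1} a.e.,
\[
f'(t) = 2tz + \tfrac12 a_0 r\sin|\theta| - r\,\langle \grad\phi(\gamma(t)),\nu_\theta\rangle.
\]
A direct computation in the orthonormal basis $(\nu_0,\nu_0^\perp)$ yields $\omega(-\gamma(t),\nu_\theta) = a_0\sin\theta$ and $\langle -\gamma(t),\nu_\theta\rangle = a_0\cos\theta - rt$. Combined with the lower bound $k(t)\ge a_0 - rt \ge \kappa/4$ and $r\ge \kappa/4$ (from Lemma~\ref{lem:v_0-v}) valid for small $\epsilon$, this gives $\sin\measuredangle(\gamma(t),-\nu_\theta) \le 4D\epsilon/\kappa$ while the cosine is positive, hence $\measuredangle(\gamma(t),-\nu_\theta)\le \alpha$ for $\epsilon$ small enough. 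Applying~\eqref{e:cdt-theta-1} at $l_\theta(rt)=\gamma(t)$ with $w = -\nu_\theta$ then yields $\langle\grad\phi(\gamma(t)),\nu_\theta\rangle \ge m\,k(t)\ge m\kappa/4$, so
\[
f'(t) \le 2\epsilon\,\phi_{\max} + \tfrac12 D^2\epsilon - m\kappa^2/16 < 0
\]
for small $\epsilon$.

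Since $\phi\in\Lip_{loc}(\interior(K))$ and $\gamma(t)\in\interior(K)$ for $t\in(0,1]$, the function $f$ is locally Lipschitz on $(0,\epsilon]$, hence non-increasing there by the a.e.~derivative bound. By monotonicity $\lim_{s\to 0^+}f(s)$ exists in $[-\infty,+\infty]$, so $\lim_{s\to 0^+}\phi(\gamma(s))$ exists as well, and this limit is $\ge 0$ because $\phi\ge 0$. Consequently, for every $t\in(0,\epsilon]$,
\[
f(t)\le \lim_{s\to 0^+} f(s) = -z_0 - \lim_{s\to 0^+}\phi(\gamma(s)) \le -z_0 \le 0.
\]
The main obstacle is the angle estimate in the second case, which must be uniform in the remaining free parameters $(\theta,r,z)$; a subtle secondary point is that one cannot assume continuity of $\phi$ at $\gamma(0) = -v_0$, since $-v_0$ may lie on $\partial K$ where the profile can genuinely jump (cf.~Example~\ref{ex:non-cont-profile-with-bcp}), which is why the argument closes via monotonicity together with $\phi\ge 0$ rather than a direct continuity argument at $t=0$.
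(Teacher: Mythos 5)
Your argument is correct and closely parallels the paper's own proof: the same two-case split on $a_0$ relative to $\kappa$, a direct pointwise bound $f(t)\leq\epsilon^2\phi(0)+\tfrac12 D^2\epsilon^2-\eta$ when $a_0$ is small, and the a.e.~bound $f'(t)\leq 0$ via~\eqref{e:cdt-theta-1} when $a_0$ is large, with only cosmetic differences (threshold $\kappa/2$ rather than $\kappa$, and controlling $\measuredangle(\gamma(t),-\nu_\theta)$ through its sine rather than its cosine). Your closing step is in fact a touch more scrupulous than the paper's, which simply invokes $f(0)\leq 0$: you correctly note that $\phi$ need not be continuous at $\gamma(0)=-v_0$ when $-v_0\in\partial K$, and you close via monotonicity of $f$ on $(0,\epsilon]$ together with $\phi\ge 0$ to obtain $f(t)\le\lim_{s\to 0^+}f(s)\le -z_0\le 0$.
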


\begin{proof} First, note that since $k(t)$ is a convex function, it follows from~\eqref{e:k(1)-kappa} that, for every $t\in [0,1]$,
\begin{equation} \label{e:k(t)}
0\leq k(t) \leq \max\{k(0),k(1)\} = \max\{a_0,k(1)\} \leq \max\{a_0,\kappa\}.
\end{equation}
We now divide the proof into two cases.

\smallskip

\noindent Case 1: If $a_0\leq \kappa$ then $k(t)\leq \kappa$ for every $t\in [0,1]$ by~\eqref{e:k(t)} and it follows that $\phi(\gamma(t)) \geq \eta$ where $\eta$ is given by~\eqref{e:def-eta}. Next, it follows from~\eqref{e:phi-2} that, for a.e.~$v\in\interior(K)$, we have $2(\phi(v) - \phi(0) )\leq -m \|v\|^2$. By continuity of $\phi$ on $\interior(K)$ together with~\eqref{e:radial-continuity-phi}, this inequality actually holds true for every $v\in K$. This implies in particular that 
\begin{equation} \label{e:max-phi-at-0}
\phi(v) < \phi(0) \, \text{ for every } v\in K \setminus\{0\}~.
\end{equation}
Hence $z\leq \phi(r\nu_\theta) < \phi(0)$. Since $z_0 \geq 0$, we get for $t\in [0,\epsilon]$
$$f(t) \leq \epsilon^2\phi(0) + \frac{D^2\epsilon}{2} \sin \epsilon - \eta \leq 0$$
provided $\epsilon$ is chosen small enough.

\smallskip

\noindent Case 2: If $a_0\geq \kappa$, then, for $t\in [0,\epsilon]$, we have $rt\leq D \epsilon \leq \kappa / 2 < a_0 \cos \epsilon \leq a_0 \cos \theta \leq a_0$ provided $\epsilon$ is chosen small enough. Hence $0< a_0 \cos \epsilon - tr \leq a_0 \cos \theta - tr \leq a_0 - tr \leq k(t) \leq a_0$. We also have $r/a_0 \leq D/\kappa$. Therefore 
\begin{equation*}
\begin{split}
	\cos \measuredangle (\gamma(t), - \gamma'(t)) &= \frac{\langle \gamma(t),-\gamma'(t) \rangle}{\|\gamma(t)\| \|\gamma'(t)\|} 
	= \frac{a_0\cos\theta - rt }{ k(t) } \\
	&\ge \frac{a_0\cos\epsilon - rt }{ a_0 } \geq \cos\epsilon - \frac{D}{\kappa}\epsilon \geq \cos\alpha
\end{split}
	\end{equation*}
where $\alpha$ is given by~\eqref{e:phi-2}, and where the last inequality holds provided $\epsilon$ is chosen small enough. Hence, by choice of $\theta$ (recall~\eqref{e:cdt-theta-1} and remember that $\gamma(t) = l_\theta(rt)$), we have for a.e.~$t\in [0,\epsilon]$ 
	\[
	\langle \grad\phi(\gamma(t)) , - \gamma'(t) \rangle \le - m r  k(t) .
	\]
Finally, if $\epsilon>0$ is chosen small enough, we get for a.e.~$t\in[0,\epsilon]$
	\begin{align*}
 f'(t) &= 2zt + \frac12 a_0 r\sin|\theta| - \langle \grad\phi(\gamma(t)) , \gamma'(t) \rangle \\
	&\le 2\phi(0) \epsilon + \frac{D^2}{2} \sin\epsilon - m r k(t) \\
	&\le 2\phi(0) \epsilon + \frac{D^2}{2} \sin\epsilon - m r (a_0-tr) \\
	&\le 2\phi(0) \epsilon + \frac{D^2}{2} \sin\epsilon + m D^2 \epsilon - m a_0r.
	\end{align*}
By~\eqref{e:a-r}, we have $r \geq \kappa / 2$ provided $\epsilon$ is chosen small enough, and we get for a.e.~$t\in [0,\epsilon]$ $$f'(t) \leq 2\phi(0) \epsilon + \frac{D^2}{2} \sin\epsilon + m D^2 \epsilon - \frac{m \kappa^2}{2} \leq 0$$
provided $\epsilon$ is chosen even smaller if necessary. Since $ f(0) = -z_0-\phi(-a,0)\le 0$, it follows that $f(t)\le 0$ for every $t\in[0,\epsilon]$.
\end{proof}

\medskip

To  prove that~\eqref{e:cdt-3} holds for every $t\in [0,\epsilon]$ provided $\epsilon$ is chosen small enough, we set $$g(t) := \phi(-\gamma(t)) + t^2z - \frac12 a_0 rt \sin|\theta| - z_0 .$$ We have $g(0)=\phi(a_0 \nu_0)-z_0\ge0$. We show in Lemma~\ref{lem:g-1} and Lemma~\ref{lem:g-2} below that $g'(t) \geq 0$ for a.e.~$t\in [0,\epsilon]$ provided $\epsilon$ is chosen small enough. It will follow that $g(t) \geq 0$, that is,~\eqref{e:cdt-3} holds, for every $t\in [0,\epsilon]$ as required.

\medskip

We remind  that $\epsilon >0$ has already be chosen small enough so that in particular $\epsilon < \alpha$, and hence $|\theta| < \alpha$,  where $\alpha$ is given by~\eqref{e:phi-2}. We set 
\begin{equation} \label{e:def-t^-}
t^- := \inf \{ t\geq 0: \measuredangle (-\gamma(t),\gamma'(t)) > \alpha \}.
\end{equation}

If $\theta = 0$, $\measuredangle (-\gamma(t),\gamma'(t)) = 0$ if $0\leq t<a_0/r$ and $\measuredangle (-\gamma(t),\gamma'(t)) = \pi$ if $t>a_0/r$, hence $t^-=a_0/r$.

If $\theta \not = 0$, then $\gamma(t) \not = 0$ for every $t\geq 0$ and 
$$\cos \measuredangle (-\gamma(t),\gamma'(t)) = \frac{ \langle -\gamma(t), \gamma'(t) \rangle }{\|\gamma(t)\| \|\gamma'(t)\|} = \frac{a_0\cos\theta - tr }{ k(t) }.$$
For $t=0$, we have $\cos \measuredangle (-\gamma(0),\gamma'(0)) = \cos \theta > \cos \alpha$. When $t\rightarrow +\infty$, $\cos \measuredangle (-\gamma(t),\gamma'(t)) \rightarrow -1$. Hence $t^- \in (0, +\infty)$ with 
$$t^- = \min\left\{t > 0: \frac{a_0\cos\theta - tr }{ k(t) } \leq \cos \alpha\right\} .$$  
Moreover, a routine computation shows that 
\begin{equation} \label{e:t^-}
t^- = \frac{a_0}{r} \left( \cos\theta - \frac{\cos\alpha}{\sin\alpha} \sin|\theta| \right)
\end{equation}
provided $\epsilon$ is chosen small enough. Note that this formula allows to recover the value of $t^-$ when $\theta = 0$.

Furthermore, choosing $\epsilon$ even smaller if necessary and in particular so that $\cos\epsilon - \cos \alpha \sin\epsilon /\sin\alpha \geq 1/2$, we get
\begin{equation} \label{e:t^-lowerbound}
t^- \geq \frac{a_0}{r} \left( \cos\epsilon - \frac{\cos\alpha}{\sin\alpha} \sin \epsilon \right)\geq \frac{a_0}{2r}.
\end{equation}

\begin{lemma}\label{lem:g-1}
We have $g'(t)\ge0$ for a.e.~$t \in [0, \min(\epsilon,t^-/2)]$ provided $\epsilon$ is chosen small enough.
\end{lemma}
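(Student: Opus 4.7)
My approach is to compute $g'(t)$ explicitly, extract a positive main term from the angle-dependent estimate~\eqref{e:cdt-theta-1-bis}, and then absorb the lower-order contributions by taking $\epsilon$ small. Writing $\gamma'(t) = r\nu_\theta$, we have for a.e.~$t$
\[
g'(t) = \langle \grad\phi(-\gamma(t)), -\gamma'(t)\rangle + 2tz - \tfrac12 a_0 r \sin|\theta|.
\]
The first step is to verify the angle condition $\measuredangle(-\gamma(t),\gamma'(t)) \leq \alpha$ on $[0, t^-/2]$. Differentiating $\cos\measuredangle(-\gamma(t),\gamma'(t)) = (a_0\cos\theta - rt)/k(t)$ in $t$ gives a nonpositive derivative on $[0, a_0\cos\theta/r]$, an interval that contains $t^-/2$ in view of~\eqref{e:t^-}; so the angle is monotone in $t$ and bounded by its value $\alpha$ at $t^-$. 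Applying~\eqref{e:cdt-theta-1-bis} with $w = \gamma'(t)$ then gives $g'(t) \geq m k(t) r + 2tz - \tfrac12 a_0 r \sin|\theta|$. A short direct computation further yields $k(t^-/2)^2 \geq a_0^2\cos^2\theta/4$, hence $k(t) \geq a_0/4$ on $[0,t^-/2]$ for $|\theta|$ small, so that the leading term satisfies $m k(t) r \geq m a_0 r/4$.

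The main obstacle is that $a_0$ may be arbitrarily small, in which case the leading term $m a_0 r / 4$ is only quadratic in $a_0$ whereas $|2tz| \leq 2\epsilon\phi(0)$ is only linear in $\epsilon$. To handle this I plan to split into two regimes using the absolute threshold $\rho_0 := \min\{\kappa, \sqrt{\phi(0)/M}\}$. If $a_0 \geq \rho_0$, then Lemma~\ref{lem:v_0-v} yields $r \geq \rho_0/2$ for $\epsilon$ small, so $m k(t) r \geq m\rho_0^2/8$ is an absolute positive constant that dominates $|2tz| \leq 2\epsilon\phi(0)$ and $\tfrac12 a_0 r \sin|\theta| \leq (D^2/2)\epsilon$ once $\epsilon$ is taken small enough.

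If instead $a_0 < \rho_0 \leq \kappa$, then the inclusion $\{v : \|v\| \leq \kappa\} \subset \interior(K)$ together with $z_0 \geq 0$, $p_0 \in \partial B$, and the strict positivity~\eqref{e:positivity-profile} of $\phi$ on $\interior(K)$ force $z_0 = \phi(v_0)$. Integrating the bound $\|\grad\phi\| \leq M\|v\|$ from~\eqref{e:phi-3} along the segment from $0$ to $v_0$ gives $\phi(v_0) \geq \phi(0) - M a_0^2/2 \geq \phi(0)/2$ by the choice of $\rho_0$, so that $z \geq z_0 - \epsilon \geq \phi(0)/4 > 0$ once $\epsilon \leq \phi(0)/4$. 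Then $2tz \geq 0$ and
\[
g'(t) \geq m k(t) r - \tfrac12 a_0 r \sin|\theta| \geq a_0 r \bigl( m/4 - \epsilon/2 \bigr) \geq 0
\]
as soon as $\epsilon \leq m/2$. In either regime, $g'(t) \geq 0$ follows provided $\epsilon$ is taken sufficiently small depending only on the absolute constants $m,\alpha,\kappa,M,D,\phi(0)$.
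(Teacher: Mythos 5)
Your proof is correct, and it handles the delicate step differently from the paper. Both arguments share the same leading estimate: after verifying that $\measuredangle(-\gamma(t),\gamma'(t)) \le \alpha$ on $[0, t^-/2]$, one applies~\eqref{e:cdt-theta-1-bis} to get $-\langle\grad\phi(-\gamma(t)),\gamma'(t)\rangle \ge m\,r\,k(t)$ and then bounds $k(t)$ from below by $a_0/4$ on that interval. (The paper does this more directly via $k(t)\ge a_0\cos\theta - rt$, which avoids the monotonicity detour; your step ``$k(t^-/2)^2 \ge a_0^2\cos^2\theta/4$ hence $k(t)\ge a_0/4$ on $[0,t^-/2]$'' silently uses that $k$ is decreasing on $[0,a_0\cos\theta/r]\supset[0,t^-/2]$ --- true, since $k^2$ is a downward-opening parabola in $t$ restricted to the left of its vertex, but you should say so. Also note that the angle bound on $[0,t^-)$ already follows from the definition of $t^-$ as an infimum; the monotonicity computation is not needed for that.) The genuine divergence is in how the residual term $2tz$ is absorbed. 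The paper splits on the sign of $z$: when $z\ge 0$ the term is harmless, and when $z<0$ it invokes the technical estimate~\eqref{e:r(theta)-r} from Case~5 of Lemma~\ref{lem:v_0-v}'s proof to force $a_0,r\ge\kappa/2$, so that the leading term $ma_0 r/8$ dominates $2\phi(0)\epsilon$. You instead split on whether $a_0$ is above or below the absolute threshold $\rho_0 := \min\{\kappa,\sqrt{\phi(0)/M}\}$. When $a_0\ge\rho_0$, Lemma~\ref{lem:v_0-v} gives $r\ge\rho_0/2$, and the leading term is an absolute constant dominating $|2tz|\le 2\epsilon\phi(0)$. When $a_0<\rho_0\le\kappa$, you observe that $v_0\in\interior(K)$, so $z_0\ge 0$ forces $z_0=\phi(v_0)$; integrating the bound from~\eqref{e:phi-3} then gives $\phi(v_0)\ge\phi(0)-Ma_0^2/2>\phi(0)/2$, hence $z>\phi(0)/4>0$ and the problematic term is nonnegative. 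Your approach avoids re-using the intermediate~\eqref{e:r(theta)-r} and is arguably cleaner, while the paper's is shorter given that~\eqref{e:r(theta)-r} has already been established. Both are correct and rest on the same principle: the ``dangerous'' regime (a negative $2tz$ that could overwhelm a vanishing leading term $ma_0r/4$) cannot in fact occur simultaneously.
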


\begin{proof}
For a.e.~$t\in [0,\min(\epsilon,t^-/2)]$, we have
\begin{equation*}
g'(t) = -\langle \grad\phi(-\gamma(t)) ,\gamma'(t) \rangle + 2tz - \frac12 a_0r\sin|\theta|.
\end{equation*}
By definition of $t^-$, see~\eqref{e:def-t^-}, together with~\eqref{e:cdt-theta-1-bis} (remember that $\gamma(t) = l_\theta(rt)$), we get for a.e.~$t\in [0,\min(\epsilon,t^-/2)]$
$$-\langle \grad\phi(-\gamma(t)) ,\gamma'(t) \rangle \geq m r k(t) \ge ma_0r \left(\cos\theta -  \frac{rt}{a_0} \right)$$
where the last inequality follows from the fact that $(a_0\cos\theta-tr) k(t)^{-1} = \cos \measuredangle (-\gamma(t),\gamma'(t)) \leq 1$. By~\eqref{e:t^-} it follows that 
\begin{equation*}
\begin{split}
-\langle \grad\phi(-\gamma(t)) ,\gamma'(t) \rangle &\geq  ma_0r \left(\cos\theta -  \frac{r}{2a_0}\,t^- \right) \\
&\geq \frac{ma_0r}{2} \left(\cos \theta + \frac{\cos\alpha}{\sin\alpha}  \sin |\theta|\right) \\
&\geq \frac{ma_0r}{2} \cos\epsilon \\
&\geq \frac{ma_0r}{4}
\end{split}
\end{equation*}
provided $\epsilon$ is chosen small enough, and hence,
$$g'(t) \geq a_0r \left(\frac{m}{4} - \frac{\sin \epsilon}{2}\right) + 2tz \geq \frac{ma_0r}{8} + 2tz$$
provided $\epsilon$ is chosen even smaller if necessary. If $z\geq 0$, it follows that $g'(t) \geq 0$. If $z<0$, by~\eqref{e:r(theta)-r} we have $0\leq \hat r(\theta) - r \lesssim \sqrt{\epsilon}$ and $0\leq \hat r(0) - a_0 \lesssim \sqrt{\epsilon}$. Since $\hat r(0)\geq \kappa$ and $\hat r(\theta) \geq \kappa$, we get $a_0 \geq \kappa/2$ and $r\geq \kappa / 2$ provided $\epsilon$ is chosen small enough. Moreover, we have $z \geq -\phi(-v) > -\phi(0)$ by~\eqref{e:max-phi-at-0}. Hence,
$$g'(t) \geq \frac{m\kappa^2}{32} - 2\phi(0) \epsilon \geq 0$$
for a.e.~$t\in [0,\min(\epsilon,t^-/2)]$, provided $\epsilon$ is chosen small enough.
\end{proof}

\begin{lemma} \label{lem:g-2} We have $g'(t)\geq 0$ for a.e.~$t \in [ \min(\epsilon,t^-/2),\epsilon]$ provided $\epsilon$ is chosen small enough.
\end{lemma}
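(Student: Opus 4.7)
The plan is as follows. If $\min(\epsilon, t^-/2) = \epsilon$, the statement is vacuous since $\{\epsilon\}$ has measure zero, so I would assume $t^- < 2\epsilon$ throughout. The core observation is that this regime forces $a_0$ and $r$ to be small (of orders $\epsilon^{3/2}$ and $\sqrt{\epsilon}$ respectively), which in turn forces $z$ to be close to $\phi(0)$; this positive lower bound on $z$ then provides the dominant contribution in $g'(t)$ that beats the (potentially negative) gradient term.

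First, I would extract quantitative bounds on $a_0$ and $r$: from~\eqref{e:t^-lowerbound}, $a_0/(2r)\leq t^- < 2\epsilon$ gives $a_0 < 4r\epsilon$, which together with $|a_0 - r|\lesssim \sqrt\epsilon$ from Lemma~\ref{lem:v_0-v} yields $r\lesssim \sqrt\epsilon$ and $a_0\lesssim \epsilon^{3/2}$ for $\epsilon$ small enough. In particular $\|v_0\|=a_0\leq\kappa$, so $v_0\in\interior(K)$; combined with $(v_0,z_0)\in\partial B$, $z_0\geq 0$, and $\phi(-v_0)>0$ from~\eqref{e:positivity-profile}, this forces $z_0 = \phi(v_0)$. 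By~\eqref{e:phi-3}, $\phi$ is $Ma_0$-Lipschitz on $\{\|w\|\leq a_0\}$, so $\phi(v_0)\geq \phi(0) - Ma_0^2 \geq \phi(0) - C\epsilon^3$, which gives $z \geq z_0 - \epsilon \geq \phi(0)/2$ for $\epsilon$ small enough.

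Next I would estimate $g'(t)$ for $t \in [t^-/2,\epsilon]$. Since $k(t)\leq a_0 + rt \lesssim \epsilon^{3/2}\leq \kappa$, by the choice of $\theta$ one has $|\langle\grad\phi(-\gamma(t)),\gamma'(t)\rangle|\leq Mrk(t) \leq Mra_0 + Mr^2 t$ via~\eqref{e:cdt-theta-2} and Cauchy--Schwarz. Using $z \geq \phi(0)/2$ and $\sin|\theta|\leq\epsilon$, one obtains
\begin{equation*}
g'(t) \geq -Mra_0 - Mr^2 t + t\phi(0) - \frac12 a_0 r\epsilon.
\end{equation*}
The constraint $t \geq t^-/2 \geq a_0/(4r)$, i.e.\ $a_0\leq 4rt$, converts $Mra_0$ and $\frac12 a_0 r\epsilon$ into terms proportional to $r^2 t$, yielding $g'(t)\geq t(\phi(0) - 5Mr^2 - 2r^2\epsilon)$. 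Since $r^2\lesssim\epsilon$, for $\epsilon$ small enough the bracket exceeds $\phi(0)/2$, so $g'(t)\geq t\phi(0)/2\geq 0$.

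The hard part will be the second step: establishing the positive lower bound $z\geq \phi(0)/2$. This is indispensable because once $t$ exceeds the value where $\measuredangle(-\gamma(t),\gamma'(t))$ crosses $\pi - \alpha$, the term $-\langle\grad\phi(-\gamma(t)),\gamma'(t)\rangle$ becomes genuinely negative by~\eqref{e:cdt-theta-1-bis} and cannot be controlled by pure magnitude bounds alone. Recognizing that the constraint $t^- < 2\epsilon$ pins $p_0$ near the ``north pole'' of $\partial B$, so that $z_0\approx\phi(0)$, is what rescues the inequality $g'(t)\geq 0$.
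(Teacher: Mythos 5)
Your proof is correct but takes a genuinely different route from the paper. The paper proceeds by bounding $g'(t)\geq h(t)$ where $h(t):=2\eta t - Mk(t)r - \tfrac12 a_0 r\sin\epsilon$ (using only $z\geq\eta$), then verifies $h(t^-/2)\geq 0$ by an explicit computation involving $\beta_\theta:=\cos\theta-\cos\alpha\sin|\theta|/\sin\alpha$, and finally shows $h$ is increasing on $[0,+\infty)$ by checking that the quadratic equation $h'(t)=0$ has negative discriminant (with a separate treatment of $\theta=0$ where $k$ has a kink). Your approach is more direct: you observe that the constraint $t\geq t^-/2$ is precisely the inequality $a_0\leq 4rt$, which converts the problematic terms $Mra_0$ and $\tfrac12 a_0r\sin|\theta|$ into quantities of order $r^2t$, yielding $g'(t)\geq t\,(\text{const} - O(r^2))\geq 0$ directly, with no auxiliary function and no monotonicity argument. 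This sidesteps the discriminant computation and the case split on $\theta$ entirely. Two remarks on economy: (i) the paper's weaker bound $z\geq\eta$ (obtained more quickly from $z\geq 0$ and $\|v\|\leq\kappa$) already suffices in your final estimate, so deriving $z\geq\phi(0)/2$ via the $Ma_0$-Lipschitz argument is more than is needed; (ii) likewise $a_0\leq 4D\epsilon$ (the paper's bound) is already enough to get $k(t)\leq\kappa$ on $[0,\epsilon]$, so the sharper $a_0\lesssim\epsilon^{3/2}$ is not essential, though both refinements are correct and do no harm.
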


\begin{proof} If $t^- \geq 2\epsilon$, the lemma is trivial. If $t^-  < 2\epsilon$, by~\eqref{e:t^-lowerbound} we have $a_0 \leq 2rt^- \leq 4D\epsilon$. Then, if we choose $\epsilon$ small enough,~\eqref{e:r(theta)-r} together with the fact that $\hat r(0) \geq \kappa$ implies $z\geq 0$. On the other hand,~\eqref{e:a-r} implies $r\lesssim \sqrt{\epsilon} \leq \kappa$. It follows that $z=\phi(r\nu_\theta)\ge\eta$ where $\eta$ is given by~\eqref{e:def-eta}. Furthermore, choosing $\epsilon$ even smaller if necessary, by~\eqref{e:k(t)} we have $k(t) \leq \kappa$ for every $t\in [0,1]$. Remembering~\eqref{e:cdt-theta-2} and the fact that $\gamma(t) = l_\theta (rt)$, it follows that for a.e.~$t\in[0,1]$
\begin{equation} \label{e:g'-lowerbound}
\begin{split}
	g'(t) &= -\langle \grad\phi(-\gamma(t)) ,\gamma'(t) \rangle + 2tz - \frac12 a_0r\sin|\theta| \\
	&\ge -M\|\gamma(t)\|\|\gamma'(t)\| + 2t\eta - \frac12 a_0r \sin\epsilon \\
	&=  2\eta t - M k(t) r - \frac12 a_0r \sin\epsilon .
\end{split}
\end{equation}
Define $$h(t):=2\eta t -M k(t) r - \frac12 a_0 r \sin\epsilon$$ and set $\beta_\theta:=\cos\theta-\cos\alpha\sin|\theta|/\sin\alpha >0$. Recalling~\eqref{e:t^-}, we have 
	\begin{align*}
	h\left(\frac{t^-}{2}\right) &= h\left(\frac{a_0\beta_\theta}{2r}\right)\\
	&= \frac{a_0\beta_\theta \eta}{r} -Mr \sqrt{a_0^2 + a_0^2 \frac{\beta_\theta^2}{4} - a_0^2 \beta_\theta \cos\theta}  - \frac12 a_0r \sin\epsilon\\
	&= a_0r \left(\frac{\beta_\theta \eta}{r^2} - M \sqrt{1+\frac{\beta_\theta^2}{4} - \beta_\theta \cos\theta}- \frac12 \sin\epsilon\right) \\
	& \geq a_0r \left(\frac{\beta_\theta \eta}{r^2} - M \sqrt{1+\frac{\beta_\theta^2}{4}} - \frac12 \sin\epsilon\right).
\end{align*}
Since $|\theta|< \epsilon$, we have $0<\beta_\epsilon < \beta_\theta \leq 1$, and, recalling that $r\leq C\sqrt{\epsilon}$ for some absolute constant $C>0$, we get
\begin{equation*}
	h\left(\frac{t^-}{2}\right)\geq a_0r \left(\frac{\beta_\epsilon \eta}{C^2 \epsilon}  - \sqrt{\frac{5}{4}} M - \frac12 \sin\epsilon\right) .
	\end{equation*}
It follows that 
\begin{equation} \label{e:h(t^-/2)} 
h\left(\frac{t^-}{2}\right)\geq 0
\end{equation}
provided $\epsilon$ is chosen small enough.

If $\theta \not= 0$, then $k(t) \not= 0$ for every $t\geq 0$ and $h\in C^1([0,+\infty))$ with
\begin{equation*}
h'(t) = 2\eta -M r^2 (rt-a_0\cos\theta) k(t)^{-1}.
\end{equation*}
We have $h'(t) \not=0$ for every $t\geq 0$. Indeed, arguing by contradiction, assume that $h'(t)=0$ for some $t\geq 0$. Then $t$ is a real solution of the quadratic equation
\[
	t^2 r^2 (4\eta^2-M^2r^4) + 2 t a_0r\cos\theta ( M^2r^4-4\eta^2) + a_0^2 (4\eta^2-M^2r^4\cos^2\theta) = 0 .
\]
Note that $4\eta^2-M^2r^4 \geq 4\eta^2 -M^2 C^4 \epsilon^2 >0$ provided we choose $\epsilon$ small enough.
Thus, the discriminant of this equation is given by
\[
	\Delta = - 16\eta^2a_0^2r^2 (4\eta^2-M^2r^4)  (1-\cos^2\theta) <0 .
\]
Hence the above quadratic equation does not have real solutions. This gives a contradiction and  proves that $h'(t) \not=0$ for every $t\geq 0$. Since $h'(0) = 2\eta + Mr^2\cos\theta \geq 2\eta >0$, it follows that $h'(t) > 0$ for every $t\geq 0$. Together with~\eqref{e:h(t^-/2)}, this implies that $h(t) \geq 0$ for every $t \geq t^- / 2$. Then~\eqref{e:g'-lowerbound} implies $g'(t) \geq h(t) \geq 0$ for a.e.~$t\in [t^-/2,\epsilon]$ and this concludes the proof of the lemma when $\theta\not=0$.

If $\theta = 0$ , then $k(t) = |rt-a_0|$ and we have for $t\not= a_0/r$,
\[
	h'(t) = 2\eta - Mr^2 {\rm sgn}(rt-a_0) \ge 2\eta - Mr^2 \geq 2\eta -M C^2 \epsilon >0
	\]
	if $\epsilon$ is chosen small enough, and we conclude the proof as in the case where $\theta\not=0$.
\end{proof}

\section{Necessary conditions} \label{sect:necessary-conditions}

This section is devoted to the proof of necessary conditions for the validity of BCP on $\HH$ equipped with a homogeneous distance. In Section~\ref{subsect:preliminaries-nec-cdt}, we recall a necessary condition proved in~\cite{LeDonne_Rigot_Heisenberg_BCP} and give a consequence of this condition on which our arguments will be mainly based, see Proposition~\ref{prop:necessary-condition-bcp}. In Section~\ref{subsect:nec-cdt-general-case}, we prove Theorem~\ref{thm:intro-necessary-cdt}, see Theorem~\ref{thm:necessary-cdt-int(K)}, Theorem~\ref{thm:radially-decreasing-profile}, and Theorem~\ref{thm:diff-at-0} for more detailed statements. In Section~\ref{subsect:additional-nec-cdt}, we prove several improvements of the necessary conditions given in Section~\ref{subsect:nec-cdt-general-case} under additional assumptions about the regularity of the profile of the homogeneous distance. Theorem~\ref{thm:intro-more-nec-cdt} will come as a corollary of these improvements.
 
\subsection{Preliminaries} \label{subsect:preliminaries-nec-cdt}

For technical convenience, we identify when needed in this section $\HH$ with $\R^3$ in the obvious way. Namely, we identify $p=(v,z)\in\HH$, where $v=(x,y)\in\R^2$ and $z\in\R$, with $(x,y,z) \in \R^3$. 

\medskip

For $q\in \HH$, $u \in\R^3\setminus\{0\}$ and $\alpha \in (0,\pi/2)$, we denote by $\mathcal{C}(q,u,\alpha)$ the open Euclidean half-cone with vertex $q$, axis $q+\R_{>0}\, u$ and opening $2\alpha$, namely, 
$$\mathcal{C}(q,u,\alpha):=\{p\in \HH\setminus\{q\}:\, \measuredangle(u,p-q) < \alpha\}~.$$

\smallskip

For $q\in \HH$, we denote by $\tau_q$ the left-translation $\tau_q(p) := q\cdot p$. When considering $(\tau_{q})_*(v)$ for $v\in \R^2$, we identify $v=(v_1,v_2)$ with $(v_1,v_2,0)\in\R^3$, namely, $(\tau_{q})_*(v) := (v, \omega(\pi(q),v)/2)\in\R^3$ (see Section~\ref{sect:preliminaries} for the definition of $\pi$). 

\medskip

Given a homogeneous distance $d$ on $\HH$ with unit ball centered at the origin~$B$, we say that $u\in \R^3\setminus\{0\}$ points out of $B$ at $q\in\partial B$ if there exist an open neighbourhood $U$ of $q$ and $\alpha \in (0,\pi/2)$ such that 
$$B \cap \mathcal{C}(q,u,\alpha) \cap U = \emptyset~.$$
For $v\in\R^2\setminus\{0\}$ and $\epsilon>0$, we denote by $\Omega_\epsilon( v)$ the set of points $q\in\de B$ such that $\pi(q)\not= 0$, $\measuredangle(v,\pi(q))< \epsilon$ and $(\tau_{q})_*(-v)$ points out of $B$ at $q$.

\medskip

The following sufficient condition for the non-validity of BCP can be found in~\cite{LeDonne_Rigot_Heisenberg_BCP}.

\begin{theorem} \cite[Theorem 6.1]{LeDonne_Rigot_Heisenberg_BCP} \label{thm:criterum-for-bcp}
Let $d$ be a homogeneous distance on $\HH$.	Assume that one can find $v\in\R^2\setminus\{0\}$ such that $\Omega_\epsilon(v)\neq\emptyset$ for every $\epsilon>0$. Then BCP does not hold in $(\HH,d)$.
\end{theorem}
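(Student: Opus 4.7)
The plan is to falsify BCP by constructing, for every $N\in\N$, a Besicovitch family of $N$ balls sharing $0$ as their common intersection point. If such balls are written $B(p_n,r_n)$ with $r_n=d(0,p_n)$ and $p_n=-\delta_{r_n}(\tilde q_n)$ for some $\tilde q_n\in\partial B$, then using that $\delta_t$ is a group automorphism and $p^{-1}=-p$ in $\HH$, the Besicovitch condition $d(p_j,p_i)>\max(r_i,r_j)$ reduces, in the case $r_j>r_i$, to
\begin{equation*}
\tilde q_j\cdot\delta_{s}(-\tilde q_i)\notin B,\qquad s:=r_i/r_j,
\end{equation*}
and I will choose each $\tilde q_n$ in a set $\Omega_{\epsilon_n}(v)$ provided by the hypothesis, for suitably small $\epsilon_n$.

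A direct expansion of the group law gives
\begin{equation*}
\tilde q_j\cdot\delta_s(-\tilde q_i)\;=\;\tilde q_j+s\,(\tau_{\tilde q_j})_{*}(-\pi(\tilde q_i))+O(s^2),
\end{equation*}
so as $s\downarrow 0$ the Euclidean displacement from $\tilde q_j$ is in the direction of the vector $(\tau_{\tilde q_j})_{*}(-\pi(\tilde q_i))\in\R^3$. Because $\pi(\tilde q_i)$ lies within Euclidean angle $\epsilon_i$ from $v$ and the map $w\mapsto(\tau_{\tilde q_j})_{*}(w)$ is linear, this direction is arbitrarily close to that of $(\tau_{\tilde q_j})_{*}(-v)$ for $\epsilon_i$ small, and by definition of $\Omega_{\epsilon_j}(v)$ the latter direction points out of $B$ at $\tilde q_j$: there exist an opening $\alpha_j>0$ and a neighborhood $U_j\ni\tilde q_j$ such that $\mathcal{C}(\tilde q_j,(\tau_{\tilde q_j})_{*}(-v),\alpha_j)\cap U_j$ is disjoint from $B$. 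Therefore, provided $\epsilon_i$ is small (to control the angular closeness) and $s$ is small (to keep the endpoint in $U_j$ and to dominate the $O(s^2)$ term), $\tilde q_j\cdot\delta_s(-\tilde q_i)$ lies in this cone-neighborhood and hence outside $B$.

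The family is then built by induction on $n$. Having selected $\tilde q_1,\dots,\tilde q_n$ with their respective parameters $(\alpha_j,U_j)$ and radii $r_1>\cdots>r_n$, I choose $\epsilon_{n+1}$ small enough that the angular closeness clause above is simultaneously satisfied at every previous $\tilde q_j$; the hypothesis $\Omega_{\epsilon_{n+1}}(v)\neq\emptyset$ then supplies a point $\tilde q_{n+1}\in\Omega_{\epsilon_{n+1}}(v)$, and finally I pick $r_{n+1}$ small enough that for every $j\le n$ one has $\tilde q_j\cdot\delta_{r_{n+1}/r_j}(-\tilde q_{n+1})\in\mathcal{C}(\tilde q_j,(\tau_{\tilde q_j})_{*}(-v),\alpha_j)\cap U_j$. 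This produces Besicovitch families of arbitrary cardinality and contradicts BCP.

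The main subtlety is the sign in the parametrization $p_n=-\delta_{r_n}(\tilde q_n)$: only with this choice does the computation of $p_j^{-1}p_i$ base the displacement \emph{at $\tilde q_j$ itself}, where the pointing-out hypothesis lives. The naive choice $p_n=\delta_{r_n}(\tilde q_n)$ would instead base the displacement at $-\tilde q_j$ in the direction $(\tau_{-\tilde q_j})_{*}(\pi(\tilde q_i))$, and because the group law is nonlinear one has $(\tau_{-\tilde q_j})_{*}(v)=(v,-\tfrac12\omega(\pi(\tilde q_j),v))\neq(v,\tfrac12\omega(\pi(\tilde q_j),v))=-(\tau_{\tilde q_j})_{*}(-v)$, so the Euclidean symmetry $B=-B$ does not immediately yield an applicable pointing-out condition. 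A secondary book-keeping point, which must be handled carefully but is routine, is the two-layer smallness choice at each step: $\epsilon_{n+1}$ must first be driven down far enough to secure the angular clause at \emph{all} previously chosen centers, and only then is $r_{n+1}$ selected to secure the neighborhood clause at each of them.
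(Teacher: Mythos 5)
The paper does not prove this theorem; it is cited verbatim from \cite[Theorem 6.1]{LeDonne_Rigot_Heisenberg_BCP} with no proof reproduced here, so there is no in-paper argument to compare against. Your reconstruction is correct: the reduction of the Besicovitch separation to $\tilde q_j\cdot\delta_s(-\tilde q_i)\notin B$ (via $p_j^{-1}\cdot p_i=\delta_{r_j}\bigl(\tilde q_j\cdot\delta_s(-\tilde q_i)\bigr)$ and left-invariance), the first-order expansion showing the displacement from $\tilde q_j$ has leading term $s\,(\tau_{\tilde q_j})_*(-\pi(\tilde q_i))$, the use of linearity of $(\tau_{\tilde q_j})_*$ together with $\pi(\tilde q_i)\neq 0$ to control the angle to $(\tau_{\tilde q_j})_*(-v)$, and the two-stage inductive choice (first $\epsilon_{n+1}$ uniformly over $j\le n$, then $r_{n+1}$) are all sound; the sign $p_n=-\delta_{r_n}(\tilde q_n)$ that you single out is indeed the precise choice that places the displacement at $\tilde q_j$ itself, where the pointing-out hypothesis from $\Omega_{\epsilon_j}(v)$ applies.
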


Note that $\Omega_{\epsilon'}(v) \subset \Omega_\epsilon(v)$ whenever $0< \epsilon' \leq  \epsilon$. Hence $\Omega_\epsilon(v) = \emptyset$ implies $\Omega_{\epsilon'}(v) = \emptyset$ for every $0< \epsilon' \leq  \epsilon$. Therefore, Theorem~\ref{thm:criterum-for-bcp} can be rephrased as a necessary condition for the validity of BCP in the following way.

\begin{corollary}  \label{cor:necessary-condition-bcp}
Let $d$ be a homogeneous distance on $\HH$ and assume that BCP holds in $(\HH,d)$. Then, for all $v\in\R^2\setminus\{0\}$, there exists $\epsilon_v >0$ such that $\Omega_{\epsilon}(v) = \emptyset$ for every $0<\epsilon \leq \epsilon_v$.
\end{corollary}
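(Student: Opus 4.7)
The plan is to deduce Corollary~\ref{cor:necessary-condition-bcp} directly by combining the contrapositive of Theorem~\ref{thm:criterum-for-bcp} with the monotonicity inclusion $\Omega_{\epsilon'}(v) \subset \Omega_\epsilon(v)$ for $0 < \epsilon' \leq \epsilon$ that is pointed out in the paragraph immediately preceding the statement.

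Concretely, fix $v \in \R^2 \setminus \{0\}$ and suppose BCP holds in $(\HH,d)$. Applying Theorem~\ref{thm:criterum-for-bcp} in contrapositive form to this particular $v$, the condition ``$\Omega_\epsilon(v) \neq \emptyset$ for every $\epsilon > 0$'' must fail, so there exists some $\epsilon_v > 0$ with $\Omega_{\epsilon_v}(v) = \emptyset$. The monotonicity inclusion then automatically propagates this vanishing to every smaller scale, yielding $\Omega_\epsilon(v) = \emptyset$ for all $0 < \epsilon \leq \epsilon_v$, which is the claimed conclusion.

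The monotonicity itself is immediate from unwinding the definition of $\Omega_\epsilon(v)$: if $0 < \epsilon' \leq \epsilon$ and $q \in \Omega_{\epsilon'}(v)$, then $q \in \partial B$ satisfies $\pi(q) \neq 0$, the vector $(\tau_{q})_*(-v)$ points out of $B$ at $q$, and $\measuredangle(v,\pi(q)) < \epsilon' \leq \epsilon$, so $q \in \Omega_\epsilon(v)$ as well.

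There is no essential obstacle here: the content of Corollary~\ref{cor:necessary-condition-bcp} is a pointwise-in-$v$ quantifier rearrangement of Theorem~\ref{thm:criterum-for-bcp}, with the monotonicity serving only to convert the mere existence of one good threshold into the ``downward stable'' form $\Omega_\epsilon(v) = \emptyset$ for every $0 < \epsilon \leq \epsilon_v$. Note that the threshold $\epsilon_v$ genuinely depends on $v$ and no uniformity over $v \in \R^2 \setminus \{0\}$ is asserted, which is consistent with how this corollary will be applied in the sequel.
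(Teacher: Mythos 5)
Your proposal is correct and takes exactly the route the paper itself takes: the monotonicity $\Omega_{\epsilon'}(v)\subset\Omega_\epsilon(v)$ for $0<\epsilon'\leq\epsilon$ is noted in the paragraph immediately preceding the corollary, and the corollary is then obtained as the contrapositive of Theorem~\ref{thm:criterum-for-bcp}.
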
 

The major part of our arguments in Sections~\ref{subsect:nec-cdt-general-case} and~\ref{subsect:additional-nec-cdt} will be based on the next proposition which is a consequence of Corollary~\ref{cor:necessary-condition-bcp}.

\begin{proposition} \label{prop:necessary-condition-bcp}
Let $d$ be a homogeneous distance on $\HH$ and assume that BCP holds in $(\HH,d)$. Then, for all $v\in\R^2\setminus\{0\}$, there exists $\epsilon_v \in (0,\pi/4)$ such that the following holds. Let $q\in \partial B$ be such that $\pi(q)\not= 0$ and $\measuredangle(v,\pi(q))< \epsilon_v$. Assume that there is an open neighbourhood $U$ of $q$ and a continuous map $F:U\rightarrow \R$ which is differentiable at $q$ and such that $F(q)=0$ and $B\cap U = \{p\in U:\, F(p) \leq 0\}$. Then $\langle \grad F(q),(\tau_q)_*(v) \rangle \geq 0$.
\end{proposition}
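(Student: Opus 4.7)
\medskip

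The plan is to argue by contradiction using Corollary~\ref{cor:necessary-condition-bcp}. Fix $v \in \R^2 \setminus \{0\}$ and let $\epsilon_v \in (0,\pi/4)$ be given by Corollary~\ref{cor:necessary-condition-bcp} (shrinking if necessary so that $\epsilon_v < \pi/4$), so that $\Omega_\epsilon(v) = \emptyset$ for every $0 < \epsilon \leq \epsilon_v$. Suppose for contradiction that $q \in \partial B$ satisfies $\pi(q) \neq 0$ and $\measuredangle(v,\pi(q)) < \epsilon_v$, that $F$ is as in the statement on some neighbourhood $U$ of $q$, but that $\langle \grad F(q), (\tau_q)_*(v) \rangle < 0$. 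The goal is to show that $q \in \Omega_{\epsilon_v}(v)$, which will contradict the choice of $\epsilon_v$.

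Set $u := (\tau_q)_*(-v)$. Since $(\tau_q)_*$ is linear, we have $u = -(\tau_q)_*(v)$, and from the assumed inequality we get $c := \langle \grad F(q), u \rangle > 0$. The heart of the argument is to deduce from this that $u$ points out of $B$ at $q$ in the sense of Section~\ref{subsect:preliminaries-nec-cdt}. For any unit vector $u'$ with $\measuredangle(u, u') < \alpha$, write $u' = (u/\|u\|) \cos \theta + w \sin \theta$ with $w$ a unit vector orthogonal to $u$ and $\theta = \measuredangle(u,u')$, so that
\[
\langle \grad F(q), u' \rangle \geq \frac{c}{\|u\|} \cos \alpha - \|\grad F(q)\| \sin \alpha .
\]
Choose $\alpha \in (0,\pi/2)$ small enough that this lower bound exceeds $c/(2\|u\|)$. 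Then, writing $p - q = r u'$ with $r = \|p - q\|$, we obtain $\langle \grad F(q), p - q \rangle \geq c r / (2\|u\|)$ for every $p \in \mathcal{C}(q, u, \alpha)$. The differentiability of $F$ at $q$ together with $F(q) = 0$ then gives $F(p) \geq c r / (2\|u\|) - o(r)$, so that $F(p) > 0$ for $p \in \mathcal{C}(q, u, \alpha) \cap V$ for a sufficiently small open neighbourhood $V \subset U$ of $q$. Since $B \cap U = \{F \leq 0\}$, this shows $B \cap \mathcal{C}(q,u,\alpha) \cap V = \emptyset$, i.e., $u = (\tau_q)_*(-v)$ points out of $B$ at $q$.

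Combined with the hypotheses $\pi(q) \neq 0$ and $\measuredangle(v, \pi(q)) < \epsilon_v$, this proves $q \in \Omega_{\epsilon_v}(v)$, contradicting $\Omega_{\epsilon_v}(v) = \emptyset$. Hence $\langle \grad F(q), (\tau_q)_*(v) \rangle \geq 0$, as desired.

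I do not expect a serious obstacle here: the proposition is essentially the differentiable reformulation of Corollary~\ref{cor:necessary-condition-bcp}. The only step requiring a little care is the cone argument showing that a strictly positive directional derivative of $F$ forces the corresponding Euclidean half-cone to lie in $\{F > 0\}$ near $q$, but this is a routine consequence of differentiability and the positivity of $c$.
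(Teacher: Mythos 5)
Your proof is correct and is essentially the contrapositive of the paper's argument: you assume $\langle\grad F(q),(\tau_q)_*(v)\rangle<0$ and use a cone estimate from differentiability to show that $(\tau_q)_*(-v)$ points out of $B$ at $q$, so $q\in\Omega_{\epsilon_v}(v)$, contradicting Corollary~\ref{cor:necessary-condition-bcp}; the paper instead argues directly that $\Omega_{\epsilon_v}(v)=\emptyset$ forces $(\tau_q)_*(-v)$ not to point out, extracts a sequence $q_k\in B\cap\mathcal{C}(q,(\tau_q)_*(-v),1/k)\cap U$, and passes to the limit via differentiability of $F$ at $q$. Both rely on precisely the same ingredients (Corollary~\ref{cor:necessary-condition-bcp}, the half-cone definition of ``pointing out of $B$'', and differentiability of $F$ at $q$), so the only difference is the logical direction.
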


\begin{proof} Let $v\in\R^2\setminus\{0\}$ and $\epsilon_v \in (0,\pi/4)$ be given by Corollary~\ref{cor:necessary-condition-bcp} so that $\Omega_{\epsilon_v}(v) = \emptyset$. Let $q\in \partial B$ be as in the statement of the proposition, and in particular, such that $\pi(q)\not= 0$ and $\measuredangle(v,\pi(q))< \epsilon_v$. Since $\Omega_{\epsilon_v}(v) = \emptyset$, $(\tau_{q})_*(-v)$ does not point out of $B$ at $q$. Hence, for every $k\in \N^*$, one can find $q_k \in B \cap \mathcal{C}(q,(\tau_{q})_*(-v),1/k) \cap U$. It follows that $F(q_k) - F(q) = F(q_k) \leq 0$. Since $q_k \in \mathcal{C}(q,(\tau_{q})_*(-v),1/k)$, we have $$\frac{q_k-q}{\|q_k-q\|} \underset{k\rightarrow +\infty}{\longrightarrow} \frac{(\tau_{q})_*(-v)}{\|(\tau_{q})_*(-v)\|}.$$ Since $F$ is differentiable at $q$, it follows that $$\frac{F(q_k) - F(q)}{\|q_k-q\|} \underset{k\rightarrow +\infty}{\longrightarrow}  \langle \nabla F(q), \frac{(\tau_{q})_*(-v)}{\|(\tau_{q})_*(-v)\|} \rangle$$
and hence $\langle \nabla F(q), (\tau_{q})_*(v) \geq 0$.\end{proof}

\subsection{Necessary conditions} \label{subsect:nec-cdt-general-case}
In this section, we let $d$ denote a homogeneous distance on $\HH$ with unit ball centered at the origin $B$ and profile $\phi:K \rightarrow [0,+\infty)$ (see Proposition~\ref{prop:LDNG}). We also set 
\begin{equation} \label{e:def-F}
F_+(w,z) := z - \phi(w)\, \text{ and }\, F_-(w,z) := -z -\phi(-w)
\end{equation}
for $w\in K$ and $z\in \R$. In other words, we have
\begin{equation*}
\begin{split}
B = \{(& w,z)\in\HH:\, w\in K,\, F_+(w,z)\leq 0 \} \\ 
&\cap \, \{(w,z)\in\HH:\, w\in K,\, F_-(w,z)\leq 0 \}.
\end{split}
\end{equation*}

We begin with a simple consequence of Proposition~\ref{prop:necessary-condition-bcp} for later use.

\begin{lemma} \label{lem:first-nec-cdt}
	Assume that BCP holds in $(\HH,d)$.
	Then, for all $v\in\bb S^1$, there exists $\epsilon_v \in (0,\pi/4)$ such that the following holds.
	If $w\in \interior(K)\setminus\{0\}$ is such that $\measuredangle(v,w)<\epsilon_v$ and $\phi$ is differentiable at $w$, then 
		\begin{equation}
		\label{e:nec-cdt-1}
			\langle \grad\phi(w), v \rangle +\frac12\omega(v,w) \le 0  
			\end{equation}
and
\begin{equation}
		\label{e:nec-cdt-2}
			 \langle \grad\phi(w), v \rangle -\frac12\omega(v,w) \le 0 .
		\end{equation}
\end{lemma}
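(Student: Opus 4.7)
The plan is to apply Proposition~\ref{prop:necessary-condition-bcp} twice: once at the point $q_+:=(w,\phi(w))\in\partial B$ to obtain~\eqref{e:nec-cdt-1}, and once at the antipodal point $-q_+=(-w,-\phi(w))\in\partial B$ to obtain~\eqref{e:nec-cdt-2}, exploiting the symmetry $B=-B$ together with the direction $-v$ in the second application. Accordingly I take $\epsilon_v:=\min(\tilde\epsilon_v,\tilde\epsilon_{-v})$, where $\tilde\epsilon_v$ and $\tilde\epsilon_{-v}$ denote the constants supplied by Proposition~\ref{prop:necessary-condition-bcp} for the directions $v$ and $-v$ respectively, so that $\epsilon_v\in(0,\pi/4)$.

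For~\eqref{e:nec-cdt-1}, the main verification is that $F_+$ from~\eqref{e:def-F} locally describes $B$ near $q_+$. Because $w\in\interior(K)$ and $K=-K$, we also have $-w\in\interior(K)$, so~\eqref{e:positivity-profile} gives $\phi(-w)>0$, and hence $-\phi(-w)<0<\phi(w)$; continuity of $\phi$ on $\interior(K)$ then furnishes an open neighbourhood $U$ of $q_+$ on which $B\cap U=\{p\in U:F_+(p)\le 0\}$. The function $F_+$ is continuous on $U$, vanishes at $q_+$, and is differentiable at $q_+$ by the hypothesised differentiability of $\phi$ at $w$. Since $\measuredangle(v,\pi(q_+))=\measuredangle(v,w)<\epsilon_v\le\tilde\epsilon_v$, Proposition~\ref{prop:necessary-condition-bcp} yields $\langle\nabla F_+(q_+),(\tau_{q_+})_*(v)\rangle\ge 0$. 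Using $\nabla F_+(q_+)=(-\nabla\phi(w),1)$, $(\tau_{q_+})_*(v)=(v,\omega(w,v)/2)$, and $\omega(w,v)=-\omega(v,w)$ gives exactly~\eqref{e:nec-cdt-1}.

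For~\eqref{e:nec-cdt-2}, I would work at $-q_+\in\partial B$ (which lies in $\partial B$ since $B=-B$). A symmetric argument shows that $F_-$ from~\eqref{e:def-F} locally describes $B$ on some neighbourhood $U'$ of $-q_+$. The crucial point is that $F_-(w',z')=-z'-\phi(-w')$ depends on $\phi$ evaluated at $-w'$; at $w'=-w$ this is $\phi(w)$, so differentiability of $F_-$ at $-q_+$ follows from the given differentiability of $\phi$ at $w$. With $\measuredangle(-v,\pi(-q_+))=\measuredangle(-v,-w)=\measuredangle(v,w)<\epsilon_v\le\tilde\epsilon_{-v}$, Proposition~\ref{prop:necessary-condition-bcp} applied to the direction $-v$ gives $\langle\nabla F_-(-q_+),(\tau_{-q_+})_*(-v)\rangle\ge 0$. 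Computing $\nabla F_-(-q_+)=(\nabla\phi(w),-1)$ and $(\tau_{-q_+})_*(-v)=(-v,\omega(w,v)/2)$, this inequality rearranges to~\eqref{e:nec-cdt-2}.

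The only delicate point is the choice of witness for~\eqref{e:nec-cdt-2}. The naive attempt would apply Proposition~\ref{prop:necessary-condition-bcp} at the lower point $(w,-\phi(-w))$, but its local defining function is $(w',z')\mapsto -z'-\phi(-w')$, whose differentiability at that point requires differentiability of $\phi$ at $-w$, which is not among the hypotheses. Replacing this point by its antipode $-q_+$ and simultaneously replacing $v$ by $-v$ is exactly the manoeuvre that recycles the single differentiability assumption at $w$ to yield both inequalities.
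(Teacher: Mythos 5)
Your proof is correct and uses exactly the same strategy as the paper: apply Proposition~\ref{prop:necessary-condition-bcp} at $q_+=(w,\phi(w))$ with direction $v$ using the defining function $F_+$, and at the antipode $q_-=q_+^{-1}=(-w,-\phi(w))$ with direction $-v$ using $F_-$, so that the single differentiability hypothesis at $w$ is recycled for both inequalities. If anything, you supply slightly more detail than the paper — explicitly checking that $F_\pm$ locally describe $B$ and carefully taking $\epsilon_v:=\min(\tilde\epsilon_v,\tilde\epsilon_{-v})$, a point the paper glosses over.
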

 
\begin{proof} Given $v\in \bb S^1$, let $\epsilon_v \in (0,\pi/4)$ be given by Proposition~\ref{prop:necessary-condition-bcp}. Let $w\in \interior(K)\setminus\{0\}$ be such that $\measuredangle(v,w)<\epsilon_v$ and assume that $\phi$ is differentiable at $w$. Set $q_+:=(w,\phi(w)) \in \partial B$. By Proposition~\ref{prop:necessary-condition-bcp}, we have
 $$ \langle \grad F_+(q_+), (\tau_{q_+})_*(v)\rangle = -\langle \grad\phi(w) , v \rangle +\frac12\omega(w,v) \geq 0$$ 
	and~\eqref{e:nec-cdt-1} follows. Similarly, set $q_-:=q_+^{-1}=(-w,-\phi(w))\in \partial B$. By Proposition~\ref{prop:necessary-condition-bcp}, we have $$ \langle \grad F_-(q_-), (\tau_{q_-})_*(-v)\rangle = - \langle \grad\phi(w) , v \rangle -\frac12\omega(w,v) \geq 0$$ 
and~\eqref{e:nec-cdt-2} follows.	
\end{proof}

The first part of Theorem~\ref{thm:intro-necessary-cdt} can deduced from the next proposition, see also Theorem~\ref{thm:necessary-cdt-int(K)}.

\begin{proposition} \label{prop:necessary-cdt-int(K)-bis}
	Assume that BCP holds in $(\bb H,d)$. 
	Then there is a countable closed set $W\subset\bb S^1$ such that, 
	for all $u\in\bb S^1\setminus W$, there are $m_{u}>0$, $\alpha_{u}\in(0,\pi/2)$, and an open subset $O_{u}\subset\bb S^1$ containing $u$ such that
	\begin{equation} 
		\langle \grad\phi(w) , w' \rangle  \le -  m_{u} \|w \|\|w'\| 
	\end{equation} 
	for all $w\in \interior(K) \setminus \{0\}$ such that $\phi$ is differentiable at $w$ and $ w/\|w\| \in O_{u}$, and for all $w'\in\R^2\setminus\{0\}$ such that $\measuredangle(w,w') < \alpha_{u}$.
\end{proposition}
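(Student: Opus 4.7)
The plan is to combine a Baire-category argument, which produces the exceptional set $W$, with a direct estimate on $\grad\phi(w)$ obtained by applying Lemma~\ref{lem:first-nec-cdt} at two unit vectors placed symmetrically around $u$.

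For the Baire step, for each $n \in \N^*$ I would set $E_n := \{v \in \bb S^1 : \Omega_{1/n}(v) = \emptyset\}$, so that $\bb S^1 = \bigcup_n E_n$ by Corollary~\ref{cor:necessary-condition-bcp}. Each $E_n$ is closed in $\bb S^1$: if $v_k \to v$ with $v_k \in E_n$ but some $q \in \Omega_{1/n}(v)$ existed, then $\pi(q) \ne 0$ and $\measuredangle(v_k, \pi(q)) < 1/n$ would still hold for $k$ large, and $(\tau_q)_*(-v_k)$ would still point out of $B$ at $q$ (since the set of directions pointing out of $B$ at $q$ is open in $\R^3 \setminus \{0\}$ and $v \mapsto (\tau_q)_*(-v)$ is continuous), contradicting $v_k \in E_n$. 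Baire's theorem then makes $A := \bigcup_n \interior_{\bb S^1}(E_n)$ open and dense in $\bb S^1$, so $W := \bb S^1 \setminus A$ is closed with empty interior.

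For the gradient bound, I would fix $u \in \bb S^1 \setminus W$ and choose $n_u$ with $u \in \interior(E_{n_u})$, then pick $\beta > 0$ small enough that $v^\pm := R_{\pm \beta} u$ lie in $\interior(E_{n_u})$ and $5\beta/4 < 1/n_u$, where $R_\theta$ is the planar rotation by $\theta$, and finally set $O_u := \{v \in \bb S^1 : \measuredangle(v, u) < \beta/4\}$, $\alpha_u := \beta/4$. For $w \in \interior(K) \setminus \{0\}$ with $w/\|w\| \in O_u$ and $\phi$ differentiable at $w$, denoting $\gamma := \angle(u, w/\|w\|) \in (-\beta/4, \beta/4)$, one checks $\measuredangle(v^\pm, w) = |\pm\beta - \gamma| \leq 5\beta/4 < \epsilon_{v^\pm}$, so Lemma~\ref{lem:first-nec-cdt} applies and, after combining its two inequalities as $\langle \grad\phi(w), v\rangle \leq -\tfrac12|\omega(v,w)|$ and removing absolute values (which is legitimate since $|\gamma| < \beta$), yields $\langle \grad\phi(w), v^\pm \rangle \leq -\frac{1}{2}\|w\|\sin(\beta \mp \gamma)$. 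Given now $w' \in \R^2 \setminus \{0\}$ with $\measuredangle(w, w') < \alpha_u$, setting $\psi := \angle(u, w'/\|w'\|)$ gives $|\psi| < \beta/2 < \beta$, so $w' = \lambda_+ v^+ + \lambda_- v^-$ with $\lambda_\pm = \|w'\|(\cos\psi/\cos\beta \pm \sin\psi/\sin\beta)/2 \geq 0$; forming this nonnegative combination of the two previous inequalities and using $\sin(\beta-\gamma) + \sin(\beta+\gamma) = 2\sin\beta\cos\gamma$, $\sin(\beta-\gamma) - \sin(\beta+\gamma) = -2\cos\beta\sin\gamma$, one obtains
\begin{equation*}
\langle \grad\phi(w), w' \rangle \leq -\frac{1}{2}\|w\| \|w'\| \bigl[\cos\psi \tan\beta \cos\gamma - \sin\psi \cot\beta \sin\gamma\bigr].
\end{equation*}
With $|\gamma|, |\psi| \leq \beta/2$, the bracket is bounded below by $\cos^2(\beta/2)\tan\beta - \sin^2(\beta/2)\cot\beta = (\cos\beta - \cos 2\beta)/\sin 2\beta$, which behaves like $3\beta/4$ as $\beta \to 0^+$ and is thus at least $\beta/2$ for $\beta$ chosen sufficiently small at the outset, establishing the inequality of the proposition with $m_u := \beta/4$.

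The main obstacle I foresee is upgrading the conclusion ``$W$ closed with empty interior'', which the Baire step alone provides, to ``$W$ countable closed'' as required by the statement; this should need an extra ingredient beyond plain Baire, perhaps a Cantor--Bendixson-type iteration inside $W$ combined with a sharper analysis of the ways in which the local uniformity of $\epsilon_v$ can fail, possibly exploiting the convexity of $K$ and the fact that the boundary of a planar convex body has at most countably many corners.
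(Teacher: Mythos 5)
The gradient-estimate half of your argument is correct and close in spirit to the paper's: apply Lemma~\ref{lem:first-nec-cdt} at two unit vectors flanking $u$, write $w'$ as a non-negative combination of them, and bound the resulting sine from below. Your symmetric choice $v^\pm = R_{\pm\beta}u$ makes the algebra cleaner; the paper instead takes two not-necessarily-symmetric vectors $v, v'$ with $u \in \Gamma_v^+ \cap \Gamma_{v'}^-$ (notation below), uses the combined inequality $\langle \grad\phi(w), w'\rangle \le -\tfrac12|\omega(a'v-b'v',w)|$ for $w' = a'v+b'v'$ with $a',b'>0$, and bounds $\|a'v-b'v'\|\,|\sin\angle(a'v-b'v',w)|$ away from zero. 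Same mechanism, different bookkeeping. Note also that the paper does \emph{not} need the local uniformity of $\epsilon_v$ that you extract from the Baire step: it just evaluates Lemma~\ref{lem:first-nec-cdt} at the two fixed directions $v,v'$, each with its own $\epsilon_v,\epsilon_{v'}$.

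The genuine gap is exactly where you flag it: Baire gives $W$ closed with empty interior, not countable, and this cannot be patched in general, since $W \subset \bigcup_n \partial E_n$ and the boundary of a closed subset of $\bb S^1$ can be a Cantor-type set. There is no need for a Cantor--Bendixson iteration; the paper's route to countability is elementary and bypasses Baire entirely. For each $v\in \bb S^1$ it forms the one-sided open arcs $\Gamma_v^+ := \{w\in\bb S^1\setminus\{-v\} : \angle(v,w)\in(0,\epsilon_v)\}$ and $\Gamma_v^- := \{w\in\bb S^1\setminus\{-v\} : \angle(v,w)\in(-\epsilon_v,0)\}$, and sets $W^\pm := \bb S^1\setminus\bigcup_{v}\Gamma_v^\pm$, $W := W^+\cup W^-$. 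The point is that for distinct $w,w'\in W^+$ the arcs $\Gamma_w^+,\Gamma_{w'}^+$ are disjoint: if $u\in\Gamma_w^+\cap\Gamma_{w'}^+$ and, say, $\angle(w,w')>0$, then $0<\angle(w,w')<\angle(w,u)<\epsilon_w$, i.e.~$w'\in\Gamma_w^+$, contradicting $w'\in W^+$. A pairwise disjoint family of nonempty open arcs is countable, so $W^+$ is countable, likewise $W^-$, and $W$ is countable and closed. Any $u\notin W$ then lies in some $\Gamma_v^+\cap\Gamma_{v'}^-$, which supplies the two flanking directions your estimate requires.
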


\begin{proof}
	For $v\in \bb S^1$, let $\epsilon_v \in (0,\pi/4)$ be given by Lemma~\ref{lem:first-nec-cdt} and set
	\begin{align*}
	\Gamma_v^+ &:= \{w\in \bb S^1 \setminus\{-v\}:\, \angle (v,w) \in (0, \epsilon_v)\}~,\\
	\Gamma_v^- &:= \{w\in \bb S^1 \setminus\{-v\}:\, \angle (v,w)\in (-\epsilon_v,0) \}~,
	\end{align*}
	and
	\begin{align*}
	W^+ &:= \bb S^1 \setminus \bigcup_{v\in \bb S^1} \Gamma_v^+~,\\
	W^- &:= \bb S^1 \setminus \bigcup_{v\in \bb S^1} \Gamma_v^-~.
	\end{align*}

Then $W:=W^+\cup W^-$ is clearly closed. To prove that $W$ is countable, we first claim that $\Gamma_w^+\cap \Gamma_{w'}^+=\emptyset$ whenever $w$ and $w'$ are distinct points in $W^+$. Indeed assume that $w$ and $w'$ are distinct points in $\bb S^1$ such that $\Gamma_w^+\cap \Gamma_{w'}^+\not=\emptyset$. Then $w\not= -w'$ and we can assume with no loss of generality that $\angle (w,w') >0$. Let $u\in \Gamma_w^+\cap \Gamma_{w'}^+$. We have $\angle (w',u) >0$. This implies that $0<\angle (w,w') < \angle(w,u) < \epsilon_w$ and hence $w'\in \Gamma_w^+$. It follows that $w'\not \in W^+$ which proves the claim. Thus $\{\Gamma_w^+\}_{w\in W^+}$ is a family of disjoint open subsets of $\bb S^1$ and hence is countable. One shows in a similar way that $W^-$ is  countable.
	

Let $u\in\bb S^1\setminus W$.
	Then there are $v,v'\in\bb S^1$ such that $u\in\Gamma_v^+\cap\Gamma_{v'}^-$. Note that since $0 < \epsilon_v , \epsilon_{v'}  < \pi/4$, we have $\angle (v,v') \in (0,\pi/2)$. In particular, $v'\not\in \R v$. Since $\Gamma_v^+ \cap \Gamma_{v'}^-$ is an open subset of $\bb S^1$, one can find $\delta_{u} \in (0,\pi/4)$ such that $w \in \Gamma_v^+ \cap \Gamma_{v'}^-$ for all $w\in \bb S^1$ such that $\measuredangle(u,w) < \delta_{u}$.
	Define $\alpha_u:=\delta_u / 2$ and 
	\[
	O_{u}: = \{w\in\bb S^1: \measuredangle(u,w)<\alpha_u\}.
	\]
Note that if $w\in O_{u}$ and $w' \in \bb S^1$ are such that $\measuredangle(w,w')<\alpha_u$, then $\measuredangle( u,w')<  \delta_{u} $, and hence, $w'\in\Gamma_v^+ \cap \Gamma_{v'}^-$.

	\begin{figure}[ht]
	\includegraphics[width=0.7\textwidth]{./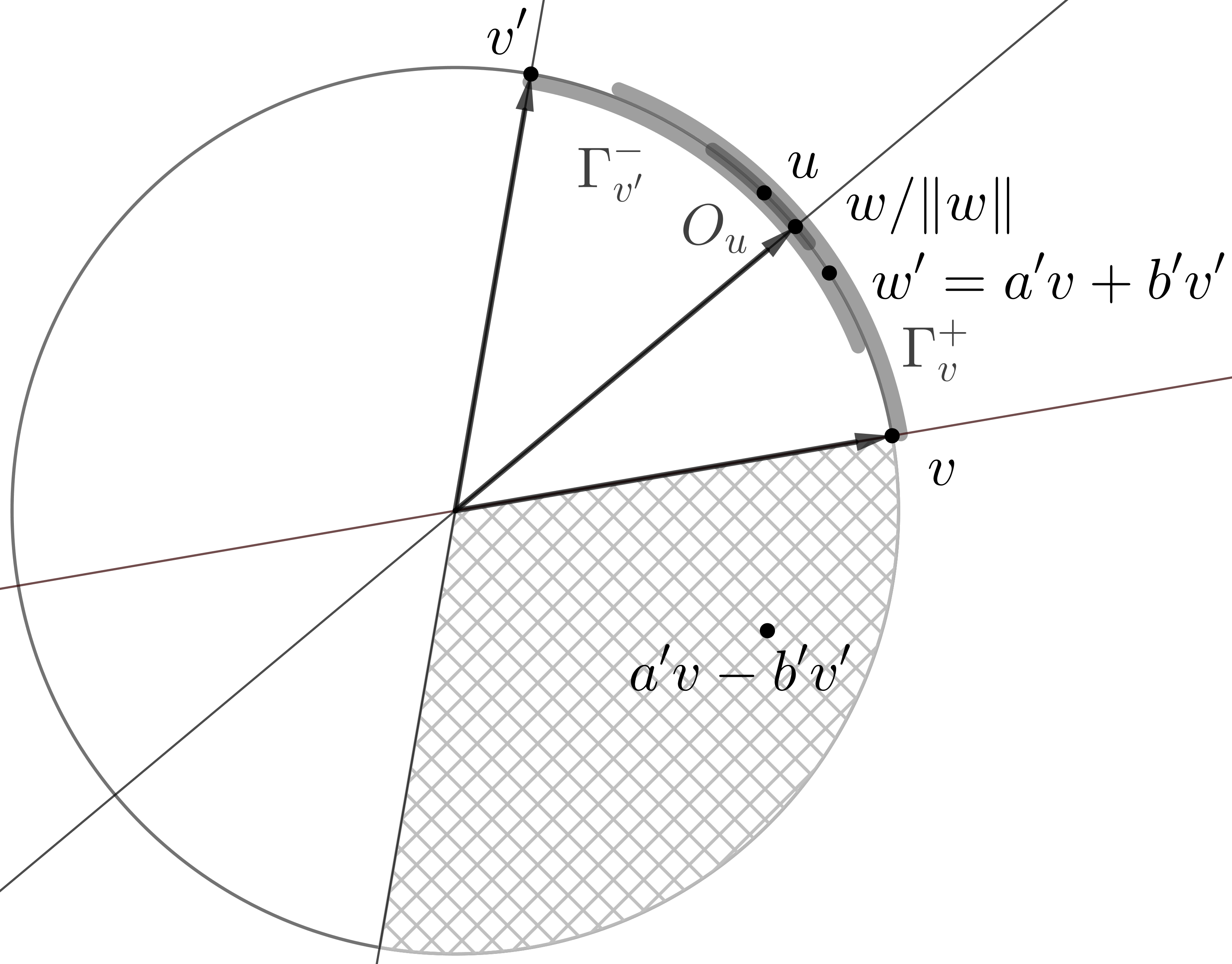}
	\caption{$\angle(a'v-b'v' , w) \in \left(\alpha_u , \pi-\alpha_u\right)$} \label{fig:reflection}
	\end{figure}

	Let $w\in \interior (K) \setminus\{0\}$ be such that $\phi$ is differentiable at $w$ and $w / \|w\| \in O_{u}$, and  let $w'\in\bb S^1$ be such that $\measuredangle(w,w')<\alpha_u$. By the previous remark, we have $w'=a'v+b'v'$ for some $a'>0$ and $b'>0$. Then 
	\[
	\langle \grad\phi(w),w' \rangle
	= a'\langle \grad\phi(w),v \rangle + b' \langle \grad\phi(w),v' \rangle.
	\]
Using \eqref{e:nec-cdt-1} and \eqref{e:nec-cdt-2}, we obtain
	\[
	\langle \grad\phi(w),w' \rangle
	\le -\frac12 a'\omega(v,w) + \frac12 b'\omega(v',w)
	= -\frac12 \omega(a'v-b'v',w)
	\]
	and
	\[
	\langle \grad\phi(w),w' \rangle
	\le \frac12 a'\omega(v,w) - \frac12 b'\omega(v',w)
	= \frac12 \omega(a'v-b'v',w) ,
	\]
	that is,
	\begin{equation*}
	\langle \grad\phi(w),w' \rangle
	\le - \frac{1}{2}\left|\omega(a'v-b'v',w)\right|.
	\end{equation*}
Since $w/\|w\| \in 	O_{u}$, we have $\alpha_u < \angle (v,w) < \epsilon_v$ and $-\epsilon_{v'} < \angle (v',w) < -\alpha_u$. Hence, since $a'>0$ and $b'>0$, we get
\begin{equation*} 
	\angle(a'v-b'v' , w) \in \left(\alpha_u , \pi-\alpha_u\right) ,
\end{equation*}
see Figure~\ref{fig:reflection}.
	Moreover, 
	\[
	C :=\inf\{\|a'v-b'v'\|:a'v+b'v'\in\bb S^1\}>0 ,
	\]
	because the map $(a',b')\in\R^2\mapsto a'v-b'v'\in\R^2$ is a linear isomorphism.
	Therefore,
	\begin{equation*}
	\begin{split}
	\langle \grad\phi(w),w' \rangle
	&\le - \frac12 \left| \omega(a'v-b'v',w)\right|\\
	&= -\frac12 \, \|a'v-b'v'\| \, \|w\| \, |\sin\angle(a'v-b'v',w)| \\
	&\le -m_u \|w\|
	\end{split}
	\end{equation*}
	with $m_u:=C \sin (\alpha_u) / 2$.
\end{proof}

We gather in Theorem~\ref{thm:necessary-cdt-int(K)} below the necessary conditions that can be deduced from Lemma~\ref{lem:first-nec-cdt} and Proposition~\ref{prop:necessary-cdt-int(K)-bis}.

\begin{theorem} \label{thm:necessary-cdt-int(K)}
	Assume that BCP holds in $(\bb H,d)$. 
	Then 
	\begin{equation} \label{e:necessary-cdt-gradphi-int(K)-1}
	\langle \grad \phi(w) , w \rangle \leq 0 \, \text{ for all }w\in \interior(K) \setminus \{0\} 
	\text{ where } \phi \text{ is differentiable}.
	\end{equation}
	
	Moreover, there is a countable closed set $W\subset\bb S^1$ such that $\bb S^1 \setminus W$ can be written as a countable union of relatively open subsets $O_j$ and, for each $j$, there are $m_j >0$ and $\alpha_j\in (0,\pi/2)$ such that
		\begin{equation} \label{e:necessary-cdt-gradphi-int(K)-2}
		\langle \grad\phi(w) , w' \rangle  \le -  m_j \|w \|\|w'\| 
		\end{equation} 
	for all $w\in \interior(K) \setminus \{0\}$ such that $\phi$ is differentiable at $w$ and $ w/\|w\| \in O_j$, and for all $w'\in\R^2\setminus\{0\}$ such that $\measuredangle(w,w')\le\alpha_j$.

	In particular,
	\begin{equation} \label{e:necessary-cdt-gradphi-int(K)-2a.e.}
	\langle \grad\phi(w) , w\rangle < 0 \, \, \text{ for a.e.~} w\in \interior(K) \setminus \{0\}~.
	\end{equation}
\end{theorem}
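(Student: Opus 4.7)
The statement packages three consequences of BCP, each of which is a direct reading of the two preceding results. The plan is: derive \eqref{e:necessary-cdt-gradphi-int(K)-1} by specializing Lemma~\ref{lem:first-nec-cdt} to a radial test vector; obtain \eqref{e:necessary-cdt-gradphi-int(K)-2} from Proposition~\ref{prop:necessary-cdt-int(K)-bis} by selecting a countable subcover; and then combine \eqref{e:necessary-cdt-gradphi-int(K)-2} with Rademacher's theorem to get the strict a.e.~bound \eqref{e:necessary-cdt-gradphi-int(K)-2a.e.}.

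For \eqref{e:necessary-cdt-gradphi-int(K)-1}, I would fix a differentiability point $w\in\interior(K)\setminus\{0\}$ of $\phi$ and apply Lemma~\ref{lem:first-nec-cdt} with the radial direction $v := w/\|w\| \in \bb S^1$. Since $\measuredangle(v,w) = 0 < \epsilon_v$ and $\omega(v,w) = 0$, inequality \eqref{e:nec-cdt-1} collapses to $\langle\nabla\phi(w),v\rangle \le 0$, and multiplying by $\|w\|>0$ yields \eqref{e:necessary-cdt-gradphi-int(K)-1}.

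For \eqref{e:necessary-cdt-gradphi-int(K)-2}, Proposition~\ref{prop:necessary-cdt-int(K)-bis} hands me a countable closed set $W\subset\bb S^1$ together with an open cover $\{O_u\}_{u\in\bb S^1\setminus W}$ of $\bb S^1\setminus W$, each $O_u$ equipped with constants $m_u, \alpha_u$ realizing the pointwise inequality. Since $\bb S^1$ is second countable, the subspace $\bb S^1\setminus W$ is Lindelöf, so I can extract a countable subcover $\{O_{u_j}\}_j$; setting $O_j:=O_{u_j}$, $m_j:=m_{u_j}$, $\alpha_j:=\alpha_{u_j}$ reproduces the claimed data.

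Finally, for \eqref{e:necessary-cdt-gradphi-int(K)-2a.e.}, recall from Proposition~\ref{prop:LDNG} that $\phi\in\Lip_{loc}(\interior(K))$, so Rademacher's theorem gives differentiability of $\phi$ at Lebesgue-a.e.~$w\in\interior(K)$. Moreover, since $W$ is countable, the conical set $\{w\in\R^2\setminus\{0\} : w/\|w\|\in W\}$ is a countable union of half-lines and hence Lebesgue-null in $\R^2$. For a.e.~$w\in\interior(K)\setminus\{0\}$ I may therefore take $w/\|w\|\in O_j$ for some $j$ and apply \eqref{e:necessary-cdt-gradphi-int(K)-2} with $w'=w$ (so that $\measuredangle(w,w)=0<\alpha_j$), obtaining $\langle\nabla\phi(w),w\rangle\le -m_j\|w\|^2<0$. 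No step here presents a genuine obstacle: the substance of the argument is entirely in Lemma~\ref{lem:first-nec-cdt} and Proposition~\ref{prop:necessary-cdt-int(K)-bis}, and the only points that require a moment of care are the extraction of a countable subcover and the observation that radial lines over a countable set form a planar null set.
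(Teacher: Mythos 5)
Your proposal is correct and matches the paper's own (quite terse) proof, which applies Lemma~\ref{lem:first-nec-cdt} with the radial direction $v = w/\|w\|$ for the first inequality and then invokes ``standard arguments'' from Proposition~\ref{prop:necessary-cdt-int(K)-bis} for the remaining two claims. Your filling-in of those standard arguments --- extraction of a countable subcover via the Lindel\"of property of $\bb S^1\setminus W$ (and, if one insists on equality in the union, intersecting each $O_{u_j}$ with the open set $\bb S^1\setminus W$), and the observation that Rademacher plus the nullity of countably many radial half-lines lets one apply \eqref{e:necessary-cdt-gradphi-int(K)-2} with $w'=w$ at a.e.~$w$ --- is exactly what the authors leave to the reader.
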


\begin{proof} The first claim~\eqref{e:necessary-cdt-gradphi-int(K)-1} is a straightforward consequence of Lemma~\ref{lem:first-nec-cdt} when applied with $v:=w/\|w\|$ for $w\in \interior(K) \setminus \{0\}$ such that $\phi$ is differentiable at $w$.
The next two claims~\eqref{e:necessary-cdt-gradphi-int(K)-2} and~\eqref{e:necessary-cdt-gradphi-int(K)-2a.e.} can be deduced from Proposition~\ref{prop:necessary-cdt-int(K)-bis} by standard arguments.
\end{proof}

\smallskip

For rotationally invariant homogeneous distances, Theorem~\ref{thm:necessary-cdt-int(K)} has the following consequence, Corollary~\ref{cor:necessary-rot-inv} below. We refer to the paragraph before Theorem~\ref{thm:intro-characterization-rot-inv-dist} for the definition of rotationally invariant distances and of their radial profile.

\begin{corollary} \label{cor:necessary-rot-inv}
Let $d$ be a rotationally invariant homogeneous distance on $\HH$ with radial profile $\varphi:[-r_d,r_d] \rightarrow [0,+\infty)$. Assume that BCP holds in $(\HH,d)$. Then there is $m>0$ such that $\varphi'(s) \leq -m s$ for a.e.~$s\in (0,r_d)$.
\end{corollary}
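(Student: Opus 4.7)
The plan is to reduce Corollary~\ref{cor:necessary-rot-inv} to~\eqref{e:necessary-cdt-gradphi-int(K)-2} of Theorem~\ref{thm:necessary-cdt-int(K)} by exploiting rotational invariance. First I would record the chain-rule identity: since $\phi(v) = \varphi(\|v\|)$, for $v\neq 0$ the function $\phi$ is differentiable at $v$ if and only if $\varphi$ is differentiable at $\|v\|$ (the ``only if'' part follows by restricting $\phi$ to the radial line through $v$), and in that case
\[
\nabla\phi(v) = \varphi'(\|v\|)\,\frac{v}{\|v\|}.
\]
Since $\varphi \in \Lip_{\mathrm{loc}}((-r_d,r_d))$, $\varphi$ is differentiable at a.e.~$s\in(0,r_d)$, and at each such $s$ the function $\phi$ is differentiable at $s u$ for \emph{every} $u\in\bb S^1$.

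Next I would invoke Theorem~\ref{thm:necessary-cdt-int(K)} to obtain the countable closed set $W\subset \bb S^1$, the decomposition $\bb S^1\setminus W = \bigcup_j O_j$ into relatively open subsets, and the associated constants $m_j>0$ and $\alpha_j\in(0,\pi/2)$. The decisive observation is that, in the rotationally invariant setting, \emph{we only need one direction}: since $W$ is countable, $\bb S^1\setminus W$ is nonempty, so I fix once and for all a single $u_0 \in \bb S^1\setminus W$, lying in some $O_{j_0}$.

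Finally, for a.e.~$s\in(0,r_d)$ I set $w:=s u_0 \in \interior(K)\setminus\{0\}$. By the first step $\phi$ is differentiable at $w$, and $w/\|w\| = u_0 \in O_{j_0}$. Applying~\eqref{e:necessary-cdt-gradphi-int(K)-2} with $w' = w$ (so that $\measuredangle(w,w')=0 \le \alpha_{j_0}$) yields
\[
\varphi'(s)\, s \;=\; \langle \nabla\phi(w), w\rangle \;\le\; -m_{j_0}\,\|w\|^2 \;=\; -m_{j_0}\, s^2,
\]
so $\varphi'(s)\le -m_{j_0}\, s$ for a.e.~$s\in(0,r_d)$, and $m:=m_{j_0}$ does the job.

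There is no serious obstacle: the argument is essentially a one-line specialization of Theorem~\ref{thm:necessary-cdt-int(K)}. The only conceptual point worth emphasizing is that the potential worry about how $m_j$ might depend on $j$ (and thus degenerate over $\bb S^1$) is completely bypassed by rotational invariance, which allows us to evaluate the inequality along one fixed radial ray rather than having to sweep across all directions.
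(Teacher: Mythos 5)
Your proof is correct and takes essentially the same approach as the paper's: both reduce the corollary to~\eqref{e:necessary-cdt-gradphi-int(K)-2} via the chain-rule identity $\nabla\phi(sv)=\varphi'(s)v$ and exploit rotational invariance to restrict attention to a single radial direction outside the countable exceptional set $W$. Your writeup is marginally more explicit in noting that the chain rule holds for \emph{every} $u\in\bb S^1$ at a.e.~radius (the paper states it only for a.e.~$u$), but both versions suffice since $\bb S^1\setminus W$ is nonempty.
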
 

\begin{proof}
Let $\phi$ denote the profile of $d$ so that $\phi(v) = \varphi(\|v\|)$ for $v\in K:=\{ \|v\| \in \R^2 : \|v\| \leq r_d\}$. Since $\phi\in \Lip_{loc}(\interior(K))$ and $\varphi \in \Lip_{loc}((-r_d,r_d))$, for a.e.~$v\in \bb S^1$, we have
\begin{equation*}
\langle \grad\phi(sv) , v \rangle = \varphi'(s) \quad \text{for a.e. } s \in (0,r_d).
\end{equation*}
By~\eqref{e:necessary-cdt-gradphi-int(K)-2}, one can find such a $v\in \mathbb{S}^1$ such that there is $m>0$ such that $ \langle \nabla \phi (sv) , v \rangle \leq -m s$ for a.e.~$s\in (0,r_d)$ which gives the conclusion.
\end{proof}

The "only if" part in Theorem~\ref{thm:intro-characterization-rot-inv-dist} is a consequence of Corollary~\ref{cor:necessary-rot-inv}, which can for instance be used to recover the fact that the Kor\'anyi distance~\eqref{e:def-koranyi-dist} does not satisfy BCP. Indeed, the radial profile of the Kor\'anyi distance is given by $$\varphi(s) :=  \frac{1}{4} \left(1-s^4\right)^{1/2}$$ for $s\in [-1,1]$ and hence $$\sup_{s\in (0,1)} \frac{\varphi'(s)}{s} = \sup_{s\in (0,1)} \frac{-s^2}{2\left(1-s^4\right)^{1/2}} = 0.$$

\smallskip

\begin{remark} \label{rmk:nobcp-cc-box-dist} As a consequence of Corollary~\ref{cor:necessary-rot-inv}, we also explicitly note that if $d$ is a rotationally invariant homogeneous distance on $\HH$ that satisfies BCP, then its radial profile $\varphi:[-r_d,r_d] \rightarrow [0,+\infty)$ is decreasing on $[0,r_d]$. As a straightforward application, one recovers the fact, already mentioned in~\cite[p.1608]{LeDonne_Rigot_Heisenberg_BCP}, that rotationally invariant homogeneous distances on $\HH$ with radial profile $\varphi:[-r_d,r_d] \rightarrow [0,+\infty)$ such that $\varphi(s) = \max_{[0,r_d]} \varphi$ for some $s\in (0,r_d]$ do not satisfy BCP. We recall that examples of such distances are given by the Carnot-Carath\'eodory distance and the rotationally invariant homogeneous distances $d_\infty$ and $\rho_\infty$ given by 
\begin{equation*}
d_\infty(0,(v,z)):= \max \{ \|v\|, 2|z|^{1/2} \} 
\end{equation*}
and
\begin{equation*}
\rho_\infty(0,(v,z)):= \max \{ |x|, |y|, |2z|^{1/2} \}
\end{equation*}
where $v:=(x,y)$.
\end{remark}

\smallskip

We go back now to general homogeneous distances $d$ with profile $\phi: K \rightarrow [0,+\infty)$. As a simple consequence of Theorem~\ref{thm:necessary-cdt-int(K)} together with the fact that $\phi$ is locally Lipschitz on $\interior (K)$, we get that $\phi$ achieves its maximum on $K$ at the origin. 

\begin{theorem} \label{thm:radially-decreasing-profile}
Assume that BCP holds in $(\HH,d)$. Then, for every $w\in \partial K$, the map $t \mapsto \phi(tw)$ is non increasing on $[0,1]$, and hence $\phi(0) = \max\{\phi(w):\, w\in K\}$.
\end{theorem}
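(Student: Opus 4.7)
The plan is to deduce the radial monotonicity from the inequality~\eqref{e:necessary-cdt-gradphi-int(K)-1} provided by Theorem~\ref{thm:necessary-cdt-int(K)}: first along almost every ray emanating from the origin, and then along every such ray by a continuity argument. Fixing $w \in \partial K$ and setting $v := w/\|w\| \in \bb S^1$, it suffices to show that the map $g_v(s) := \phi(sv)$ is non-increasing on $[0, \|w\|]$, since then $t \mapsto \phi(tw) = g_v(t\|w\|)$ will be non-increasing on $[0,1]$.

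First I would handle almost every direction. Let $N \subset \interior(K)$ denote the set of points where $\phi$ fails to be differentiable. Rademacher's theorem gives that $N$ has two-dimensional Lebesgue measure zero, and Fubini in polar coordinates then implies that for Lebesgue-almost every $v \in \bb S^1$, the slice $\{s \in (0, t_{\max}(v)) : sv \in N\}$ has one-dimensional Lebesgue measure zero, where $t_{\max}(v) := \max\{t > 0 : tv \in K\}$. For such $v$, the function $g_v$ is locally Lipschitz on $(0, t_{\max}(v))$ (since $sv$ stays in compact subsets of $\interior(K)$ on which $\phi$ is Lipschitz), hence differentiable a.e., and at each $s$ where $g_v$ is differentiable and $sv \notin N$, the chain rule combined with~\eqref{e:necessary-cdt-gradphi-int(K)-1} gives
\[
g_v'(s) = \langle \nabla\phi(sv), v\rangle = s^{-1}\langle \nabla\phi(sv), sv\rangle \leq 0.
\]
Thus $g_v$ is non-increasing on $(0, t_{\max}(v))$, and its continuous extension to $[0, t_{\max}(v)]$ via continuity of $\phi$ at $0 \in \interior(K)$ and via~\eqref{e:radial-continuity-phi} at the boundary point $t_{\max}(v)\, v$ remains non-increasing.

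Next, to pass from almost every $v$ to every $v \in \bb S^1$, I would approximate $v$ by a sequence $(v_n)$ in the full-measure set above. The map $t_{\max}$ is continuous on $\bb S^1$, being the reciprocal of the Minkowski functional of the convex body $K$, so $t_{\max}(v_n) \to t_{\max}(v)$. For $0 < s_1 < s_2 < t_{\max}(v)$, one has $s_2 < t_{\max}(v_n)$ for large $n$, so $\phi(s_2 v_n) \leq \phi(s_1 v_n)$; continuity of $\phi$ at $s_1 v, s_2 v \in \interior(K)$ allows to pass to the limit, giving $\phi(s_2 v) \leq \phi(s_1 v)$. The monotonicity then extends to $[0, t_{\max}(v)]$ by letting $s_1 \to 0^+$ (continuity of $\phi$ at $0 \in \interior(K)$) and $s_2 \to t_{\max}(v)^-$ (relation~\eqref{e:radial-continuity-phi} applied at $t_{\max}(v)\, v \in \partial K$). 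Specializing to $v = w/\|w\|$ yields the first conclusion of the theorem.

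The second conclusion follows immediately: any $w \in K \setminus \{0\}$ can be written as $w = t \tilde w$ with $\tilde w := t_{\max}(w/\|w\|)\, w/\|w\| \in \partial K$ and $t \in (0,1]$, so $\phi(w) \leq \phi(0)$ by what has just been proved. The main subtlety, and what I expect to be the principal obstacle, lies in the extension step: inequality~\eqref{e:necessary-cdt-gradphi-int(K)-1} is only asserted at points of differentiability of $\phi$, a set of full planar measure that may a priori miss entire rays, so one cannot apply it pointwise along a prescribed ray. Approximation by generic directions, combined with the boundary radial continuity~\eqref{e:radial-continuity-phi} proved in Proposition~\ref{prop:LDNG}, is precisely what circumvents this difficulty.
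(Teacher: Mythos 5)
Your proof is correct and follows essentially the same route as the paper: derive radial monotonicity along a.e. ray from~\eqref{e:necessary-cdt-gradphi-int(K)-1} via Rademacher and a polar-Fubini slicing, then extend to every ray by continuity of $\phi$ on $\interior(K)$ and use~\eqref{e:radial-continuity-phi} to reach the boundary point. You simply spell out the Fubini step and the approximation by generic directions, which the paper leaves implicit.
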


\begin{proof}
For $v\in \bb S^1$, set $t_v := \max\{t>0:\; tv \in K\}$ and $\phi_v(t) := \phi(tv)$ for $t\in [0,t_v]$. Since $\phi\in \Lip_{loc}(\interior(K))$ is differentiable a.e.~on $\interior(K)$, for a.e.~$v\in \bb S^1$, $\phi_v$ is differentiable a.e.~on $(0,t_v)$ with $$\phi_v'(t) =  t^{-1} \langle \grad\phi(tv) , tv \rangle  \leq 0$$ by \eqref{e:necessary-cdt-gradphi-int(K)-1}. It follows that $\phi_v$ is non increasing on $[0,t_v)$ for a.e.~$v\in \bb S^1$. By continuity of $\phi$ on $\interior(K)$, we get that this holds for every $v\in \bb S^1$. Together with~\eqref{e:radial-continuity-phi}, this implies that $t \mapsto \phi(tw)$ is non increasing on $[0,1]$ for every $w\in \partial K$.
\end{proof}

We end this section with a regularity property for the profile of homogeneous distances satisfying BCP on $\HH$, namely, such profiles are differentiable at $0$. The proof relies on an other sufficient condition for the non-validity of BCP that was proved in~\cite{LeDonne_Rigot_Heisenberg_BCP}, see Theorem~\ref{thm:criterion-no-bcp} below, together with Theorem~\ref{thm:radially-decreasing-profile}.

\begin{theorem} \label{thm:diff-at-0} 
Assume that BCP holds in $(\HH,d)$. Then $\phi$ is differentiable at $0$  with $\nabla\phi(0) = 0$.
\end{theorem}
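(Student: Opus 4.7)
The plan is to reduce the claim to a one-sided little-$o$ estimate and then derive a contradiction from the sufficient condition for the non-validity of BCP quoted as Theorem~\ref{thm:criterion-no-bcp}. First, by Theorem~\ref{thm:radially-decreasing-profile}, we know $\phi(v)\le\phi(0)$ for every $v\in K$. In particular, if $\phi$ is differentiable at $0$ then $0$ is an interior maximum of $\phi$, and so $\nabla\phi(0)=0$ is forced. Consequently, it suffices to prove differentiability at $0$; since $\phi(0)-\phi(v)\ge 0$ is automatic, this reduces to establishing
\[
\phi(0)-\phi(v)=o(\|v\|)\quad\text{as }v\to 0\text{ in }K.
\]

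The main step is to argue this by contradiction. Suppose there exist $c>0$ and a sequence $(v_n)\subset K\setminus\{0\}$ with $v_n\to 0$ and $\phi(0)-\phi(v_n)\ge c\,\|v_n\|$. Set $N:=(0,\phi(0))\in\partial B$ and $p_n:=(v_n,\phi(v_n))\in\partial B$, so that $p_n\to N$. A direct computation in the group law gives
\[
N^{-1}\cdot p_n = (v_n,\phi(v_n)-\phi(0)),
\]
which shows that the boundary $\partial B$, seen from the north pole $N$, has a genuine Lipschitz "corner": after left-translating $N$ to the origin, the horizontal part of $N^{-1}\cdot p_n$ has norm $\|v_n\|$ while the vertical part has absolute value bounded below by $c\,\|v_n\|$. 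By passing to a subsequence and using local Lipschitzness of $\phi$ on $\interior(K)$, one may arrange that $v_n/\|v_n\|$ converges to some $\nu\in\bb S^1$ and that $(\phi(v_n)-\phi(0))/\|v_n\|$ converges to some $\ell\in[-L,-c]$, where $L$ is a local Lipschitz bound of $\phi$ at $0$.

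I would then feed this data into Theorem~\ref{thm:criterion-no-bcp} from~\cite{LeDonne_Rigot_Heisenberg_BCP}: the persistent linear-scale corner at $N$ is precisely the kind of non-smoothness that this criterion is designed to detect, and it should produce a sequence of boundary points (obtained by combining left-translations by $N$, the symmetry $p\mapsto p^{-1}$ ensuring $N^{-1}\in\partial B$, and the dilations $\delta_t$) along which the hypotheses of Theorem~\ref{thm:criterion-no-bcp} are verified. Applying that theorem contradicts the standing assumption that BCP holds in $(\HH,d)$. Since the only non-routine input is the translation between the corner estimate and the hypothesis of Theorem~\ref{thm:criterion-no-bcp}, I expect this to be the single technical obstacle; the rest is the bookkeeping already developed for Theorem~\ref{thm:radially-decreasing-profile}, together with the observation that $\nabla\phi(0)=0$ is automatic once differentiability is secured.
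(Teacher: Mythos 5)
Your overall strategy — use Theorem~\ref{thm:radially-decreasing-profile} to reduce to a one-sided $o(\|v\|)$ estimate and then invoke Theorem~\ref{thm:criterion-no-bcp} — is the same as the paper's, but the step you explicitly flag as the ``single technical obstacle'' is in fact the whole content of the proof, and your sketch of how to bridge it does not hold up. Theorem~\ref{thm:criterion-no-bcp} does not take a ``corner at $N$'' as input: its hypotheses compare $\phi$ at two points $t_n^+ v$ and $-t_n^- v$ \emph{along a fixed direction} $v$ and \emph{on opposite sides of the origin}, and they require the explicit two-sided estimate $\phi(t_n^+ v) - \phi(-t_n^- v) < -a\,(t_n^+ + t_n^-)$. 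Your assumption only gives $\phi(0)-\phi(v_n)\ge c\|v_n\|$, a comparison with $\phi(0)$, for a sequence whose directions merely converge; neither the fixed direction nor the second sequence $t_n^-$ nor the two-sided estimate comes for free. Your discussion of producing the needed boundary points by ``combining left-translations by $N$, the symmetry $p\mapsto p^{-1}$, and the dilations $\delta_t$'' is off-track: the criterion requires nothing of the sort, and these operations do not manufacture the required data.

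The missing construction is exactly where the paper's argument lives. The paper first reduces (via a compactness argument over $\bb S^1$ and the local Lipschitz bound) to proving the purely radial statement $\lim_{t\to 0^+}(\phi(tv)-\phi(0))/t = 0$ for each fixed $v\in\bb S^1$, so that one genuinely works along a single ray. Then, assuming a sequence $t_n^+\downarrow 0$ with $(\phi(t_n^+ v)-\phi(0))/t_n^+ < -c$, it \emph{constructs} the second sequence by the explicit choice $t_n^- := \min\{1/2, c/(3L)\}\, t_n^+$, where $L$ is the local Lipschitz constant. This choice is calibrated so that the Lipschitz bound $|\phi(0)-\phi(-t_n^- v)|\le L t_n^-$ and the inequality $t_n^-\le t_n^+/2$ combine to give $(\phi(t_n^+ v)-\phi(-t_n^- v))/(t_n^+ + t_n^-)\le -c/3$, which is the two-sided estimate Theorem~\ref{thm:criterion-no-bcp} demands. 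Finally, the half-space inclusion $\{(tv,z): t_n^+\le t\le t_v,\ z>\phi(t_n^+ v)\}\subset \HH\setminus B$ is verified using the monotonicity of $\phi$ along rays from Theorem~\ref{thm:radially-decreasing-profile}. None of these three steps — the reduction to a fixed ray, the calibrated construction of $t_n^-$, and the half-space check — appear in your proposal, and they are not routine.
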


\begin{proof}
Since $\phi\in \Lip_{loc}(\interior(K))$, it is sufficient to prove that for every $v\in \mathbb{S}^1$, 
\begin{equation} \label{e:diff-at-0}
\lim_{t\rightarrow 0^+} \frac{\phi(tv) - \phi(0)}{t} = 0.
\end{equation}
Indeed, assume that this holds true and let $U\subset \R^2$ be an open neighbourhood of $0$ on which $\phi$ is $L$-Lipschitz for some $L>0$. Let $\varepsilon >0$. By compactness of $\mathbb{S}^1$, one can find finitely many $v_1,\dots,v_k \in \mathbb{S}^1$ so that, for every $w\in \mathbb{R}^2$, there is $1\leq j \leq k$ such that $\measuredangle (v_j,w) \leq \varepsilon$. 
Next let $w\in U$. Let $1\leq j \leq k$ be such that $\measuredangle (v_j,w) \leq \varepsilon$. We have
\begin{equation*}
\begin{split}
\frac{|\phi(w) - \phi(0)|}{\|w\|} &\leq \frac{|\phi(w) - \phi(\langle v_j,w \rangle v_j)|}{\| w \|} + \frac{|\phi(\langle v_j,w \rangle v_j) - \phi(0)|}{\|w\|} \\
&\leq L \,\frac{\|w - \langle v_j,w \rangle v_j \|}{\| w \|} + \frac{|\phi(\langle v_j,w \rangle v_j) - \phi(0)|}{|\langle v_j,w \rangle|}  \\
&\leq L \sin \varepsilon + \frac{|\phi(\langle v_j,w \rangle v_j) - \phi(0)|}{|\langle v_j,w \rangle|} .
\end{split}
\end{equation*}
By~\eqref{e:diff-at-0}, we get
\begin{equation*}
\limsup_{\|w\| \rightarrow 0} \frac{|\phi(w) - \phi(0)|}{\|w\|} \leq L \sin \varepsilon,
\end{equation*}
and since this holds for every $\varepsilon >0$, it follows that $\phi$ is differentiable at $0$ with $\nabla\phi(0) = 0$.

To prove~\eqref{e:diff-at-0}, let $v\in \mathbb{S}^1$. We know from Theorem~\ref{thm:radially-decreasing-profile} that $$\limsup_{t\rightarrow 0^+} \frac{\phi(tv) - \phi(0)}{t} \leq 0.$$
By contradiction, assume that $$\liminf_{t\rightarrow 0^+} \frac{\phi(tv) - \phi(0)}{t} < 0.$$
Then there is $c>0$ and a positive sequence $t_n^+$ decreasing to $0$ such that
$$\frac{\phi(t_n^+ v) - \phi(0)}{t_n^+} < -c$$ for every $n\geq 0$. Set $\phi_v(t):=\phi(tv)$. With no loss of generality, we can assume that $t_0^+$ is small enough so that $\phi_v$ is $L$-Lipschitz on $[-t_0^+,t_0^+]$ for some $L>0$ and we set $$t_n^- := \min \left\{\frac{1}{2},\frac{c}{3L}\right\} t_n^+ .$$ We have $$\lim_{n\rightarrow +\infty} t_n^+ + t_n^- \leq \frac{3}{2}\, \lim_{n\rightarrow +\infty}  t_n^+= 0$$
and
\begin{equation*}
\begin{split}
\frac{\phi(t_n^+ v) - \phi(-t_n^- v)}{t_n^+ + t_n^-} &\leq\frac{\phi(t_n^+ v) - \phi(0)}{t_n^+} \cdot \frac{t_n^+}{t_n^+ + t_n^-} + \frac{\phi(0) - \phi(-t_n^- v)}{t_n^+ + t_n^-} \\
&\leq - \frac{2}{3} c + L \frac{t_n^-}{t_n^+} \leq -\frac{c}{3}.
\end{split}
\end{equation*}
Since we know from Theorem~\ref{thm:radially-decreasing-profile} that $\phi_v$ is non increasing on $[0,t_v]$ where $t_v:=\max\{t>0: tv \in K\}$, we also have 
$$\{ (tv,z) \in \HH: \; t_n^+ \leq t \leq t_v,\, z> \phi(t_n^+ v)\} \subset \HH \setminus B$$ 
and we get a contradiction from Theorem~\ref{thm:criterion-no-bcp} below.
\end{proof}

\begin{theorem} \cite[Theorem~6.3]{LeDonne_Rigot_Heisenberg_BCP} \label{thm:criterion-no-bcp}
Assume that there are $a>0$, $t^+>0$, $v\in \mathbb{S}^1$, and sequences $p_n^+=(t_n^+ v, \phi(t_n^+ v))\in \partial B$, $p_n^-=(-t_n^- v,\phi(-t_n^- v))\in \partial B$ such that 
\begin{gather*}
-t_n^-<0<t_n^+ , \quad \lim_{n\rightarrow +\infty} t_n^+ + t_n^- = 0, \\
 \phi(t_n^+ v) - \phi(-t_n^- v) < -a \,(t_n^+ + t_n^-), \\
\{ (tv,z) \in \HH: \; t_n^+ \leq t \leq t^+,\, z> \phi(t_n^+ v)\} \subset \HH \setminus B.
\end{gather*}
Then BCP does not hold in $(\HH,d)$.
\end{theorem}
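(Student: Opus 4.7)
The plan is to derive Theorem~\ref{thm:criterion-no-bcp} from Theorem~\ref{thm:criterum-for-bcp} by exhibiting, for every $\epsilon>0$, a point of $\Omega_\epsilon(v)$: namely, some $q\in\partial B$ with $\pi(q)\neq 0$, $\measuredangle(v,\pi(q))<\epsilon$, and $(\tau_q)_*(-v)$ pointing out of $B$ at $q$. The natural on-axis candidate $q=p_n^+$ will not work as is, because $(\tau_{p_n^+})_*(-v)=(-v,0)$ is purely horizontal and typically enters $B$ back across the cliff of $\partial B$; I therefore plan to perturb $p_n^+$ slightly off the axis in order to tilt this direction upward.

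Specifically, I would pick $e\in\bb S^1$ with $e\perp v$ and $\omega(e,v)=-1$, and for small parameters $\sigma\in[0,t^+-t_n^+]$ and $\mu>0$ to be chosen, consider the upper-sheet point
\[
q_n(\sigma,\mu) := \bigl((t_n^++\sigma)v+\mu e,\,\phi((t_n^++\sigma)v+\mu e)\bigr)\in\partial B.
\]
A direct computation via the formula $(\tau_q)_*(v')=(v',\omega(\pi(q),v')/2)$ gives $(\tau_{q_n(\sigma,\mu)})_*(-v)=(-v,\mu/2)$, which has positive vertical component. Meanwhile $\measuredangle(v,\pi(q_n(\sigma,\mu)))=\arctan(\mu/(t_n^++\sigma))$, so the angle condition can be enforced by taking $\mu$ small relative to $t_n^++\sigma$.

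The main step will be to verify that $(-v,\mu/2)$ points out of $B$ at $q_n(\sigma,\mu)$, which for the upper sheet amounts to checking that, for $s>0$ small and for small perpendicular perturbations of the axis direction,
\[
\phi((t_n^++\sigma-s)v+\mu e) \;<\; \phi((t_n^++\sigma)v+\mu e)+\tfrac{s\mu}{2}
\]
holds uniformly. The principal obstacle is that the hypotheses only control $\phi$ along the axis $\R v$. I plan to transport this control off-axis using the local Lipschitz regularity of $\phi$ on $\interior(K)$, combining the plateau bound $\phi(tv)\le\phi(t_n^+v)$ for $t\in[t_n^+,t^+]$ (the third hypothesis) with the cliff drop $\phi(-t_n^-v)-\phi(t_n^+v)>aL_n$ (the second hypothesis, where $L_n:=t_n^++t_n^-$), and then carefully balancing $\mu,\sigma,n$ so that the positive tilt $\mu/2$ dominates the off-axis fluctuations of $\phi$ while keeping the angle below $\epsilon$. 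If the balancing succeeds along a suitable sequence, then $\Omega_\epsilon(v)\neq\emptyset$ for every $\epsilon>0$ and Theorem~\ref{thm:criterum-for-bcp} concludes the failure of BCP.
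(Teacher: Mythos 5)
Your plan is to derive Theorem~\ref{thm:criterion-no-bcp} from Theorem~\ref{thm:criterum-for-bcp} by exhibiting, for every $\epsilon>0$, a point of $\Omega_\epsilon(v)$. This reduction cannot work in general, because the hypotheses of Theorem~\ref{thm:criterion-no-bcp} do not imply those of Theorem~\ref{thm:criterum-for-bcp}. Consider the rotationally invariant homogeneous distance with radial profile $\varphi(s)=c-|s|$ on $[-r_d,r_d]$ (a valid profile by Proposition~\ref{prop:concave-profile} as soon as $c\ge r_d+r_d^2/4$), so that $\phi(w)=c-\|w\|$. Taking $t_n^+=1/n$, $t_n^-=1/(4n)$, $a=1/2$, and any fixed $t^+\in(0,r_d)$, the three hypotheses of Theorem~\ref{thm:criterion-no-bcp} hold for every $v\in\bb S^1$. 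Yet $\Omega_\epsilon(v)=\emptyset$ for every $v$ and every small $\epsilon>0$: at an upper-sheet point $q$ with $\pi(q)\neq 0$ and $\measuredangle(v,\pi(q))$ small, the vector $(\tau_q)_*(-v)=\bigl(-v,\,\omega(\pi(q),-v)/2\bigr)$ has a vertical component of size $O(\measuredangle(v,\pi(q)))$, while $\phi(\pi(q)-sv)-\phi(\pi(q))=\|\pi(q)\|-\|\pi(q)-sv\|\approx s\,\langle\pi(q)/\|\pi(q)\|,v\rangle\approx s$. So the ray $q+s(\tau_q)_*(-v)$ stays in $B$ for all small $s>0$, and $(\tau_q)_*(-v)$ does not point out; lower-sheet and side points behave similarly. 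Hence Theorem~\ref{thm:criterum-for-bcp} is silent on this example, and so is your strategy, although Theorem~\ref{thm:criterion-no-bcp} does apply and BCP does fail.

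The obstruction is exactly where you flag it but is fatal, not merely delicate. After the off-axis tilt, ``points out'' on the upper sheet requires $\phi(\pi(q)-sv)-\phi(\pi(q))<s\mu/2-(\text{errors})$ for all small $s>0$; in the limit $s\to0^+$ this is a pointwise bound on the one-sided directional derivative $\partial_{-v}\phi$ at $\pi(q)$. The hypotheses of Theorem~\ref{thm:criterion-no-bcp}, however, are purely $C^0$ and one-dimensional: they give an upper bound on $\phi$ along the plateau $[t_n^+,t^+]v$ (no lower bound on $\phi((t_n^++\sigma)v)$, hence no control on the increment when you step back from $\sigma>0$), and a secant drop of size $\ge a(t_n^++t_n^-)$ across the cliff, which actually pushes $\partial_{-v}\phi$ to be at least $a$ somewhere, i.e.\ the wrong sign for you. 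Transporting off-axis costs a Lipschitz error $L\mu$ with $L$ fixed, so the tilt $\mu/2$, which must go to $0$ with $\epsilon$, can never dominate either the fixed secant slope $a$ or the fixed Lipschitz constant $L$. In short, Theorem~\ref{thm:criterion-no-bcp} is genuinely stronger than what the pointing-out criterion of Theorem~\ref{thm:criterum-for-bcp} can detect, and its proof in the cited source is a direct construction of arbitrarily large Besicovitch families from the plateau--cliff geometry rather than a reduction to $\Omega_\epsilon(v)\neq\emptyset$. The present paper cites that proof and does not reproduce it.
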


\subsection{More necessary conditions under regularity assumptions} \label{subsect:additional-nec-cdt} 
We show in this section that one can slightly improve the necessary conditions proved in Section~\ref{subsect:nec-cdt-general-case} when a priori additional regularity is assumed for the profile of the homogeneous distance. As in the previous section, we let $d$ denote a homogeneous distance on $\HH$ with unit ball centered at the origin $B$ and profile $\phi:K \rightarrow [0,+\infty)$ (see Proposition~\ref{prop:LDNG}).

\medskip

Let $w\in \interior (K)$, assume that $\phi$ is differentiable on a neighbourhood of $w$, and let $v\in\R^2 \setminus\{0\}$. Given a smooth curve $\gamma : I \rightarrow \interior (K)$ defined on some open interval $I\subset \R$ containing $0$ and  
 such that $\gamma(0) = w$, we set 
$q^+_\gamma(t):=(\gamma(t),\phi(\gamma(t)))$ and  \begin{equation*}
	g_{\gamma,v}(t) := \langle \grad F_+(q^+_\gamma(t)), (\tau_{q_\gamma(t)})_*(v) \rangle  = -\langle \grad\phi(\gamma(t)) , v \rangle + \frac12 \omega(\gamma(t),v)
	\end{equation*}
for $t\in I$ small enough. Similarly, we set $q^-_\gamma(t):=(-\gamma(t),-\phi(\gamma(t)))$ and  
\begin{equation*}
	h_{\gamma,v}(t) := \langle \grad F_-(q^-_\gamma(t)), (\tau_{q^-_\gamma(t)})_*(-v) \rangle  = -\langle \grad\phi(\gamma(t)) , v \rangle - \frac12 \omega(\gamma(t),v)
	\end{equation*}
for $t\in I$ small enough. See~\eqref{e:def-F} for the definition of $F_+$ and $F_-$.

\medskip

If $\grad\phi$ is differentiable at $\gamma(t)$, then $g_{\gamma,v}$ and $h_{\gamma,v}$ are differentiable at $t$ with 
\begin{align}
\label{e:g'}
g'_{\gamma,v}(t) = - \langle H\phi(\gamma(t)) v , \gamma'(t) \rangle + \frac{1}{2} \omega(\gamma'(t),v) \\
\label{e:h'}
h'_{\gamma,v}(t) = - \langle H\phi(\gamma(t)) v, \gamma'(t) \rangle - \frac{1}{2} \omega(\gamma'(t),v)
\end{align}
where $H\phi$ denotes the Hessian of $\phi$.

\medskip

We first prove that~\eqref{e:necessary-cdt-gradphi-int(K)-1} can be improved into a strict inequality at points $w\in \interior(K) \setminus \{0\}$ where $\grad\phi$ is differentiable.

\begin{proposition} \label{prop:nec-cdt-intK-phidiff} Assume that BCP holds on $(\HH,d)$. Let $w\in \interior(K) \setminus \{0\}$ be such that $\phi$ is differentiable on a neighbourhood of $w$ and $\grad\phi$ is differentiable at $w$. Then $\langle  \grad\phi(w) , w\rangle < 0$.
\end{proposition}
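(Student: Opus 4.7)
The plan is to argue by contradiction: assume $\langle \grad\phi(w), w\rangle = 0$ (we already know from~\eqref{e:necessary-cdt-gradphi-int(K)-1} that this quantity is $\leq 0$) and derive a contradiction by extracting a second-order condition out of Proposition~\ref{prop:necessary-condition-bcp}. The idea is to vary $w$ along a smooth curve and compare the two one-sided obstructions coming from the upper and lower boundary pieces of $B$.

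First I would set up the two curves. Since $w\in\interior(K)\setminus\{0\}$ and $\phi$ is differentiable on a neighborhood of $w$, for any $v_0\in\R^2$ the curve $\gamma(t):=w+tv_0$ stays in $\interior(K)$ and the composed function $\phi\circ\gamma$ is well-defined for $t$ close to $0$. Near $q_\gamma^+(0)=(w,\phi(w))$ the ball $B$ is described locally by the single constraint $F_+\le 0$ (the opposite inequality $F_-\le 0$ is strict there since $\phi(w)>0$ by~\eqref{e:positivity-profile} and $-\phi(-w)<\phi(w)$); analogously near $q_\gamma^-(0)$ only $F_-\le 0$ is active. Applying Proposition~\ref{prop:necessary-condition-bcp} with $v=w$ at the point $q_\gamma^+(t)$ (whose projection $\gamma(t)$ lies in a conical neighborhood of $w$ for $t$ small, so that $\measuredangle(w,\gamma(t))<\epsilon_w$) yields $g_{\gamma,w}(t)\ge 0$; applying it with $v=-w$ at $q_\gamma^-(t)$ yields $h_{\gamma,w}(t)\ge 0$.

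Next I would evaluate these functions at $t=0$ and use the assumed identity. A direct computation from the definitions of $g_{\gamma,v}$ and $h_{\gamma,v}$, together with $\omega(w,w)=0$, gives
\[
g_{\gamma,w}(0)=h_{\gamma,w}(0)=-\langle\grad\phi(w),w\rangle=0.
\]
Since $g_{\gamma,w}$ and $h_{\gamma,w}$ are $\geq 0$ near $0$ and both vanish at $t=0$, the point $t=0$ is an interior local minimum for each. Because $\grad\phi$ is assumed differentiable at $w$, formulas~\eqref{e:g'} and~\eqref{e:h'} apply and show that $g_{\gamma,w}$ and $h_{\gamma,w}$ are differentiable at $0$; a local minimum of a differentiable function at an interior point forces $g'_{\gamma,w}(0)=h'_{\gamma,w}(0)=0$.

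Finally I would subtract these two vanishing derivatives. Using~\eqref{e:g'} and~\eqref{e:h'} with $v=w$ and $\gamma'(0)=v_0$, the Hessian terms cancel and one obtains
\[
0=g'_{\gamma,w}(0)-h'_{\gamma,w}(0)=\omega(v_0,w).
\]
Since $v_0\in\R^2$ was arbitrary, this would force $\omega(\cdot,w)\equiv 0$; choosing $v_0=w^\perp$ (the Euclidean $90^\circ$ rotation of $w$) yields $\omega(w^\perp,w)=\|w\|^2>0$, a contradiction. The only step requiring some care is the local description of $B$ that justifies invoking Proposition~\ref{prop:necessary-condition-bcp} with $F=F_\pm$, which I would handle up front using $\phi(w)>0$ together with the continuity of $\phi$ at $w$.
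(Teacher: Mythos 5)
Your argument is correct and coincides with the paper's proof: both assume by contradiction that $\langle\grad\phi(w),w\rangle=0$, apply Proposition~\ref{prop:necessary-condition-bcp} along a curve through $w$ to get $g_{\gamma,w}\ge 0$ and $h_{\gamma,w}\ge 0$ near $0$ with $g_{\gamma,w}(0)=h_{\gamma,w}(0)=0$, deduce $g'_{\gamma,w}(0)=h'_{\gamma,w}(0)=0$, and subtract via~\eqref{e:g'}--\eqref{e:h'} to force $\omega(v_0,w)=0$ for all $v_0$, contradicting $w\ne 0$. (Minor slip: with the paper's orientation $\omega(w^\perp,w)=-\|w\|^2$ when $w^\perp$ is the counterclockwise rotation, but of course this is still nonzero, so the contradiction stands.)
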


\begin{proof}
We know from~\eqref{e:necessary-cdt-gradphi-int(K)-1} that $\langle  \grad\phi(w),w \rangle \leq 0$. By contradiction, assume that $\langle \grad\phi(w),w\rangle = 0$. Let $v\in \R^2$ and $\gamma : I \rightarrow \interior (K)$ be a smooth curve defined on some open interval $I\subset \R$ containing $0$
such that $\gamma(0) = w$ and $\gamma'(0) = v$. We have $\pi(q^+_\gamma(t)) = \gamma(t)\not=0$ for $t\in I$ small enough and $\measuredangle (w,\gamma(t)) \rightarrow 0$ when $t\rightarrow 0$, hence it follows from Proposition~\ref{prop:necessary-condition-bcp} that $g_{\gamma,w}(t) \geq 0$ for $t\in I$ small enough. Since $g_{\gamma,w}(0) =-\langle \grad\phi(\gamma(0)) , w \rangle +  \omega(\gamma(0),w)/2= -\langle \grad\phi(w) , w \rangle +  \omega(w,w)/2 =0$ and $g_{\gamma,w}$ is differentiable at $0$, it follows that $g'_{\gamma,w}(0)= 0$ where (see~\eqref{e:g'})
\begin{equation*}
\begin{split}
g'_{\gamma,w}(0) &= -\langle H\phi(\gamma(0))w,\gamma'(0)\rangle +\frac{1}{2}\omega(\gamma'(0),w)\\
&= -\langle H\phi(w)w,v\rangle + \frac{1}{2}\omega(v,w)~.
\end{split}
\end{equation*}
Similarly, $h_{\gamma,w}(t) \geq 0$ for $t\in I$ small enough with $h_{\gamma,w}(0) = 0$ and $h_{\gamma,w}$ differentiable at $0$. Hence it follows that $h'_{\gamma,w}(0)= 0$ where (see~\eqref{e:h'})
\begin{equation*}
h'_{\gamma,w}(0) =-\langle H\phi(w)w,v\rangle  - \frac{1}{2}\omega(v,w)~.
\end{equation*}
It follows that $\omega(v,w)=0$ for all $v\in \R^2$ and we get a contradiction since $w\not=0$.
\end{proof}

Next, we improve Theorem~\ref{thm:radially-decreasing-profile} and show that the profile has a strict maximum at the origin if $\grad \phi$ is assumed to be differentiable on $\interior(K) \setminus \{0\}$.

\begin{proposition}
Assume that BCP holds on $(\HH,d)$. Assume that $\phi \in C^1(\interior(K) \setminus \{0\})$ with $\grad \phi$ differentiable on $\interior(K) \setminus \{0\}$. Then $ \phi(w) <\phi(0)$ for all $w\in K \setminus\{0\}$.
\end{proposition}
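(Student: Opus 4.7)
The plan is to strengthen the monotonicity argument of Theorem~\ref{thm:radially-decreasing-profile} by replacing the a.e.\ non-strict inequality $\langle \grad\phi(w),w\rangle \leq 0$ with the pointwise strict inequality provided by Proposition~\ref{prop:nec-cdt-intK-phidiff}, which applies here because the regularity hypothesis $\phi \in C^1(\interior(K)\setminus\{0\})$ with $\grad\phi$ differentiable on $\interior(K)\setminus\{0\}$ supplies exactly the differentiability of $\grad\phi$ needed.

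Concretely, I would first fix $w\in K\setminus\{0\}$ and introduce $f_w(t):=\phi(tw)$ on the relevant interval. Since $K$ is convex and $0\in \interior(K)$, the segment $\{tw : t\in [0,1)\}$ lies entirely in $\interior(K)$, while $f_w$ is continuous up to $t=1$ by the radial continuity property~\eqref{e:radial-continuity-phi} (this is automatic if $w\in \interior(K)$). Because $\phi\in C^1(\interior(K)\setminus\{0\})$, the map $f_w$ is differentiable on $(0,1)$ with
\[
f_w'(t) \;=\; \langle \grad\phi(tw),\, w\rangle \;=\; \frac{1}{t}\,\langle \grad\phi(tw),\, tw\rangle.
\]
By Proposition~\ref{prop:nec-cdt-intK-phidiff} applied at the point $tw\in \interior(K)\setminus\{0\}$ (which is allowed since $\grad\phi$ is assumed differentiable there), the right-hand side is strictly negative. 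Hence $f_w$ is strictly decreasing on $(0,1)$, and by continuity at $t=0$ it is strictly decreasing on $[0,1)$.

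It remains to handle the endpoint $t=1$, which is the only subtle point since $\phi$ need not be continuous on $\partial K$ in general. For $w\in \interior(K)$ there is nothing to do: $\phi(w)=f_w(1)<f_w(0)=\phi(0)$. For $w\in\partial K$, fix any $t_0\in(0,1)$; strict decrease of $f_w$ on $[0,1)$ gives $f_w(s) < f_w(t_0) < f_w(0)=\phi(0)$ for all $s\in (t_0,1)$, and letting $s\to 1^{-}$ the radial continuity~\eqref{e:radial-continuity-phi} from Proposition~\ref{prop:LDNG} yields $\phi(w)=\lim_{s\to 1^{-}}f_w(s)\leq f_w(t_0)<\phi(0)$.

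The only step requiring any thought is the second one: one must verify that for $w\in K\setminus\{0\}$ the whole open segment from $0$ to $w$ lies in $\interior(K)$ (so that Proposition~\ref{prop:nec-cdt-intK-phidiff} is applicable along the entire path), which follows from convexity of $K$ together with $0\in \interior(K)$. Everything else is a direct reduction to the strict radial derivative inequality, so I do not anticipate any genuine obstacle; the proof is essentially Theorem~\ref{thm:radially-decreasing-profile} with ``$\leq$'' upgraded to ``$<$'' via Proposition~\ref{prop:nec-cdt-intK-phidiff} and the boundary handled by~\eqref{e:radial-continuity-phi}.
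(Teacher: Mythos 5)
Your proof is correct and follows essentially the same route as the paper's: reduce to the one-dimensional radial map $t\mapsto\phi(tw)$, obtain strict decrease on $(0,1)$ from the pointwise strict inequality of Proposition~\ref{prop:nec-cdt-intK-phidiff} (applicable since the hypotheses supply the needed differentiability of $\grad\phi$ at every point of $\interior(K)\setminus\{0\}$), and handle boundary points via the radial continuity~\eqref{e:radial-continuity-phi}. The only difference is that you spell out the $\partial K$ endpoint argument more explicitly than the paper does, which is a welcome clarification rather than a deviation.
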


\begin{proof} Let $w \in \interior(K) \setminus \{0\}$. Then $t\mapsto \phi(tw) \in C([0,1]) \cap C^1((0,1))$ with $$\frac{d}{dt} \phi(tw) = \langle \grad\phi(tw) , w \rangle  = t^{-1} \langle \grad\phi(tw) , tw \rangle  <0$$ for all $t\in (0,1)$ by Proposition~\ref{prop:nec-cdt-intK-phidiff}. Hence $t\in (0,1) \mapsto \phi(tw) $ is decreasing for every $w \in \interior(K) \setminus \{0\}$, which, together with~\eqref{e:radial-continuity-phi}, gives the conclusion.
\end{proof}

\begin{remark} \label{rk:nobcp-non-cont-profile} The previous proposition can be applied to prove that the homogeneous distance whose profile $\phi_1: \bb D \rightarrow [1/4,+\infty)$ is given in Example~\ref{ex:non-cont-profile} does not satisfy BCP. Indeed, in this example, one has $\phi_1 \in C^\infty(\interior(\bb D))$ with $\phi_1(0) = \phi_1(x,0)$ for every $x$ such that $|x|\leq 1$.
\end{remark} 

We now turn to the study of the properties of the Hessian of the profile at the origin. First, we prove that if $\phi$ is assumed to be differentiable on a neighbourhood of $0$ with $\grad\phi$ differentiable at $0$, then $H\phi(0)$ is semi-definite negative.

\begin{proposition} \label{prop:nec-cdt-Hphi-1}
Assume that BCP holds on $(\HH,d)$. Assume that $\phi$ is differentiable on a neighborhood of $0$ with $\grad\phi$ differentiable at $0$. Then the Hessian $H\phi(0)$ of $\phi$ at 0 is semi-definite negative.
\end{proposition}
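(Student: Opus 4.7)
The plan is to exploit the radial necessary condition given by Lemma~\ref{lem:first-nec-cdt} together with the infinitesimal behavior of $\nabla\phi$ near the origin provided by its differentiability. Since we already know from Theorem~\ref{thm:diff-at-0} that $\phi$ is differentiable at $0$ with $\nabla\phi(0)=0$, the differentiability of $\nabla\phi$ at $0$ means that, for every $v\in\bb S^1$,
\[
\nabla\phi(tv) \;=\; t\, H\phi(0)\, v \;+\; o(t) \quad \text{as } t\to 0^+.
\]

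I would then fix $v\in\bb S^1$ arbitrary and apply Lemma~\ref{lem:first-nec-cdt} with $w=tv$ for all sufficiently small $t>0$. Since $0\in\interior(K)$, such $w$ lies in $\interior(K)\setminus\{0\}$ for small $t$; since $\measuredangle(v,w)=0<\epsilon_v$, and $\phi$ is assumed to be differentiable on a neighborhood of $0$, the hypotheses of Lemma~\ref{lem:first-nec-cdt} are met. The key observation is that $\omega(v,tv)=t\omega(v,v)=0$, so both inequalities \eqref{e:nec-cdt-1} and \eqref{e:nec-cdt-2} collapse to the single radial inequality
\[
\langle \nabla\phi(tv),\, v\rangle \;\le\; 0.
\]

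Substituting the Taylor expansion above gives $\langle H\phi(0)v,v\rangle\, t + o(t)\le 0$ for all sufficiently small $t>0$. Dividing by $t$ and letting $t\to 0^+$ yields $\langle H\phi(0)v,v\rangle\le 0$, and since $v\in\bb S^1$ was arbitrary this extends by homogeneity to all $v\in\R^2$, proving that $H\phi(0)$ is semi-definite negative.

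There is essentially no obstacle in this argument: once one recognizes that Lemma~\ref{lem:first-nec-cdt} applied along the radial line $t\mapsto tv$ kills the $\omega$-term by antisymmetry, the conclusion follows immediately from a first-order Taylor expansion of $\nabla\phi$ at $0$. The only subtlety worth noting is that we cannot apply Proposition~\ref{prop:nec-cdt-intK-phidiff} pointwise at $w=tv\neq 0$ (we are not assuming $\nabla\phi$ is differentiable away from $0$), which is precisely why we revert to Lemma~\ref{lem:first-nec-cdt}.
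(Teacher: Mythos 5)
Your proof is correct and is essentially the paper's proof in a slightly repackaged form: the paper applies Proposition~\ref{prop:necessary-condition-bcp} to the auxiliary function $g_{\gamma,v}(t)=-\langle\nabla\phi(\gamma(t)),v\rangle+\tfrac12\omega(\gamma(t),v)$ along a curve through $0$ with velocity $v$, then reads off $g'_{\gamma,v}(0)=-\langle H\phi(0)v,v\rangle\ge 0$, while you go through Lemma~\ref{lem:first-nec-cdt} (itself a consequence of Proposition~\ref{prop:necessary-condition-bcp}) along the radial line $t\mapsto tv$, where the $\omega$-term vanishes identically, and then unpack the derivative via the Taylor expansion of $\nabla\phi$ at $0$. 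The computations coincide, and your concluding remark about why one cannot invoke Proposition~\ref{prop:nec-cdt-intK-phidiff} under the stated hypotheses is also apt.
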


\begin{proof}
We know from Theorem~\ref{thm:diff-at-0} that $\grad\phi(0)=0$. Let $v\in\R^2 \setminus\{0\}$ and $\gamma: I \rightarrow \interior (K)$ be a smooth curve defined on some open interval $I\subset \R$ containing $0$ 
 such that $\gamma(0) = 0$ and $\gamma'(0) = v$. We have $\pi(q^+_\gamma(t)) = \gamma(t)\not=0$ for $t\in I\setminus\{0\}$ small enough and $\measuredangle (v,\gamma(t)) \rightarrow 0$ when $t\downarrow 0^+$, hence it follows from Proposition~\ref{prop:necessary-condition-bcp} that $g_{\gamma,v}(t)\geq 0$ for $t>0$ small enough. On the other hand, we have $g_{\gamma,v}(0)= -\langle \grad\phi(0) , v \rangle +  \omega(v,v)/2 = 0$. Since $g_{\gamma,v}$ differentiable at $0$, it follows that $g_{\gamma,v}'(0) \geq 0$ where (see~\eqref{e:g'})
\begin{equation*}
g'_{\gamma,v}(0) = -\langle H\phi(\gamma(0))v,\gamma'(0)\rangle = -\langle H\phi(0)v,v\rangle~. 
\end{equation*}
Hence $\langle H\phi(0)v,v\rangle \leq 0$ for all $v\in\R^2\setminus\{0\}$, that is, $H\phi(0)$ is semi-definite negative.
\end{proof}

Finally, we improve the previous proposition and show that $H\phi(0)$ is definite negative if $H\phi$ is assumed to be differentiable at $0$.

\begin{proposition} \label{prop:nec-cdt-Hphi-2}
Assume that BCP holds on $(\HH,d)$. Assume that $\phi$ and $\grad\phi$ are differentiable on a neighborhood of $0$ with $H\phi$ differentiable at $0$. Then $H\phi(0)$ is definite negative.
\end{proposition}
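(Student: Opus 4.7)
The plan is to argue by contradiction, refining the first-order analysis of Proposition~\ref{prop:nec-cdt-Hphi-1} to a second-order analysis. Suppose that $H\phi(0)$ is not definite negative. Since it is already known to be semi-definite negative by Proposition~\ref{prop:nec-cdt-Hphi-1}, and since a non-positive symmetric matrix with a nontrivial kernel annihilates some nonzero vector, there exists $v_0\in\R^2\setminus\{0\}$ with $H\phi(0)v_0=0$. Recall also that $\grad\phi(0)=0$ by Theorem~\ref{thm:diff-at-0}. For an arbitrary $\xi\in\R^2$, I would then test the necessary condition along the quadratic perturbation $\gamma_\xi(t):=tv_0+t^2\xi$ of the ray $t\mapsto tv_0$. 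One has $\gamma_\xi(0)=0$, $\gamma_\xi(t)\in\interior(K)\setminus\{0\}$ for $t>0$ small, and $\gamma_\xi(t)/\|\gamma_\xi(t)\|\to v_0/\|v_0\|$ as $t\to 0^+$, so applying Proposition~\ref{prop:necessary-condition-bcp} at $(\gamma_\xi(t),\phi(\gamma_\xi(t)))$ with vector $v_0$ and at $(-\gamma_\xi(t),-\phi(\gamma_\xi(t)))$ with vector $-v_0$ gives $g_{\gamma_\xi,v_0}(t)\geq 0$ and $h_{\gamma_\xi,v_0}(t)\geq 0$ for every $t>0$ small enough.

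The heart of the argument is a second-order expansion of these quantities. Using that $\grad\phi$ is differentiable on a neighborhood of $0$ and that $H\phi$ is differentiable at $0$ with derivative $L:=D(H\phi)(0)$, integration along the segment from $0$ to $w$ yields
\[
\grad\phi(w)=H\phi(0)w+\tfrac12 L[w]w+o(\|w\|^2)\quad\text{as }w\to 0.
\]
Substituting $w=\gamma_\xi(t)=tv_0+t^2\xi$, using $H\phi(0)v_0=0$, the symmetry of $H\phi(0)$ to cancel $\langle H\phi(0)\xi,v_0\rangle$, and the fact that $\omega(\gamma_\xi(t),v_0)=t^2\omega(\xi,v_0)$ (since $\omega(v_0,v_0)=0$), a direct computation gives
\begin{align*}
g_{\gamma_\xi,v_0}(t)&=\tfrac{t^2}{2}\bigl(-\langle L[v_0]v_0,v_0\rangle+\omega(\xi,v_0)\bigr)+o(t^2),\\
h_{\gamma_\xi,v_0}(t)&=\tfrac{t^2}{2}\bigl(-\langle L[v_0]v_0,v_0\rangle-\omega(\xi,v_0)\bigr)+o(t^2).
\end{align*}
Dividing by $t^2>0$, letting $t\downarrow 0$, and combining both inequalities, I would obtain $|\omega(\xi,v_0)|\leq-\langle L[v_0]v_0,v_0\rangle$ for every $\xi\in\R^2$.

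The final step is immediate: since $v_0\neq 0$, the linear functional $\xi\mapsto\omega(\xi,v_0)$ is surjective onto $\R$, so one may pick $\xi_0$ with $\omega(\xi_0,v_0)\neq 0$ and apply the bound to $\lambda\xi_0$ for $\lambda\to+\infty$, contradicting the fact that $-\langle L[v_0]v_0,v_0\rangle$ is a fixed real number. I expect the only delicate technical point to be the justification of the second-order Taylor expansion with a genuine $o(\|w\|^2)$ remainder, which is exactly what forces the hypothesis that $H\phi$ be differentiable at $0$ rather than merely continuous or Lipschitz; once this expansion is in hand, the scheme is a natural strengthening of the proof of Proposition~\ref{prop:nec-cdt-Hphi-1} obtained by replacing linear test curves with quadratic ones.
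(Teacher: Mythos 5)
Your proof is correct and follows essentially the same route as the paper's: both argue by contradiction from $H\phi(0)v_0=0$, perform a second-order expansion of the Prop.~\ref{prop:necessary-condition-bcp} quantity along a curve through $0$ with first derivative $v_0$, and extract a contradiction from the unboundedness of the linear form $\xi\mapsto\omega(\xi,v_0)$ (equivalently $w\mapsto\omega(w,v_0)$). The only cosmetic differences are that the paper works with an arbitrary smooth curve with prescribed $\gamma''(0)$ and computes $g''_{\gamma,v}(0)$ directly from the formula~\eqref{e:g'}, using only $g$ and varying $\gamma''(0)$ to get the contradiction, whereas you take the explicit quadratic curve $tv_0+t^2\xi$, expand $\grad\phi$ to second order, and use both $g$ and $h$; these are interchangeable.
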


\begin{proof}
We know from Proposition~\ref{prop:nec-cdt-Hphi-1} that $H\phi(0)$ is semi-definite negative. By contradiction assume that one can find $v\in\R^2 \setminus\{0\}$ such that $H\phi(0)v=0$. Arguing as in the proof of Proposition~\ref{prop:nec-cdt-Hphi-1}, let $w\in \R^2$ and let $\gamma: I \rightarrow \interior (K)$ be a smooth curve defined on some open interval $I\subset \R$ containing $0$ 
such that $\gamma(0) = 0$ and $\gamma'(0) = v$ and $\gamma''(0) = w$. We have $g_{\gamma,v}(0)=0$, $g'_{\gamma,v}(0)=0$ and $g_{\gamma,v}(t)\geq 0$ for all $t>0$ small enough with $g'_{\gamma,v}$ differentiable at $0$. This implies that $g''_{\gamma,v}(0)\geq 0$ with
\begin{equation*}
\begin{split}
g''_{\gamma,v}(0)& = -\langle \nabla \langle H\phi\, (v), v \rangle (0) , v \rangle - \langle H\phi(0)v,\gamma''(0) \rangle + \frac{1}{2} \omega(\gamma''(0),v)\\
& =  - \langle \nabla \langle H\phi\, (v), v \rangle (0) , v \rangle + \frac{1}{2} \omega(w,v).
\end{split}
\end{equation*}
Setting $C:= 2\langle \nabla \langle H\phi\, (v), v \rangle (0) , v \rangle$, it follows that 
\begin{equation*}
\omega(w,v) \geq C
\end{equation*}
for all $w\in \R^2$, and we get a contradiction since $v\not=0$.
\end{proof}


\end{document}